\numberwithin{equation}{section}
\newtheorem{theorem}{Theorem}[section]
\newtheorem{lemma}[theorem]{Lemma}
\theoremstyle{definition}
\newtheorem{definition}[theorem]{Definition}
\theoremstyle{remark}
\newtheorem{remark}{Remark}[section]
\theoremstyle{remark}
\theoremstyle{remark}
\theoremstyle{remark}
\theoremstyle{remark}
\DeclareMathOperator*\lowlim{\underline{lim}}
\DeclareMathOperator*\uplim{\overline{lim}}
\def\comp{\ensuremath\mathop{\scalebox{.6}{$\circ$}}}
\begin{document}

\title{The Penrose Inequality for metrics with singular sets}
\date{\today}

\author{Huaiyu Zhang}
\email{zhymath@outlook.com}
\address{School of Mathematics and Statistics, Nanjing University of Science and Technology}

\begin{abstract}
We study the Penrose inequality and its rigidity for metrics with singular sets. Our result could be viewed as a complement of Theorem 1.1 of Lu and Miao ( J. Funct. Anal. 281, 2021) and Theorem 1.2 of Shi, Wang and Yu (Math. Z. 291, 2019), in which they assume the singular set is a hypersurface and assume an additional condition on the mean curvature. As a complement, this paper study the case of singular set of dimensional less than $n-1$, without any additional conditions.

\vspace{10pt}
\textbf{Keywords} Penrose inequality, singular metric, positive mass theorem \rm

\vspace{10pt}
\textbf{Mathematics Subject Classification} Primary 53C20; Secondary 83C99
\end{abstract}

\maketitle
\section{itroduction}
The positive mass theorem and the Penrose inequality (in Riemannin case) are two of the most celebrated theorems in Riemannian geometry with nonnegative scalar curvature. The positive mass theorem compares the ADM mass of an asymptotically flat manifold with that of the Euclidean space, which was proved by Schoen-Yau in \cite{SY79,ScYa2} and by Witten in \cite{Witten},  while the Riemannian Penrose inequality compares the ADM mass of an asymptotically flat manifold with that of the spatial Schwarzschild manifold, which was proved by Huisken-Ilmanen in \cite{HI01}, and by Bray in \cite{Br}. Later it was generalized to manifolds of dimensional less than $8$ by Bray-Lee in \cite{BL09}. See also the book \cite{Le19} for a good introduction to this two classical theorems.

After that, many researchers have been considering whether these theorems still hold for metrics with singular sets while the scalar curvature is only assumed to be nonnegative pointwisely away from the singular set. On the one hand, Miao, Shi-Tam and McFeron-Sz\'ekelyhidi proved a positive mass theorem for Lipschitz metrics with singularity along a hypersurface in  \cite{Miao2002},  \cite{ST02} and \cite{McSz}, independently by three different approaches. In this case, a condition on the mean curvature along the hypersurface is assumed, and there exists counterexamples without this condition. Later, in \cite{Le13}, Lee proved a positive mass theorem for Lipschitz metrics with small singular sets and proposed a conjecture, which was confirmed by Lee-LeFloch in \cite{Le13a} for the spin case and by Jiang, Sheng and the author in \cite{JSZ} for the general case. For more results about the positive mass theorem for metrics admitting small singular set, see \cite{Le13a}, \cite{LiMa}, \cite{ShiTam}, \cite{CL22}, \cite{LT21}, etc.

 On the other hand, McCormick-Miao proved the inequality part of the (Riemannian) Penrose inequality for Lipschitz metrics with singularity along a hypersurface in \cite{StMi}. Later, the rigidity part was proved by Shi-Wang-Yu in \cite{SWY19} and by Lu-Miao in \cite{LuMi} independently by different approaches. Similarly, an indispensable condition on the mean curvature along the hypersurface is assumed again in these results, otherwise counterexpamles would appear. It is natural to ask  a (Riemannian) Penrose inequality for metrics with small singular sets, without such an additional condition.

In this paper, we consider this problem and our main theorem is:

\begin{theorem}\label{mthm}
Let $M^n(3\le n\le 7)$ be a smooth manifold with a boundary $\Sigma_H$  and $g$ be a Lipschitz metric on $M$ which is smooth away from a compact singular set $S\subset M\setminus \Sigma_H$. Suppose $\Sigma_H$  is minimal and strictly outer-minimizing, $g$ is asymptotically flat and has nonnegative scalar curvature away from $S$. If the $(n-1)$-dimensional lower Minkowski content of $S$ is zero. Then we have 
\begin{align*}
m(g)\ge \frac{1}{2}\left(\frac{|\Sigma_H|}{\omega_{n-1}}\right)^\frac{n-2}{n-1},
\end{align*}
where $m(g)$ is the ADM mass of $g$, $|\Sigma_H|$ is the area of $\Sigma_H$, $\omega_{n-1}$ is the area of the standard round $n-1$-dimensional sphere.

Moreover, if $m(g)= \frac{1}{2}\left(\frac{|\Sigma_H|}{\omega_{n-1}}\right)^\frac{n-2}{n-1}$, then there exists a $C^{1,\alpha}$ diffeomorphism $\Phi:\mathbb{M}_m \to M$, such that $g_m=\Phi^* g$, where $\alpha\in(0,1)$ and $(\mathbb{M}_m,g_m)$ is the standard spatial Schwarzschild manifold with mass $m$.
\end{theorem}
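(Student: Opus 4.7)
The plan is to approximate $g$ by smooth metrics with nonnegative scalar curvature, apply the classical Riemannian Penrose inequality of Bray--Lee \cite{BL09} to the approximations, and pass to the limit. This follows the template used in the author's earlier joint work \cite{JSZ} for the positive mass theorem with small singular sets; the new ingredient relative to that work is the minimal outer-minimizing boundary $\Sigma_H$, which must be controlled under the smoothing.

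First I would choose shrinking tubular neighborhoods $U_\epsilon(S)$ of the singular set and, using the hypothesis that the $(n-1)$-dimensional lower Minkowski content of $S$ vanishes (so $|U_\epsilon(S)|/\epsilon\to 0$), produce smooth metrics $g_\epsilon$ by mollifying $g$ on $U_\epsilon(S)$ and leaving it unchanged outside. The Lipschitz bound on $g$ combined with the volume estimate on $U_\epsilon$ then gives $g_\epsilon\to g$ in $C^0_{\rm loc}$, $g_\epsilon\equiv g$ away from a shrinking neighborhood of $S$, and the negative part of $R(g_\epsilon)$ is small in the norm natural to the conformal Laplacian on asymptotically flat manifolds.

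Next I would solve the conformal Laplace equation
\begin{equation*}
-\tfrac{4(n-1)}{n-2}\,\Delta_{g_\epsilon} u_\epsilon + R(g_\epsilon)\,u_\epsilon = 0, \qquad u_\epsilon\to 1 \text{ at infinity},
\end{equation*}
obtaining a positive solution with $u_\epsilon\to 1$ in a suitable weighted Sobolev topology, and set $\tilde g_\epsilon := u_\epsilon^{4/(n-2)}g_\epsilon$. Then $\tilde g_\epsilon$ is smooth, asymptotically flat, and has $R(\tilde g_\epsilon)\equiv 0$; the standard mass formula together with the decay of $u_\epsilon-1$ at infinity gives $m(\tilde g_\epsilon)\to m(g)$. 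Since $u_\epsilon\to 1$ uniformly near $\Sigma_H$ (which is far from $S$) and $\Sigma_H$ is strictly outer-minimizing in $(M,g)$, for small $\epsilon$ it remains outer-minimizing and nearly minimal in $(M,\tilde g_\epsilon)$, and the outermost minimal area enclosure $\tilde\Sigma^\epsilon$ of $\Sigma_H$ in $\tilde g_\epsilon$ satisfies $|\tilde\Sigma^\epsilon|_{\tilde g_\epsilon}\to |\Sigma_H|_g$. Applying Bray--Lee \cite{BL09} to each $\tilde g_\epsilon$ in the admissible dimension range $3\le n\le 7$ and sending $\epsilon\to 0$ yields the desired inequality.

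For rigidity, assume $m(g)=\tfrac{1}{2}(|\Sigma_H|/\omega_{n-1})^{(n-2)/(n-1)}$. Then the Bray--Lee inequality for $\tilde g_\epsilon$ becomes asymptotically sharp, and a compactness/stability argument invoking the rigidity part of Bray--Lee on the smooth side produces harmonic coordinates in which $g$ is uniformly close to the spatial Schwarzschild metric $g_m$ away from $S$; the conformal PDE together with the smallness of $S$ then lets one extend the identification across the singular set by an elliptic removable singularity argument, yielding the claimed $C^{1,\alpha}$ diffeomorphism $\Phi:\mathbb{M}_m\to M$ with $\Phi^*g=g_m$. The hardest step is precisely this rigidity: the approximations never saturate equality, so one needs a quantitative stability of the Bray--Lee rigidity, control over how the outermost minimal surfaces move with $\epsilon$, and an argument that the resulting isometric identification extends through $S$ with the stated regularity.
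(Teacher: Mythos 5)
Your proposal takes a genuinely different route from the paper, but it has two gaps, the first of which is fatal under the stated hypothesis.

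\textbf{Gap 1: the conformal deformation cannot be carried out with only $L^1$ control of $(R_{g_\epsilon})_-$.} Under the hypothesis that $S$ has vanishing $(n-1)$-dimensional lower Minkowski content, the mollification (Lemma~\ref{thm3.1}) only delivers $\int_M (R_{g_\epsilon})_- \,d\mu \to 0$ in $L^1$; pointwise, $R_{g_\epsilon}$ blows up like $\epsilon^{-1-n/p}$ on $S_\epsilon$. To produce a positive solution $u_\epsilon>0$ of the conformal Laplace equation with $u_\epsilon\to 1$ at infinity, one needs the conformal Laplacian $-\frac{4(n-1)}{n-2}\Delta_{g_\epsilon}+R(g_\epsilon)$ to be positive (no negative eigenvalue on the weighted space); by Sobolev embedding this is controlled by $\|(R_{g_\epsilon})_-\|_{L^{n/2}}$, not $\|(R_{g_\epsilon})_-\|_{L^1}$. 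This is exactly why Lee's original result \cite{Le13} required the stronger Minkowski-content hypothesis that forces $\int (R_{g_\epsilon})_-^{n/2} d\mu \to 0$, and why the present paper deliberately switches to the Ricci ($h$-)flow route of \cite{McSz,ShiTam}: under the flow, $\partial_t R \ge \Delta R$ makes $\int R_-\,d\mu$ monotone, so $L^1$ smallness of the initial negative scalar curvature suffices. Your conformal deformation step thus quietly requires a strictly stronger hypothesis on $S$ than the theorem assumes. (There is also a secondary issue: solving the conformal Laplacian on $M$ with boundary requires a boundary condition on $\Sigma_H$ to preserve minimality, e.g.\ conformal Neumann; the paper avoids this by doubling across $\Sigma_H$ and running a reflection-symmetric flow so $\Sigma_H$ stays totally geodesic.)

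\textbf{Gap 2: the rigidity argument is a placeholder, and no ``quantitative stability of Bray--Lee rigidity'' is available.} The approximating metrics $\tilde g_\epsilon$ never attain equality, so one would need a stability version of the Bray--Lee rigidity strong enough to conclude convergence to Schwarzschild with $C^{1,\alpha}$ control, plus an argument to remove the singularity. Such a stability theorem is not in the literature and would be a substantial new result. The paper's rigidity is obtained instead by proving a singular mass-capacity inequality (Lemma~\ref{MCInq}): double $(M,g)$, pass to Bray's conformal metric $\tilde g=\left(\frac{\bar\varphi+1}{2}\right)^{4/(n-2)}\bar g$, one-point-compactify the second end, and apply a new singular positive mass theorem (Theorem~\ref{PMT}) that simultaneously handles the hypersurface corner along $\Sigma_H$, the small singular set $S\cup S'$, and the added point $\{o\}$; its rigidity produces a $C^{1,\alpha}$ identification with Euclidean space via Taylor's theorem on regularity of isometries \cite{Ta06}, after which the conformal factor is solved explicitly and $g$ is shown to be Schwarzschild. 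Proving Theorem~\ref{PMT} --- in particular the mixed-singularity case and the $C^{1,\alpha}$ rigidity --- is the technical core of the paper, and your sketch does not engage with it.
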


\begin{remark}
The condition that $S$ is of vanishing $(n-1)$-dimensional lower Minkowski content is optimal in some sense. For the definition of the lower Minkowski content, see Definition \ref{Mc}. For now, recall for well behaved sets the Minkowski content equals Hausdorff measure. Moreover, if $S$ is a submanifold, then $S$ is of vanishing $(n-1)$-dimensional lower Minkowski content if and only if $S$ is of dimensional less than $n-1$. 

In fact, if $S$ is of dimensional $n-1$, one can construct counterexamples for which the ADM mass is negative. See also \cite{StMi}, \cite{SWY19} and \cite{LuMi}, in which $S$ is assumed to be a hypersurface, but an additional condition on its mean curvature is required. 

\cite{Le13a}, \cite{ShiTam}, \cite{LT21}, \cite{CL22}, etc. also use the Minkowski content to give  dimensional condition for the singular set.
\end{remark}
 
\begin{remark}
In \cite{Le13}, Lee proposed a conjecture that the positive mass theorem holds for Lipschitz metrics with  singular sets of dimensional less than $n-1$, which was proved by Lee-LeFloch in \cite{Le13a} for the spin case and by Jiang, Sheng and the author in \cite{JSZ} for the general case. Theorem \ref{mthm} affirms the Penrose inequality version of this conjecture.
\end{remark}

In \cite{LuMi}'s proof using Bray's conformal metrics flow method, a key step is to solve the conformal factor in their equation (2.3):
\begin{align}
\tilde g=\left(\frac{\bar \varphi+1}{2}\right)^\frac{4}{n-2} g,
\end{align}
where $\bar \varphi$ is weakly $g$-harmonic.

By proving  that $\tilde g$ is Euclidean and the conformal factor must be the function $\left(1+\frac{m}{2|x|^{n-2}}\right)^\frac{4}{2-n}$, they showed $g$ must be the Schwarzschild metric. In Bray's proof in \cite{Br}, he proved that $\tilde g$ is Euclidean by using the classical positive mass theorem. And his $g$ was assumed to be smooth, thus the equation  was solved quiet straightforwardly. Along the same idea, in \cite{LuMi}'s proof, Lu-Miao found a smart approach to  improve the results in \cite{Miao2002},  \cite{ST02} and \cite{McSz}, which enabled them to say that $\tilde g$ is Euclidean. They also proved that $g$ is smooth, then the equation could be solved. However, Lu-Miao's approach depends on an analysis of the second fundamental form of the corners and it seems that this approach does not work in our case. 

To overcome this difficulty, we prove a new singular positive mass theorem, which enables us to say that $\tilde g$ could be pullback to be Euclidean by a $C^{1,\alpha}$ diffeomorphism, for some $\alpha\in(0,1)$. Then we are able to solve the conformal factor under this pullback.

\textbf{Organization:} In Section 2, we prove a new singular positive mass theorem (Theorem \ref{PMT}), since we need a singular positive mass theorem delicate enough as  Theorem \ref{PMT} to prove our main theorem. In Section 3, we combine the approach in \cite[Proposition 3.1]{StMi} and our results in Section 2  to prove the inequality part of Theorem \ref{mthm}. In Section 4, we prove our main theorem, of which the key step is to prove Bray's mass-capacity inequality for metrics with singular sets.

\section{A singular positive mass theorem}
In this section, we prove a singualar positve mass theorem, which will play a role in the next two sections.

\begin{theorem}\label{PMT}
Let $M^n(n\ge3)$ be a smooth manifold with a metric $g\in C^\infty(M\setminus S)$, where $S$ is a subset of $M$ and $S=\cup_{k=0}^{k_0} S^k$, each $S^k(0\le k\le {k_0})$ is compact and $S^k\cap S^l=\emptyset$ for $k\neq l$, such that 
\begin{enumerate}
\item $S^0$ is a hypersurface which bounds a domain $\Omega$, $g$ is Lipschitz in a neighborhood of $S^0$, and the mean curvature of $S^0$ in $(\bar\Omega,g|_\Omega)$ is not less than  it in $(M\setminus \Omega,g|_{M\setminus \Omega})$.

\item For each $1\le \iota\le k_0$,  $g$ is $W^{1,p_\iota}$ in a neighborhood of $S^\iota$ for some $p_\iota \in (n,+\infty]$, and the $(n-1-n/{p_\iota})$-dimensional lower Minkowski content of $S^\iota$ is zero.
\end{enumerate}

Suppose $g$ is complete and asymptotically flat, then its ADM mass is nonnegative. Moreoever, if the mass is zero, then there exists a $C^{1,\alpha}$ diffeomorphism $\Phi:\mathbb{R}^n \to M$, such that $g_{\text{Euc}}=\Phi^* g$, where $g_{\text{Euc}}$ is the standard Euclidean metric.
\end{theorem}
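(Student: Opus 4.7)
The plan is to reduce Theorem \ref{PMT} to two previously established singular positive mass theorems via a sequential smoothing scheme. First I would handle the hypersurface piece $S^0$ by applying the corner mollification of Miao \cite{Miao2002} (see also \cite{ST02,McSz}): on a small tubular neighborhood $U_0$ of $S^0$ on which $g$ is Lipschitz, I produce a family of metrics $g_\delta$ that agree with $g$ outside $U_0$, converge to $g$ in $C^0$ as $\delta\to 0$, are smooth in a neighborhood of $S^0$, and satisfy $m(g_\delta)\to m(g)$. The mean-curvature hypothesis on $S^0$ is used precisely here: it guarantees that the distributional scalar-curvature mass concentrated at the corner is nonnegative in the limit, so that after a subsequent conformal change $\tilde g_\delta=u_\delta^{4/(n-2)}g_\delta$, with $u_\delta\to 1$ solving the Yamabe-type equation
\begin{equation*}
-\tfrac{4(n-1)}{n-2}\Delta_{g_\delta}u_\delta + R(g_\delta)\,u_\delta = 0,
\end{equation*}
one obtains a smooth (across $S^0$) metric $\tilde g_\delta$ with $R(\tilde g_\delta)\ge 0$ pointwise and $m(\tilde g_\delta)\to m(g)$.

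After this first step, $\tilde g_\delta$ coincides with $g$ outside $U_0$, so its singular set is contained in $S^1\cup\cdots\cup S^{k_0}$ and inherits from $g$ the $W^{1,p_\iota}$ regularity and the vanishing $(n-1-n/p_\iota)$-dimensional lower Minkowski content. This puts us exactly in the setting of the singular positive mass theorem of Jiang--Sheng--Zhang \cite{JSZ}, which applies to give $m(\tilde g_\delta)\ge 0$. Letting $\delta\to 0$ yields $m(g)\ge 0$.

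For the rigidity statement, suppose $m(g)=0$. Then $m(\tilde g_\delta)\to 0$, so every inequality in the chain must saturate in the limit: the scalar-curvature contribution concentrated at $S^0$ must vanish, which forces the inner and outer mean curvatures of $S^0$ in $(M,g)$ to coincide, and the conformal factors $u_\delta$ must tend to $1$. Moreover, the rigidity part of \cite{JSZ} produces, for each $\delta$, a $C^{1,\alpha}$ identification of $\tilde g_\delta$ with a Euclidean structure that improves in $\delta$, so a diagonal subsequence yields a $C^{1,\alpha}$ diffeomorphism from $\mathbb{R}^n$ to $M\setminus S^0$ pulling back $g$ to $g_{\rm Euc}$.

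The main obstacle is to promote this to a single global $C^{1,\alpha}$ diffeomorphism $\Phi:\mathbb{R}^n\to M$, i.e.\ to extend the Euclidean identification across the hypersurface $S^0$. I expect to handle this by passing to $g$-harmonic coordinates on a bilateral neighborhood of $S^0$: the Lipschitz regularity of $g$ combined with the equality of mean curvatures (an upgraded Calder\'on--Zygmund-type argument as in \cite{Miao2002,ST02}) improves the metric coefficients to $W^{2,p}$, and hence $C^{1,\alpha}$ by Sobolev embedding, across $S^0$. Ricci-flatness of $g$ on both sides of $S^0$ together with unique continuation for the Ricci equation in harmonic coordinates then allows the two flat coordinate patches produced on either side of $S^0$ to be glued into a single global $C^{1,\alpha}$ isometric identification with $(\mathbb{R}^n,g_{\rm Euc})$.
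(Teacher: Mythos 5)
Your approach is genuinely different from the paper's (the paper uses the $h$-flow/Ricci flow smoothing of McFeron--Sz\'ekelyhidi and Shi--Tam rather than Miao-type corner mollification plus a conformal Yamabe change), but there are two concrete gaps, the second of which is fatal to the rigidity claim.

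\textbf{The conformal step is not well-posed in this generality.} Your Yamabe-type equation $-\tfrac{4(n-1)}{n-2}\Delta_{g_\delta}u_\delta+R(g_\delta)u_\delta=0$ is supposed to be solved globally on $M$, but after the corner mollification $g_\delta$ is still only $W^{1,p_\iota}$ near each $S^\iota$ with $\iota\ge 1$, so $R(g_\delta)$ is at best a distribution of negative order there, not a pointwise function, and you cannot invoke the $L^{n/2}$ smallness of $R_-$ that Miao's argument relies on. You would at minimum need a localized conformal change supported near $S^0$ (with a careful cutoff analysis), which is not what you wrote and is nontrivial to arrange while preserving the asymptotic mass formula. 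The paper sidesteps this completely by mollifying near every singular stratum at once (Lemma \ref{thm3.1}) and flowing: the $h$-flow instantaneously produces smooth metrics $g(t)$ with nonnegative scalar curvature (Lemma \ref{NNSC}) and non-increasing mass (Lemma \ref{ST1} together with \cite[Theorem 14]{McSz}).

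\textbf{The rigidity argument does not work.} You write that ``the rigidity part of \cite{JSZ} produces, for each $\delta$, a $C^{1,\alpha}$ identification of $\tilde g_\delta$ with a Euclidean structure that improves in $\delta$.'' This is incorrect: for each fixed $\delta>0$ the metric $\tilde g_\delta$ has strictly positive ADM mass, so it is not flat and the rigidity statement of \cite{JSZ} simply does not apply to it; there is no $C^{1,\alpha}$ Euclidean identification to extract, let alone a family of them improving in $\delta$. What you would actually need is a quantitative almost-rigidity (``small mass $\Rightarrow$ Gromov--Hausdorff close to Euclidean with regularity'') result at the level of $W^{1,p}$ metrics, which is not available. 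The paper's remark following Theorem \ref{PMT} explicitly records that \cite{JSZ} gives the inequality but \emph{not} the rigidity, and this is precisely why a new argument is needed. The paper resolves rigidity by applying the \emph{classical} smooth positive mass theorem to $g(t)$, which for $m(\hat g)=0$ has zero mass and is genuinely smooth, so $(M,g(t))$ is Euclidean for every $t\in(0,T]$; letting $t\to 0$ gives a pointed Gromov--Hausdorff limit identifying $(M,\hat g)$ with Euclidean space as a metric space, and the $C^{1,\alpha}$ smoothness of the resulting bijection is recovered from Taylor's regularity theorem for distance-preserving maps between $C^\alpha$ metrics \cite{Ta06}. Your final paragraph (harmonic coordinates and unique continuation across $S^0$) is an attempt to patch a separate difficulty, but it never gets to run because the rigidity input it needs is missing. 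I would suggest replacing the conformal mollification plan for rigidity with a Ricci-flow-type argument, or at least identifying an explicit almost-rigidity result if you want to pursue the conformal route.
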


\begin{remark}
Particularly, in the condition (2), if $g$ is Lipschitz in a neighborhood of $S^\iota$, then $p_\iota=+\infty$, and $S^\iota$ is assumed to have $(n-1)$-dimensional lower Minkowski content vanishing. This dimensional condition of the singular sets is optimal in some sense, Since if $S^\iota$ is a hypersurface, without the mean curvature condition, then there is a counterexample for which the mass may be negative.
\end{remark}

\begin{remark}
Theorem \ref{PMT} can be considered as an extension of  \cite{Miao2002}, \cite{ST02}, \cite{McSz}, \cite{ShiTam} and \cite{Le13}. In fact, if $S=S^0$, then Theorem \ref{PMT} is reduced to the case in \cite{Miao2002}, \cite{ST02} and \cite{McSz}; if $S=\cup_{\iota=1}^m S^\iota$, then Theorem \ref{PMT} is reduced to the case in \cite{ShiTam} and \cite{Le13a} (with a refinement on the dimensional condition on the singular sets). However, the results in these references and their combinations are not delicate enough for our use in the next two sections. Thus we must prove Theorem \ref{PMT}.
\end{remark}

\begin{remark}
One could compare Theorem \ref{PMT} with \cite[Theorem 1.1]{JSZ}, which is a previous work by Jiang, Sheng and the author. In fact, though not been explicity claimed there, the argument in \cite{JSZ} actually gives the inequality part of Theorem \ref{PMT}. It is because one can check that $g$ has nonnegative scalar curvature in distribuional sense under the assumption of Theorem \ref{PMT} (by combining \cite[Proposition 5.1]{Le13a} and \cite[Lemma 2.7]{JSZ}). However, the argument in \cite{JSZ} does not give the rigidity part of Theorem \ref{PMT}. In this section, therefore, we will prove Theorem \ref{PMT} by using the Ricci flow method in \cite{McSz} and \cite{ShiTam}.
\end{remark}

Throughout this paper, we fix an asymptotically flat coordinate, and let $h$ be a smooth background metric  which is also asymptotically flat. We let $\tilde \nabla$ and $|\cdot|$ denote the Levi-Civita connection and the norm taken with respect to $h$ throughout this paper. We also let $B^h_r(x)$ denote the geodesic ball centered at $x$ with radius $r$ taken with respect to $h$. 

It is standard to define the Lebesgue space on a measurable subset $A$ for a tensor field $T$ as:
\begin{align}
\|T\|_{L^p(M)}:=\left(\int_A |T|^p d\mu_h\right)^\frac{1}{p},
\end{align}
where $d\mu_h$ is the volume form taken with respect to $h$.

Then the local Sobolev space $W^{k,p}_{\text{loc}}$, for the tensor fields of type $(s,t)$  which have weak derivative up to $k$th order on $M$, is defined as:
\begin{align}
W^{k,p}_{\text{loc}}:=\{T\in T^s_t(M): \exists A>0, {\text{such that}} \ \ \|\tilde\nabla ^l T\|_{L^p(B^h_1(x))}<A, \forall 1\le l\le k, x\in M\}
\end{align}

This definition does not depend upon the choice of the background metric $h$.

Let $(M,g)$ be assumed as in Theorem \ref{PMT} and denote $p=\inf_{1\le \iota\le k_0} p_\iota$. Then we have $g\in W^{1,p}_{\text{loc}}$ and $p\in(n,+\infty]$. 

\textbf{Organization of this section:} In Subsection 2.1, we construct smooth metrics approximating the singular metric such that the negative part of their scalar curvature converge to zero in $L^1$ sense.
 In Subsection 2.2, we recall the definitions of the Ricci flow and the $h$-flow, and we obtain some estimates for them. In Subsection 2.3, we check  scalar curvature along the flow. In Subsection 2.4, we prove Theorem \ref{PMT}.

\subsection{Approximation of the singular metric}

In this subsection we prove the following lemma:

\begin{lemma}\label{thm3.1}
Let $(M,g)$ be assumed as in Theorem \ref{PMT}, then there exists a family of smooth metrics $g_\epsilon$ such that\\
 (1) $g_\epsilon=g$ on $M\setminus S_{2\epsilon}$, wehre $S_{2\epsilon}$ is the $2\epsilon$-neighborhood of $S$.\\
 (2) $g_\epsilon\to g$  on $M$ in $C^0$ norm as $\epsilon\to 0^+$ .\\
 (3) there exists an $A=A(g)>0$ which does not depend on $\epsilon$ and $x$, such that $\int_{B^h_1(x)}|\tilde\nabla g_\epsilon|^pd\mu_h<A$, where $p=\inf_{1\le \iota\le k_0} p_\iota$ and $B^h_1(x)$ is the geodesic ball of $h$.\\
 (4) Moreover, if $g$ has nonnegative scalar curvature, then \[\lowlim_{\epsilon\to0^+}\int_M (R_{g_\epsilon})_-d\mu_h= 0,\] where $(R_{g_\epsilon})_-$ is the negative part of the scalar curvature of $g_\epsilon$.
\end{lemma}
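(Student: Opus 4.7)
My plan is to construct $g_\epsilon$ by a localized smoothing adapted to each piece $S^k$. Since the pieces are pairwise disjoint and compact, their $2\epsilon$-neighborhoods are disjoint for $\epsilon$ small, so I can smooth near each piece independently and patch via a partition of unity tied to $\epsilon$, leaving $g$ unchanged outside. This makes (1) automatic, and (2)--(4) reduce to estimates near each $S^k$ separately.

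Near $S^0$ I take $g_\epsilon^{(0)}$ to be Miao's mollification from \cite{Miao2002}: parametrize a tubular neighborhood of $S^0$ via signed distance, and convolve the metric components in the transverse coordinate with a one-dimensional mollifier at scale $\epsilon$. The resulting $g_\epsilon^{(0)}$ is smooth, agrees with $g$ outside $S^0_{2\epsilon}$, converges uniformly to the Lipschitz metric $g$, has $L^\infty$ gradient bounded independently of $\epsilon$, and under the mean-curvature inequality assumed in Theorem \ref{PMT}(1) satisfies $\int_M (R_{g_\epsilon^{(0)}})_-\,d\mu_h\to 0$. This is Miao's original computation, and it delivers (1)--(4) in a neighborhood of $S^0$.

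Near each $S^\iota$, $1\le \iota\le k_0$, I take $g_\epsilon^{(\iota)}=g*\rho_\epsilon$ componentwise in a finite atlas covering $S^\iota$, with $\rho_\epsilon$ a standard mollifier at scale $\epsilon$. Conditions (1)--(2) follow from the Sobolev embedding $W^{1,p_\iota}\hookrightarrow C^{0,1-n/p_\iota}$ (valid since $p_\iota>n$) together with standard mollifier approximation, and (3) follows from Young's inequality $\|\tilde\nabla(g*\rho_\epsilon)\|_{L^{p_\iota}}\le\|\tilde\nabla g\|_{L^{p_\iota}}$. For (4), inside $S^\iota_{2\epsilon}$ the scalar curvature satisfies
\begin{align*}
|R_{g_\epsilon^{(\iota)}}|\lesssim |\tilde\nabla^2 g_\epsilon^{(\iota)}| + |\tilde\nabla g_\epsilon^{(\iota)}|^2 + O(1),
\end{align*}
with the mollifier bound $\|\tilde\nabla^2 g_\epsilon^{(\iota)}\|_{L^{p_\iota}}\lesssim \epsilon^{-1}\|\tilde\nabla g\|_{L^{p_\iota}}$. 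H\"older's inequality combined with the vanishing $(n-1-n/p_\iota)$-dimensional lower Minkowski content, which yields $|S^\iota_{2\epsilon_j}|=o(\epsilon_j^{1+n/p_\iota})$ along some sequence $\epsilon_j\to 0^+$, then gives
\begin{align*}
\int_{S^\iota_{2\epsilon_j}}|R_{g_{\epsilon_j}^{(\iota)}}|\,d\mu_h
\;\lesssim\; \epsilon_j^{-1}|S^\iota_{2\epsilon_j}|^{1-1/p_\iota} + |S^\iota_{2\epsilon_j}|^{1-2/p_\iota} + |S^\iota_{2\epsilon_j}|
\;=\; o(\epsilon_j^{(n-1-n/p_\iota)/p_\iota}) + o(\epsilon_j^{1+(n-2)/p_\iota-2n/p_\iota^2}) + o(\epsilon_j^{1+n/p_\iota}),
\end{align*}
and all three exponents are strictly positive for $p_\iota>n\ge 3$; hence the entire integral (not merely the negative part) vanishes along $\epsilon_j$.

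The principal obstacle is this curvature integrability estimate: the $\epsilon^{-1}$ amplification of second derivatives by the mollifier is exactly overwhelmed by the Minkowski-content dimension $n-1-n/p_\iota$ sharply matched to the Sobolev exponent $p_\iota$, which is why the dimensional hypothesis on $S^\iota$ must be posed in this precise form. A secondary technical point is the patching: the cutoff gradient $\tilde\nabla\chi_k^\epsilon\sim\epsilon^{-1}$ produces cross terms of the form $\epsilon^{-1}|g_\epsilon^{(k)}-g|$ in the annulus $S^k_{2\epsilon}\setminus S^k_\epsilon$, which I control either by choosing the mollification scale $\delta\ll \epsilon$ so that $|g_\delta-g|\lesssim \delta^{1-n/p_\iota}$ dominates the cutoff factor, or by a more delicate gluing in the spirit of \cite{Le13,ShiTam}; in either case the same Minkowski-content bound absorbs the resulting contribution to $\int(R_{g_\epsilon})_-$.
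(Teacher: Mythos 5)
Your construction near each $S^\iota$ takes a genuinely different route from the paper's. The paper uses Lee's variable-scale mollification
$(g_\epsilon)_{ij}(x)=\int g_{ij}(x-\sigma(x)y)\,\phi(y)\,dy$,
where $\sigma$ is a smooth cutoff equal to $\epsilon$ on $S_\epsilon$, equal to $0$ outside $S_{2\epsilon}$, with $|\tilde\nabla\sigma|\le C$ and $|\tilde\nabla^2\sigma|\le C/\epsilon$. Because the mollification radius itself vanishes outside $S_{2\epsilon}$, the smoothed metric agrees with $g$ there automatically: there is no multiplicative cutoff in the construction, hence no cutoff-generated cross terms anywhere. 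You instead mollify uniformly ($g*\rho_\epsilon$, or $g*\rho_\delta$) and then patch with a cutoff $\chi^\epsilon_k$. That is a legitimate alternative strategy, and your exponent bookkeeping on the interior region (where $\chi\equiv 1$) is correct and, after the obvious translation between the paper's pointwise bound $|\tilde\nabla^2 g_\epsilon|\le C\epsilon^{-1-n/p}$ on $S_\epsilon$ and your H\"older estimate against $\|\tilde\nabla^2 g_\epsilon\|_{L^p}\lesssim\epsilon^{-1}\|\tilde\nabla g\|_{L^p}$, gives the same positive exponents.

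The gap is in the patching itself, which you flag as ``secondary'' but which is really the crux of the difference. On the annulus $S^\iota_{2\epsilon}\setminus S^\iota_\epsilon$ the scalar curvature of $g_\epsilon=\chi g_\delta+(1-\chi)g$ has cross terms not only of the form $\epsilon^{-1}|g_\delta-g|$ (the only one you write down) but also $\epsilon^{-2}|g_\delta-g|$ coming from $\tilde\nabla^2\chi$, $\epsilon^{-1}|\tilde\nabla(g_\delta-g)|$ from $\tilde\nabla\chi\otimes\tilde\nabla(g_\delta-g)$, and $\epsilon^{-2}|g_\delta-g|^2$ from $|\tilde\nabla\chi|^2$; each must be integrated over the annulus and shown to vanish. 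Moreover the two-scale remedy ``$\delta\ll\epsilon$'' is in tension with your main estimate: with mollification at scale $\delta$ the second-derivative bound becomes $\|\tilde\nabla^2 g_\delta\|_{L^p}\lesssim\delta^{-1}\|\tilde\nabla g\|_{L^p}$, so your leading term is $\delta^{-1}|S^\iota_{2\epsilon}|^{1-1/p}=\delta^{-1}\,o\big(\epsilon^{(1+n/p)(1-1/p)}\big)$, which vanishes only if $\delta=\epsilon^\beta$ with $1<\beta<(1+n/p)(1-1/p)$ --- a nonempty but narrow window (for $n=3$, $p=4$ it is $1<\beta<21/16$), not an arbitrary $\delta\ll\epsilon$. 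The paper's $\sigma$-mollification sidesteps all of this: there is only one scale, the annulus contributes only through the shrinking measure $\mu_g(S_{2\epsilon})$, and no coupling between a gluing scale and a mollification scale ever appears. If you want to carry your version through, you need to fix $\beta$ in the admissible range and estimate all four cross terms; otherwise I would switch to the variable-scale construction of \cite{Le13}, which is what the paper does.
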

\begin{remark}
The construction is actually a combination of the constructions in \cite{Miao2002} and in \cite{Le13}. In fact, 
Lee  has proved $\lim_{\epsilon\to0^+}\int_M (R_{g_\epsilon})_-^{\frac{n}{2}}d\mu_h = 0$ in \cite{Le13} while assuming a stronger condition on the Minkowski content. However, we will check that $\lim_{\epsilon\to0^+}\int_M (R_{g_\epsilon})_-d\mu_h = 0$ in our case, which is good enough for our use.
\end{remark}

Let us explain the concept of the lower Minkowski content here:
\begin{definition}\label{Mc}
For a subset $S\in M$, given a $C^0$  metric $g$, the $m$-dimensional lower Minkowski content of $S$ is 
\begin{align*}
\lowlim_{\epsilon\to 0^+}\frac{\mu_g(S_\epsilon)}{\omega_{n-m}\epsilon^{n-m}},
\end{align*}
where $\mu_g$ is the volume form of $g$, $S_\epsilon$ is the $\epsilon$-neighborhood of $S$ and $\omega_{n-m}$ is the volume of the unit ball in $\mathbb{R}^{n-m}$.
\end{definition}

Firstly, we consider a particular case in which the singular set is of only one type. We have the following lemma:
\begin{lemma}
Let $(M,g)$ be assumed as in Theorem \ref{PMT}. Suppose that $S=S^1$, in other words, $S^0=\emptyset$ and $k_0=1$, $p=p_1$, then we con construct a family of metrics satisfying the conclusions of Theorem \ref{PMT}
\end{lemma}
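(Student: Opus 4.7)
The plan is to construct $g_\epsilon$ by a cutoff-mollification scheme with the mollification scale equal to the cutoff scale, extending the approaches of \cite{Miao2002,Le13} to produce a family satisfying conclusions (1)--(4) of Lemma \ref{thm3.1}. In the asymptotically flat coordinates, let $\eta_\epsilon$ be a standard smooth mollifier at scale $\epsilon$ and put $g_\epsilon^{\ast} := g\ast\eta_\epsilon$. Choose a smooth cutoff $\chi_\epsilon$ with $\chi_\epsilon\equiv 1$ off $S_{2\epsilon}$, $\chi_\epsilon\equiv 0$ on $S_\epsilon$, and $|\tilde\nabla^k\chi_\epsilon|\lesssim \epsilon^{-k}$ for $k=1,2$. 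Define
\begin{align*}
g_\epsilon \;:=\; \chi_\epsilon\,g \;+\; (1-\chi_\epsilon)\,g_\epsilon^{\ast}.
\end{align*}
Because $g$ is smooth on $M\setminus S\supset M\setminus S_\epsilon$ and $g_\epsilon^{\ast}$ is smooth wherever defined, $g_\epsilon$ is smooth, and conclusion (1) follows immediately.

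For (2), the Morrey embedding $W^{1,p}_{\text{loc}}\hookrightarrow C^{0,1-n/p}_{\text{loc}}$ (valid since $p>n$) gives $\|g-g_\epsilon^{\ast}\|_{L^\infty} = O(\epsilon^{1-n/p})\to 0$, so $g_\epsilon\to g$ in $C^0$. For (3), decompose
\begin{align*}
\tilde\nabla g_\epsilon \;=\; \chi_\epsilon\tilde\nabla g + (1-\chi_\epsilon)\tilde\nabla g_\epsilon^{\ast} + \tilde\nabla\chi_\epsilon\cdot(g-g_\epsilon^{\ast});
\end{align*}
the first two summands have $L^p$-norm on $B^h_1(x)$ bounded by $\|\tilde\nabla g\|_{L^p(B^h_{1+\epsilon}(x))}$, uniform in $x$, while the third, via $|\tilde\nabla\chi_\epsilon|\lesssim\epsilon^{-1}$ and the $L^p$-mollification rate $\|g-g_\epsilon^{\ast}\|_{L^p(K)}\lesssim\epsilon\|\tilde\nabla g\|_{L^p(K_\epsilon)}$, is likewise uniformly bounded.

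Conclusion (4) is the heart of the argument. Outside $S_{2\epsilon}$ one has $R_{g_\epsilon} = R_g\ge 0$, so only $\int_{S_{2\epsilon}}(R_{g_\epsilon})_-\,d\mu_h$ contributes. Expanding $R_{g_\epsilon}$ in coordinates produces pieces of schematic form $|\tilde\nabla^2 g_\epsilon^{\ast}|$, $|\tilde\nabla g_\epsilon^{\ast}|^2$, cutoff-times-difference terms $|\tilde\nabla^k\chi_\epsilon||g-g_\epsilon^{\ast}|$ and $|\tilde\nabla\chi_\epsilon||\tilde\nabla(g-g_\epsilon^{\ast})|$, plus classical smooth-side terms in $\tilde\nabla^k g$ on $S_{2\epsilon}\setminus S_\epsilon\subset M\setminus S$. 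Each is controlled by H\"older against $|S_{2\epsilon}|$, using the mollification estimates $\|\tilde\nabla^k g_\epsilon^{\ast}\|_{L^p}\lesssim\epsilon^{1-k}\|\tilde\nabla g\|_{L^p}$ and the Morrey bound, yielding a factor $o(\epsilon^{a})$ with $a>0$ along a subsequence $\epsilon_k\to 0$, where the exponent arithmetic closes out precisely because the lower Minkowski hypothesis $|S_{2\epsilon}| = o(\epsilon^{1+n/p})$ is matched to $p>n\ge 3$ (one checks e.g.\ that $(1+n/p)(1-1/p)-1 = ((n-1)p-n)/p^2 > 0$).

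The main obstacle is the tight balance of scales required to secure (3) and (4) simultaneously: a mollification scale $\delta\ll\epsilon$ breaks the uniform $W^{1,p}$ bound through the term $\tilde\nabla\chi_\epsilon(g-g_\delta)$, while $\delta\gg\epsilon$ ruins the $C^0$ convergence and inflates the second-derivative estimate controlling $\int_{S_{2\epsilon}}(R_{g_\epsilon})_-$. Taking $\delta=\epsilon$ is the unique balance point, and the Minkowski exponent $n-1-n/p$ in the hypothesis is sharply tuned so that all resulting surface integrals close up.
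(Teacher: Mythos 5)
Your construction is genuinely different from the paper's and, as stated, has a gap in the verification of conclusion (4). The paper follows Lee's device from \cite{Le13}: $g_\epsilon(x)=\int g(x-\sigma(x)y)\phi(y)\,dy$ with a variable mollification scale $\sigma$ that equals $\epsilon$ on $S_\epsilon$, vanishes outside $S_{2\epsilon}$, and satisfies $|\tilde\nabla\sigma|\le C$, $|\tilde\nabla^2\sigma|\le C/\epsilon$. The interpolation between the singular and smooth regimes thus happens \emph{inside} the convolution, so $g_\epsilon$ on the transition annulus $S_{2\epsilon}\setminus S_\epsilon$ is still a genuine mollification of $g$ and its second derivatives can be controlled by integration by parts onto the kernel, which produces only $\partial g$ (in $L^p$) together with derivatives of $\sigma$ and $\phi$ — never the raw $\partial^2 g$. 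Your $g_\epsilon=\chi_\epsilon g+(1-\chi_\epsilon)(g\ast\eta_\epsilon)$ instead interpolates \emph{outside} the convolution, as a pointwise convex combination, and this changes the structure of $R_{g_\epsilon}$ on $S_{2\epsilon}\setminus S_\epsilon$ in a way that your exponent bookkeeping does not address.

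Concretely, on $S_{2\epsilon}\setminus S_\epsilon$ write $g_\epsilon=g+v$ with $v=(1-\chi_\epsilon)(g\ast\eta_\epsilon-g)$. Expanding $R_{g_\epsilon}$ around $g$ produces, in addition to the terms you list, linearization terms of the form $\langle\mathrm{Ric}_g,v\rangle_g$ and, more generally, $(g_\epsilon^{-1}\ast g_\epsilon^{-1}-g^{-1}\ast g^{-1})\ast\partial^2 g$. These involve the raw second derivatives of $g$ on $S_{2\epsilon}\setminus S_\epsilon$. Since $g$ is only assumed $W^{1,p}_{\mathrm{loc}}$ near $S^1$ and smooth away from $S$, $|\tilde\nabla^2 g|$ (hence $|\mathrm{Ric}_g|$) can blow up at an arbitrary rate as one approaches $S$, and no hypothesis in Theorem \ref{PMT} caps it. Your phrase \emph{classical smooth-side terms in $\tilde\nabla^k g$ on $S_{2\epsilon}\setminus S_\epsilon$} acknowledges these pieces, but the claim that ``each is controlled by H\"older against $|S_{2\epsilon}|$'' fails for them: $|\tilde\nabla^2 g|$ admits no $L^p(S_{2\epsilon})$ bound that is uniform in $\epsilon$, so $\int_{S_{2\epsilon}\setminus S_\epsilon}|\mathrm{Ric}_g|\,|g\ast\eta_\epsilon-g|\,d\mu_h$ is not controlled by the Minkowski content alone. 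The positivity $R_g\ge 0$ kills the zeroth-order term but not these cross terms. Your closing discussion of the $\delta=\epsilon$ balance of scales is aimed at conclusion (3) and at the first-order cutoff terms; it is silent on this second-order problem. To repair the argument you would either need an a priori decay rate for $\tilde\nabla^2 g$ near $S$ (not assumed), or you should abandon the cutoff-glue and use the paper's variable-scale convolution, whose whole point is that $\partial^2 g$ never appears.
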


\begin{proof}
Since $g$ is asymptotically flat and continuous, we can choose a finite atlas $\{U_k\}_{k=1}^N$for $M$, such that $g$ is uniformly equivalent to the Euclidean metric of each chart, i.e. there exists a constant $C$, such that on each chart we have 
\[C^{-1}g_{\text{Euc}}\leq g\leq Cg_{\text{Euc}},\] 
where $g_{\text{Euc}}$ is the Euclidean metric on the chart, (here and below $C=C(g)$ will denote a positive constant which does not depend on $\epsilon$ and varies from line to line). Let $\psi_k$ be the partition of unity subordinate to the atlas, and denote $U'_k:=\text{supp}\psi_k$.

By \cite[Lemma 3.1]{Le13}, on each chart $U_k$ there exists a nonnegative smooth function $\sigma$, such that $\sigma=\epsilon$ on $S_\epsilon$,  $\sigma=0$ on $M\setminus S_{2\epsilon}$, and $0\leq \sigma\leq \epsilon$ on $S_{2\epsilon}\backslash S_\epsilon$, $|\tilde\nabla\sigma|\leq C$ and $|\tilde\nabla^2\sigma|\leq C/\epsilon$. We denote $\phi$ as the standard mollifier defined on $\mathbb{R}^n$ that satisfies $\phi\in C_0^{\infty}(B_1^{\text{Euc}})$, $0\leq \phi\leq 1$, and $\int_{\mathbb{R}^n}\phi(x)dx=1$, where $B_1^{\text{Euc}}$ denotes the  unit ball in the Euclidean space, and we denote $\phi_\epsilon(x)=x^{-n}\phi(x/\epsilon)$.

Then we firstly define $g_\epsilon^k$ on each chart $U'_k$ by \[(g_\epsilon^k)_{ij}(x):=\int_{\mathbb{R}^n}g_{ij}(x-\sigma(x)y)\phi(y)dy,\] 
and then define $g_\epsilon$ on the whole $M$ by $g_\epsilon:=\Sigma_{k=1}^N \psi_k g_\epsilon^k$.

We now check propositions (1) to (4). Let $f$ denote any component $g_{ij}^k$, and $f_\epsilon$ denote the component $(g_\epsilon^k)_{ij}$.

For (1), since $\sigma=0$ on $M\setminus S_{2\epsilon}$, we have $g_\epsilon^k=g$ on $M\setminus S_{2\epsilon}$. Thus (1) holds.

For (2), on each chart $U_k$, by definition we have 
\begin{align*}
|f_\epsilon(x)-f(x)|&\leq \int_{B_1^{\text{Euc}}} \left|f(x-\sigma (x)y)-f(x)\right|\phi(y)dy.
\end{align*}
By Morrey's embedding, we have that $f$ is $C^0$ on $M$. Since $\phi\in C_0^{\infty}(B_1^{\text{Euc}})$,  and $0\le \sigma\le \epsilon$ on $M$, we have that $g_\epsilon^k\to g$  on $U_k$ in $C^0$ norm as $\epsilon\to 0^+$, thus (2) holds.

For (3), note that

\begin{align*}
|\partial f_\epsilon(x)|&=\left|\int_{B_1^{\text{Euc}}} \partial f(x-\sigma(x)y)(\partial x -\partial \sigma(x)y)\phi(y)dy\right|\\
&\leq C\int_{B_1^{\text{Euc}}} |\partial f|(x-\sigma(x)y)\phi(y)dy.
\end{align*}
 Take an arbitrary measurable set $A'\subset M$ and let $A=A'+\epsilon B_1^{\text{Euc}}$. By Minkowski inequality, we have
\begin{align*}
\|\partial f_\epsilon\|_{L^p(A'\cap U'_k)}&\leq C\left(\int_{A'\cap U'_k}\left(\int_{B_1^{\text{Euc}}(0)}|\partial f|(x-\sigma(x)y)\phi(y)dy\right)^pdx\right)^{\frac{1}{p}}\\
&\leq C\int_{B_1^{\text{Euc}}(0)}\left(\int_{A'\cap U'_k}|\partial f|^p(x-\sigma(x)y)dx\right)^{\frac{1}{p}}\phi(y)dy
\end{align*}

For $\epsilon$ small enough such that $U'_k+\epsilon B_1^{\text{Euc}} \subset U_k$, we have
\begin{align*}
\|\partial f_\epsilon\|_{L^p(A'\cap U'_k)}&\leq C\int_{B^h_1(0)}\|\partial f\|_{L^p(A\cap U_k)}\phi(y)dy\\
&=C\|\partial f\|_{L^p(A\cap U_k)},
\end{align*}

Therefore we have
\begin{align*}
\|\partial (\psi_k f_\epsilon)\|_{L^p(A'\cap U'_k)}
&\leq \|\partial \psi_k\|_{L^\infty(M} \| f_\epsilon\|_{L^p(A'\cap U'_k)}+\|\psi_k\|_{L^\infty(M} \| \partial f_\epsilon\|_{L^p(A'\cap U'_k)}\\
&\le C(\| f\|_{L^p(A\cap U_k)}+\|\partial f\|_{L^p(A\cap U_k)}),
\end{align*}
 thus (3) holds.

For (4), if $x\in U_k\backslash S_\epsilon$, we have
\begin{align*}
|\partial f_\epsilon(x)|&=\bigg|\int\partial f(x-\sigma(x)y)(\partial x -\partial \sigma(x)y)\phi(y)dy\bigg|\\&\leq\bigg|\int C(\|\partial x\|+\|\partial \sigma\|)\phi(y)dy\bigg|\\&\leq C,
\end{align*}
\begin{align*}
|\partial^2f_\epsilon(x)|=&\bigg|\int[\partial^2f(x-\sigma(x)y)(\partial x-\partial\sigma(y))^2\\&+\partial f(x-\sigma(x)y)(\partial^2x-\partial^2\sigma(x)y)\phi(y)]dy\bigg|\\&\leq C/\epsilon.
\end{align*}
Thus we have  $|\partial g_\epsilon|\le C$ and $|\partial^2 g_\epsilon|\le C/\epsilon$ on $U_k\setminus S_\epsilon$. Therefore, since the scalar curvature of metric $g$ is a contraction of $\partial^2 g+g^{-1}*g^{-1}*\partial g*\partial g$, as the same as in \cite{Le13}, we have
\begin{align}\label{1R1}
\lim_{\epsilon\to 0^+}\int_{S_{2\epsilon}\setminus S_\epsilon}|R_{g_\epsilon}|d\mu_h =0,
\end{align}
where $R_{g_\epsilon}$ is the scalar curvature of $g_\epsilon$.

If $x\in S_\epsilon$, then we change the variable in the integration and get $f_\epsilon(x)=\int_{B^h_1(0)} f(x-y)\phi_\epsilon(y)dy$. Thus we have 
\begin{align*}
|\partial^2 f_\epsilon(x)|&=\bigg|\partial ^2\int_{B^h_1(0)} f(x-y)\phi_\epsilon(y)dy\bigg|\\
&=\bigg|\partial \int_{B^h_1(0)}\partial f(x-y)\phi_\epsilon(y)dy\bigg|\\
&=\bigg|\partial \int_{B^h_\epsilon(x)}\partial f(y)\phi_\epsilon(x-y)dy\bigg|\\
&=\bigg|\int_{B^h_\epsilon(x)} \partial f(y)\epsilon^{-n-1}\partial \phi(\frac{x-y}{\epsilon})dy\bigg|\\
&\leq \|\partial f\|_{L^p(S_{2\epsilon}\cap U_k)}\bigg(\int_{B^h_\epsilon(x)} \big|\epsilon^{-n-1}\partial \phi(\frac{x-y}{\epsilon})\big |^{p*}dy\bigg)^{\frac{1}{p*}}\\
&\leq C\big((\epsilon^{-n-1})^{p*}\epsilon^n\big)^{\frac{1}{p*}}\\
&=C\epsilon^{-1-n/p},
\end{align*}
where $p*$ is the number such that $\frac{1}{p}+\frac{1}{p*}=1$. Thus we have $|\partial^2 g_\epsilon|\le C\epsilon^{-1-n/p}$ on $S_\epsilon$.

Since the scalar curvature of metric $g$ is a contraction of $\partial^2 g+g^{-1}*g^{-1}*\partial g*\partial g$, we have
\begin{align}\label{1R2}
\int_{U_k}(R_{g_\epsilon})_-d\mu_h
&\le \int_{S_{\epsilon}\cap U_k}|R_{g_\epsilon}|d\mu_h+\int_{(S_{2\epsilon}\setminus S_\epsilon)\cap U_k}|R_{g_\epsilon}|d\mu_h\notag\\
&\leq C\int_{S_{\epsilon}\cap U_k}|\partial^2 g_\epsilon|dx+C\int_{S_{\epsilon}\cap U_k}|\partial g_\epsilon|^2dx+\int_{(S_{2\epsilon}\setminus S_\epsilon)\cap U_k}|R_{g_\epsilon}|d\mu_h\notag\\
&\leq C\mu_g(S_{\epsilon}\cap U_k)\epsilon^{(-1-n/p)}+C\|\partial g_\epsilon\|_{L^2(S_{\epsilon}\cap U_k)}^2+\int_{(S_{2\epsilon}\setminus S_\epsilon)\cap U_k}|R_{g_\epsilon}|d\mu_h\notag\\
&\leq C\frac{\mu_g(S_{\epsilon}\cap U_k)}{\epsilon^{(1+n/p)}}+C\left(\left(\mu_g(S_{\epsilon}\cap U_k)\right)^{\frac{1}{2}-\frac{1}{p}}\|\partial g_\epsilon\|_{L^2(S_{\epsilon}\cap U_k)}\right)^2+\int_{(S_{2\epsilon}\setminus S_\epsilon)\cap U_k}|R_{g_\epsilon}|d\mu_h.
\end{align}

Since the $(n-1-n/p)$-dimensional lower Minkowski content of $S$ is zero, we have
\begin{align}\label{1R3}
\lowlim_{\epsilon\to 0^+}\frac{\mu_g(S_{\epsilon}\cap U_k)}{\epsilon^{(1+n/p)}}=0.
\end{align}
And by (3), we have
\begin{align}\label{1R4}
\lowlim_{\epsilon\to 0^+}\left(\mu_g(S_{\epsilon}\cap U_k)\right)^{\frac{1}{2}-\frac{1}{p}}\|\partial g_\epsilon\|_{L^2(S_{\epsilon}\cap U_k)}=0
\end{align}

By \eqref{1R1}, \eqref{1R2}, \eqref{1R3} and \eqref{1R4}, we have
\begin{align}
\lowlim_{\epsilon\to 0^+}\int_{U_k}(R_{g_\epsilon})_-d\mu_h=0
\end{align}

 Recall that $g_\epsilon=g$ on $M\setminus S_{2\epsilon}$ and the scalar curvature of $g$ is nonnegative on $M\setminus S$, we have (4) holds, thus the lemma is proved.
\end{proof}

Note that the mollification and estimates in the proof above are  local, actually we  have proved the following:

\begin{lemma}\label{thm3.11}
Let $(M,g)$ be assumed as in Lemma \ref{PMT}, then for each $\iota=1,...,k_0$ there exists a family of metrics $g_\epsilon^\iota$ which is smooth on $(M\setminus S)\cup S^\iota$ such that\\
 (1) $g_\epsilon^\iota=g$ on $M\setminus S_{2\epsilon}^\iota$.\\
 (2) $g_\epsilon^\iota\to g$  on $M$ in $C^0$ norm as $\epsilon\to 0^+$ .\\
 (3) there exists an $A=A(g)>0$ which does not depend on $\epsilon$ and $x$, such that $\int_{B^h_1(x)}|\tilde\nabla g_\epsilon^\iota|^pd\mu_h<A$, where $p=\inf_{1\le \iota\le k_0} p_\iota$ and $B^h_1(x)$ is taken with respect to $h$.\\
 (4) Moreover, if $g$ has nonnegative scalar curvature, then \[\lowlim_{\epsilon\to0^+}\int_{U^\iota} (R_{g_\epsilon})_-d\mu_h = 0,\] where $(R_{g_\epsilon})_-$ is the negative part of the scalar curvature of $g_\epsilon^\iota$ and $U^\iota$ is a neighborhood of $S^\iota$ which only depends on $g$ and does not depend on $\epsilon$.
\end{lemma}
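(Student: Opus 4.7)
The plan is to imitate the proof of the previous lemma, but carry out the mollification only in a neighborhood of $S^\iota$ rather than of the full singular set $S$. The key observation that makes this work is that the $S^k$'s are pairwise disjoint compact sets, so I may choose an open neighborhood $U^\iota$ of $S^\iota$ whose closure is disjoint from $S^k$ for every $k\neq \iota$. Then for all sufficiently small $\epsilon$, the $2\epsilon$-neighborhood $S_{2\epsilon}^\iota$ is contained in $U^\iota$ and avoids the other $S^k$.

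Next, I would choose a finite subatlas $\{U_k\}$ refining the atlas used in the previous proof, subordinate partition of unity $\psi_k$, and a smooth cutoff $\sigma$ on each chart satisfying $\sigma=\epsilon$ on $S_\epsilon^\iota$, $\sigma=0$ outside $S_{2\epsilon}^\iota$, $0\le\sigma\le\epsilon$, $|\tilde\nabla\sigma|\le C$, $|\tilde\nabla^2\sigma|\le C/\epsilon$ (this exists by \cite[Lemma 3.1]{Le13}, applied to $S^\iota$ in place of $S$). Then I define $(g_\epsilon^\iota)^k_{ij}(x):=\int_{\mathbb{R}^n} g_{ij}(x-\sigma(x)y)\phi(y)\,dy$ on each chart and glue via the partition of unity: $g_\epsilon^\iota := \sum_k \psi_k g_\epsilon^{\iota,k}$. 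Because $\sigma\equiv 0$ outside $S_{2\epsilon}^\iota$, we immediately get property (1), and since $g$ is smooth off $S$, it follows that $g_\epsilon^\iota$ is smooth on $(M\setminus S)\cup S^\iota$ for $\epsilon$ small enough.

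The verification of (2), (3), (4) is now identical to the corresponding verification in the previous lemma, because the arguments there were local. For (2), $C^0$-convergence follows from the Morrey embedding (using $p_\iota>n$) and $0\le\sigma\le\epsilon$. For (3), the Minkowski-type bound on the mollifier and $\sigma$ gives the same $L^p$ estimate of $\tilde\nabla g_\epsilon^\iota$ on unit geodesic balls, with the same constant $A=A(g)$ (one may even take the same $A$ as in Lemma \ref{thm3.1} since the estimate only used $\|g\|_{W^{1,p}_{\rm loc}}$). For (4), the splitting into the region $S_\epsilon^\iota$ (where $|\partial^2 g_\epsilon^\iota|\le C\epsilon^{-1-n/p}$ via the convolution identity after a change of variable) and the transition annulus $S_{2\epsilon}^\iota\setminus S_\epsilon^\iota$ (where $|\partial g_\epsilon^\iota|\le C$ and $|\partial^2 g_\epsilon^\iota|\le C/\epsilon$) gives the bound
\begin{align*}
\int_{U^\iota}(R_{g_\epsilon^\iota})_-\,d\mu_h\le C\frac{\mu_g(S_\epsilon^\iota)}{\epsilon^{1+n/p}}+C\bigl(\mu_g(S_\epsilon^\iota)\bigr)^{1-2/p}\|\tilde\nabla g_\epsilon^\iota\|_{L^p(S_\epsilon^\iota)}^2+\int_{S_{2\epsilon}^\iota\setminus S_\epsilon^\iota}|R_{g_\epsilon^\iota}|\,d\mu_h,
\end{align*}
and the vanishing of the $(n-1-n/p_\iota)$-dimensional lower Minkowski content of $S^\iota$ (together with $p\le p_\iota$) forces the liminf of the right-hand side to be zero.

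There is no real obstacle here, since the whole point of the previous lemma is that every step was carried out locally; the only thing to verify carefully is that replacing $S$ by the single piece $S^\iota$ still produces a globally defined smooth metric off a neighborhood of $S^\iota$, which is ensured by the disjointness and compactness of the $S^k$'s. The statement of (4) is slightly weaker than in Lemma \ref{thm3.1} since we only control $(R_{g_\epsilon^\iota})_-$ on the fixed neighborhood $U^\iota$ of $S^\iota$ (outside $U^\iota$ the negative part of the scalar curvature need not vanish, because the metric is still singular near the other $S^k$'s), which is exactly what the statement claims.
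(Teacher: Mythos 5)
Your proposal is correct and matches the paper's own reasoning: the paper dispatches this lemma in a single sentence by remarking that the mollification and estimates in the preceding lemma are entirely local, so they apply verbatim to each $S^\iota$ separately, and you flesh out exactly that observation (disjointness/compactness of the $S^k$, a cutoff $\sigma$ supported near $S^\iota$, and the same local estimates (1)--(4)).
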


Secondly, we consider another particular case of Theorem \ref{PMT}, in which the metric is only singular at the corners.  we have the following lemma from \cite{Miao2002}:

\begin{lemma}\label{thm3.12}
Let $(M,g)$ be assumed as in Theorem \ref{PMT}, then there exists a family of metrics $g_\epsilon^0$ which is smooth on $(M\setminus S)\cup S^0$ such that\\
 (1) $g_\epsilon^0=g$ on $M\setminus S_{2\epsilon}^0$.\\
 (2) $g_\epsilon^0\to g$  on $M$ in $C^0$ norm as $\epsilon\to 0^+$ .\\
 (3) there exists an $A=A(g)>0$ which does not depend on $\epsilon$ and $x$, such that $\int_{B^h_1(x)}|\tilde\nabla g_\epsilon^0|^pd\mu_h<A$, where $p=\inf_{1\le \iota\le k_0} p_\iota$ and $B^h_1(x)$ is taken with respect to $h$.\\
 (4) Moreover, if $g$ has nonnegative scalar curvature, then \[\lowlim_{\epsilon\to0^+}\int_{U^0} (R_{g_\epsilon})_-d\mu_h = 0,\] where $(R_{g_\epsilon})_-$ is the negative part of the scalar curvature of $g_\epsilon^0$ and $U^0$ is a neighborhood of $S^0$ which only depends on $g$ and does not depend on $\epsilon$.
\end{lemma}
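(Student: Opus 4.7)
The plan is to adapt Miao's corner mollification construction from \cite{Miao2002}. The geometric input is the mean curvature inequality at $S^0$, and the analytic input is the Lipschitz regularity of $g$ across $S^0$. In a tubular neighborhood $U^0$ of $S^0$ I would first introduce a signed Fermi coordinate $t$ with respect to a smooth reference metric, with $t > 0$ on $M \setminus \Omega$, so that $g = dt^2 + g_t$ where $t \mapsto g_t$ is smooth on each of $\{t < 0\}$ and $\{t > 0\}$ and merely Lipschitz across $\{t = 0\}$. Then fix an even mollifier $\phi \in C_c^\infty(-1,1)$ and a cutoff $\eta_\epsilon$ that equals $1$ on $\{|t| \le \epsilon\}$ and vanishes outside $\{|t| \le 2\epsilon\}$, and define $g_\epsilon^0$ inside $U^0$ by convolving $g$ against $\phi_\epsilon(t) = \epsilon^{-1}\phi(t/\epsilon)$ in the $t$ variable and interpolating to $g$ through $\eta_\epsilon$; outside $U^0$ set $g_\epsilon^0 = g$.

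By construction $g_\epsilon^0$ is smooth on $(M \setminus S) \cup S^0$ (convolution in $t$ restores transverse smoothness across $S^0$), agrees with $g$ on $M \setminus S^0_{2\epsilon}$, and converges uniformly to $g$, giving (1) and (2). For (3), the uniform $L^\infty$ bound on $\tilde\nabla g$ near $S^0$ is preserved by the mollifier, and outside $U^0$ we have $g_\epsilon^0 = g$, so the $L^p$ integral on $B^h_1(x)$ is bounded uniformly in $\epsilon$ and $x$ (with $p_0 = +\infty$ under the Lipschitz assumption).

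The crux is (4). Using the Gauss-type formula in Fermi coordinates,
\[
R_{g_\epsilon^0} = R^{g^\epsilon_t} + |A_t^\epsilon|^2 - (\tr A_t^\epsilon)^2 - 2 \partial_t \tr A_t^\epsilon,
\]
where $A_t^\epsilon$ is the second fundamental form of $\{t = \mathrm{const}\}$ in $g_\epsilon^0$, the Lipschitz bound on $g$ makes the first three terms uniformly $L^\infty$-bounded on $\{|t| \le 2\epsilon\}$, so their contribution to $\int (R_{g_\epsilon^0})_-\,d\mu_h$ is of order $\mu_h(S^0_{2\epsilon}) = O(\epsilon)$, which tends to $0$. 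The remaining term $-2\partial_t \tr A_t^\epsilon$ concentrates at the corner: after mollification, $\tr A_t^\epsilon$ interpolates smoothly across the layer from the outer value $\tr A^+$ to the inner value $\tr A^-$, and hence $-2\partial_t \tr A_t^\epsilon$ converges distributionally to $2(\tr A^- - \tr A^+)\delta_{S^0}$ (with a consistent outward normal). The mean curvature condition in Theorem \ref{PMT} is exactly $\tr A^- \ge \tr A^+$, so this concentrated term is nonnegative in the limit. Since $g_\epsilon^0 = g$ outside $S^0_{2\epsilon}$ and $g$ has nonnegative scalar curvature there, combining yields $\lowlim_{\epsilon\to 0^+}\int_{U^0}(R_{g_\epsilon^0})_-\,d\mu_h = 0$.

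The main obstacle will be the precise separation and sign analysis of the singular $-2\partial_t \tr A_t^\epsilon$ contribution versus the $L^\infty$-bounded remainder in $R_{g_\epsilon^0}$; this accounting is exactly Miao's computation in \cite{Miao2002}. The only extra check is that the cutoff $\eta_\epsilon$ used in the interpolation introduces no additional negative contribution beyond $o(1)$ in $L^1$, which follows from the uniform Lipschitz bound on $g$ inside $U^0$.
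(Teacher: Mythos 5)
Your proposal is correct and follows essentially the same route as the paper, which is to adapt Miao's corner mollification from \cite{Miao2002}. The paper's own treatment of this lemma is quite terse: it simply observes that Miao's mollification and estimates in \cite[Section 3]{Miao2002} are local, cites \cite[Corollary 3.1]{Miao2002} for the uniform bound on the negative part of the scalar curvature near the corner (so that $\int_{U^0}(R_{g_\epsilon^0})_-\,d\mu_h\lesssim\mu_h(S^0_{2\epsilon})\to 0$), and cites \cite[Lemma 3.2]{Miao2002} for the $C^0$-convergence and the gradient bound needed for (3). What you have done is unpack the content of Miao's lemmas: the Fermi-coordinate writing $g=dt^2+g_t$, the transverse mollification, and the Gauss-equation decomposition of $R_{g_\epsilon^0}$ into an $L^\infty$-bounded bulk piece and a concentrating piece $-2\partial_t\tr A_t^\epsilon$ whose sign is controlled by the mean-curvature inequality $\tr A^-\ge\tr A^+$. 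That is exactly the mechanism behind Miao's estimates, so your write-up is consistent with what the paper is citing. Two minor points worth tightening: (i) the boundedness of the bulk terms $R^{g_t^\epsilon}+|A_t^\epsilon|^2-(\tr A_t^\epsilon)^2$ uses not just the Lipschitz bound but also that $\gamma(t)$ is a continuous path of $C^2$ tensors on $S^0$ (tangential $C^2$ regularity up to either side), which is the regularity the paper quotes from Miao's equations (4) and (6); (ii) you should state explicitly that it is only the \emph{negative} part of the scalar curvature that admits a uniform $L^\infty$ bound on the collar --- $R_{g_\epsilon^0}$ itself may blow up like $\epsilon^{-1}$, but with the favorable sign --- so that $\int_{U^0}(R_{g_\epsilon^0})_-\,d\mu_h=O(\mu_h(S^0_{2\epsilon}))=O(\epsilon)$, which is all that claim (4) requires.
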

\begin{proof}
Though not has been explicitly stated in \cite{Miao2002}, this lemma is actually proved in it. Thus we only sketch how to get Lemma \ref{thm3.12} by results in \cite{Miao2002}.

Since the mollification and estimates  in \cite[Section 3]{Miao2002} are local, we apply Miao's construction, and get a family of metrics $g_\epsilon^0$ which is smooth on $(M\setminus S)\cup S^0$. Then \cite[Corollary 3.1]{Miao2002} says that $g_\epsilon^0$ satisfies (1) and (2) in Lemma \ref{thm3.12}, and the scalar curvature of $g_\epsilon^0$ is uniformly bounded on $S_{2\epsilon}^0$ with bounds depending only on $g$, but not on $\epsilon$. Thus we have
\begin{align}
\int_{U^0} (R_{g_\epsilon})_-d\mu_h 
&=\int_{S_{2\epsilon}} |R_{g_\epsilon}|d\mu_h \notag\\
&\le C \mu_h(S_{2\epsilon}).
\end{align}
Thus we have 
\begin{align}\label{39}
\lim_{\epsilon\to0^+}\int_{U^0} (R_{g_\epsilon})_-d\mu_h 
&=0.
\end{align}

By equations (4) and (6) in \cite{Miao2002},  using a Gaussian coordinate of $S^0$, we have
\begin{align}
g=\gamma(t)+dt^2
\end{align}
where , $\gamma(t)$ is a continuous path in the space of $C^2$ symmetric $(0,2)$ tensors on $S^0$.

Note the equation (16) in \cite{Miao2002}, $g_\epsilon^0$ is constructed as:

\begin{align}
g_\epsilon^0=\left\{
\begin{aligned}&\gamma_\epsilon(t)+dt^2,\quad (x,t)\in S^0\times(-\epsilon_0,\epsilon_0),\\
&g,\quad (x,t)\notin S^0\times(-\epsilon_0,\epsilon_0),
\end{aligned}\right.
\end{align}
where $(x,t)$ is the Gaussian coordinate, $\gamma_\epsilon(t)$ is a family of paths in the space of $C^0$ symmetric $(0,2)$ tensors on $S^0$.

By \cite[Lemma 3.2]{Miao2002}, $\gamma_\epsilon(t)$ converges to $\gamma(t)$ in $C^2$ norm. Thus the proposition (3) in Lemma \ref{thm3.12} holds.

By \eqref{39}, we have the proposition (4) in Lemma \ref{thm3.12} holds, thus the lemma is proved.
\end{proof}

Combining the constructions above, we can prove Lemma \ref{thm3.1}:
\begin{proof}[Proof of Lemma \ref{thm3.1}]
Note that the constructions in Lemma \ref{thm3.11} and Lemma \ref{thm3.12} only change the metric $g$ on the $2\epsilon$-neighborhood of the chosen singular set. And for $\epsilon$ small enough the intersections of those $2\epsilon$-neighborhoods are empty sets. Thus we define
\begin{align}
g_\epsilon=\left\{
\begin{aligned}
&g_\epsilon^\iota,\quad {\text on}\  S_{2\epsilon}^\iota,1\le\iota\le k_0,\\
&g_\epsilon^0,\quad {\text on}\  S_{2\epsilon}^0,\\
&g,\quad {\text on}\  M\setminus S_{2\epsilon},
\end{aligned}\right.
\end{align}
where $g_\epsilon^\iota$ and $g_\epsilon^0$ are the constructions in Lemma \ref{thm3.11} and Lemma \ref{thm3.12} respectively. Then $g_\epsilon$ satisfies the conclustion of Lemma \ref{thm3.1}.
\end{proof}

\subsection{Estimates on Ricci flow}
We use the Ricci flow approach in \cite{McSz} and \cite{ShiTam}. In this subsection we prove some estimates on the Ricci flow, and the main result is:

\begin{lemma}\label{thm6.2}
There exists an $\epsilon(n)>0$ such that, for any $n$-manifold $M$ with a complete and asymptotically flat metric $\hat g\in W_{\rm{loc}}^{1,p}(M)(p<n\le +\infty)$ and a metric $h\in C^\infty(M)$ which is $(1+\frac{\epsilon(n)}{2})$-fair to $\hat g$, satisfying that there exists an $A>0$ which is independent of $x$, such that $\int_{B^h_1(x)}|\tilde\nabla \hat g|^pd\mu_h<A$, where $B^h_1(x)$, $\tilde \nabla$ and $|\cdot|$ are taken with respect to $h$. Then there exists a $T=T(n,h,A,p)>0$, and a family of metrics $g(t)\in C^{\infty}(M\times(0,T])$, which is a $h$-flow of initial metric $\hat g$, $h$ is $(1+\epsilon(n))$-fair to $g(t)$, and
\begin{enumerate}
\item$\int_{B^h_1(x)}|\tilde\nabla g(t)|^pd\mu_h\leq10A$, $\forall t\in(0,T]$.\\
\item$|\tilde\nabla g|(t)\leq C(n,h,A,p)/t^{\frac{n}{2p}}$, $\forall t\in(0,T]$.\\
\item$|\tilde\nabla^2 g|(t)\leq C(n,h,A,p)/t^{\frac{n}{4p}+\frac{3}{4}}$, $\forall t\in(0,T]$.
\end{enumerate}
\end{lemma}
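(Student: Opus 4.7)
The plan is to obtain $g(t)$ as a limit of smooth $h$-flows starting from mollifications of $\hat g$, and to establish the uniform estimates (1)--(3) via a combination of a maximum principle for fairness, Sobolev embedding (since $p>n$), and Shi-type parabolic smoothing on short time. First, I would approximate $\hat g$ by smooth metrics $\hat g_j$ via a localized mollification analogous to those in Subsection 2.1, tuned so that each $\hat g_j$ is $(1+\epsilon(n)/2)$-fair to $h$ and satisfies $\int_{B^h_1(x)} |\tilde\nabla \hat g_j|^p d\mu_h \le 2A$ uniformly in $x$ and $j$. For each smooth $\hat g_j$ the $h$-flow
\begin{align*}
\partial_t g_{ij} = -2R_{ij} + \tilde\nabla_i W_j + \tilde\nabla_j W_i, \qquad W^k = g^{pq}(\Gamma^k_{pq} - \tilde\Gamma^k_{pq}),
\end{align*}
is strictly parabolic and admits a short-time smooth solution $g_j(t)$ by standard theory (as used in Shi, McFeron--Sz\'ekelyhidi, and Shi--Tam). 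The goal is then to extend $g_j(t)$ to a common time interval $[0,T]$ with $T=T(n,h,A,p)$ on which (1)--(3) hold with constants independent of $j$, after which a diagonal subsequence converges to the desired $g(t)$.

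The uniform estimates are set up by a single continuity argument. Let $T^*_j$ be the supremum of times on which $h$ is $(1+\epsilon(n))$-fair to $g_j(t)$ \emph{and} the $W^{1,p}$ bound in (1) holds. Under these open conditions, $u := \tilde\nabla g_j$ satisfies a parabolic inequality of schematic form
\begin{align*}
\partial_t |u|^p \le g_j^{pq}\tilde\nabla_p\tilde\nabla_q|u|^p + C(h)\bigl(|u|^{p+2}+|u|^p+1\bigr);
\end{align*}
testing against an $h$-cutoff supported on $B^h_2(x)$ and using Sobolev embedding $W^{1,p}\hookrightarrow L^\infty$ (valid since $p>n$), together with the fairness bound, yields a strict improvement $\int_{B^h_1(x)}|u|^p d\mu_h \le 5A$ on a short interval, so the barrier is never saturated. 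The fairness condition is preserved by applying the maximum principle to the eigenvalues of $g_j h^{-1}$ as in Shi--Tam, once we have (3) to control $\int_0^t\|\partial_s g_j\|_{L^\infty}\,ds$.

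Estimates (2) and (3) are parabolic smoothing estimates with the scaling-consistent exponents $n/(2p)$ and $n/(4p)+3/4$. For (2), I would apply Moser iteration (or a Nash-type argument) to the parabolic system satisfied by $u = \tilde\nabla g_j$ on cylinders $B^h_{\sqrt{t}}(x) \times [t/2,t]$, passing from the uniform $L^p$ bound at $t/2$ to an $L^\infty$ bound at time $t$; the exponent $n/(2p)$ is exactly what parabolic scaling dictates for an $L^p \to L^\infty$ smoothing. For (3), I would bootstrap: apply Shi's derivative estimates to the equation satisfied by $\tilde\nabla^2 g_j$, using (2) to control the coefficients on $[t/2,t]$, and conclude by interpolation the stated bound $t^{-n/(4p)-3/4}$. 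Once the three estimates hold on $[0, T^*_j)$ with room to spare, $T^*_j \ge T$ for some $T = T(n,h,A,p)>0$, as required.

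Finally, the uniform bounds (1)--(3), together with standard parabolic interior regularity, give $C^\infty_{\text{loc}}$ compactness on $M \times (0,T]$, and a diagonal subsequence converges to a smooth $g(t)$ solving the $h$-flow with all three estimates. The initial condition $g(t) \to \hat g$ in $C^0$ as $t \to 0^+$ follows from integrating the bound $|\partial_t g_j| = O(t^{-n/(4p)-3/4})$ (whose exponent is strictly less than $1$ since $p > n$, hence integrable near $0$), combined with $\hat g_j \to \hat g$ uniformly. The main obstacle is the interlocked nature of the three estimates and the fairness bound: each is used to close the others, so the continuity argument must be choreographed carefully, and the explicit $p$-dependence of the decay exponents requires the Moser/Nash iteration to be tracked with sharp constants that match parabolic scaling.
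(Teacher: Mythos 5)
Your overall scheme — mollify, run a continuity argument for the $L^p$ bound (1), pass from $L^p$ to $L^\infty$ by parabolic Moser iteration for (2), interpolate against a higher-order estimate for (3), then extract a limit by compactness — is the same as the paper's. But three points are worth flagging. First, the paper does not re-derive short-time existence or fairness: it cites Simon's theorem (\cite{simon2002deformation}) for the $h$-flow from $C^0$ data and the $(1+\epsilon(n))$-fairness, so the maximum-principle-on-eigenvalues step you propose is unnecessary. Second, your justification for closing the continuity argument via ``Sobolev embedding $W^{1,p}\hookrightarrow L^\infty$'' is misplaced and would fail as stated: to bound $\|u\|_{L^\infty}$ that way you would need an a priori $L^p$ bound on $\tilde\nabla u=\tilde\nabla^2 g_j$, which you do not have. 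The paper's correct route is to first prove, as a standalone auxiliary lemma, that the $L^p$ bound (1) on a time interval \emph{already implies} the pointwise bound (2) on that interval via Moser iteration, and only then to feed the resulting $L^\infty$ bound $v=|\tilde\nabla g|^2+1\le C/t^{n/p}$ into a Gr\"onwall inequality $\phi_-'(t)\le C t^{-n/p}\phi(t)$ for $\phi(t)=\sup_x\int_{B^h_2(x)} f^{p/2}\eta^2\,d\mu_h$, yielding $\phi(t)\le\phi(0)e^{Ct^{1-n/p}}$ and hence a universal lower bound on the lifespan of (1). Third, for (3) there is no need to bootstrap Shi-type estimates afresh: the paper simply combines Simon's ready-made estimate $\sup_M|\tilde\nabla^3 g|\le c_3/t^{3/2}$ with the Gagliardo--Nirenberg interpolation $|\tilde\nabla^2 g|\lesssim|\tilde\nabla g|^{1/2}|\tilde\nabla^3 g|^{1/2}$, which produces the exponent $\tfrac{n}{4p}+\tfrac34$ in one line. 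Your treatment of $g(t)\to\hat g$ in $C^0$ is fine but also already part of Simon's conclusion.
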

\begin{remark}
Such a $h$ always exists, moreover, we can even let $h=\hat g$ away from a compact set, by letting $h=\hat g_{\epsilon_0}$ for some $\epsilon_0$ small enough, where  $\hat g_{\epsilon_0}$ is the mollification metric in Lemma \ref{thm3.1}.
\end{remark}

\begin{remark}
The asymptotically flat condition could be weakend. However, assuming this condition is good enough for our use and the proof will be simpler.
\end{remark}

\begin{remark}
The existence of $h$-flow start from $C^0$ metrics has been proved in  \cite{simon2002deformation}, and we only have to prove estimates (1)-(3) in Lemma \ref{thm6.2}.
\end{remark}

Let us explain the conception of $h$-flow.  It was introduced in \cite{simon2002deformation}, in order to study the Ricci flow $\frac{\partial}{\partial t}g=-2Ric$ (we omit $t$ for simplicity if their is no ambiguity) introduced in \cite{Hamilton1982} starts from non-smooth metrics.

\begin{definition}
For a constant $\delta\geq 1$, a metric $h$ is called to be $\delta$-fair to $g$, if $h$ is $C^\infty$, 
\[\sup_M|\tilde\nabla^j \text{Rm}_h|=k_j<\infty,\]
and \[\delta^{-1}h\leq g\leq \delta h \qquad \text{on} \quad M,\]
where $\text{Rm}_h$ is the Riemannian curvature of $h$,  $\tilde\nabla$ and $|\cdot|$ are taken with respect to $h$.
\end{definition}
\begin{definition}\label{definition8}[$h$-flow]Fix a smooth background metric $h$ on $M$, the $h$-flow is a family of metrics $g$ satisfies \[\frac{\partial}{\partial t}g_{ij}=-2R_{ij}+ \nabla_iW_j+ \nabla_jW_i,\]
where $\nabla$ is taken with respect to $g$, \[W_i=g_{ik}g^{pq}(\Gamma_{pq}^k-\tilde\Gamma_{pq}^k),\]
$\Gamma$ and $\tilde\Gamma$ are the Christoffel symbols of $g$ and $h$. 
\end{definition}

\begin{remark}\label{rmk1}
Let  $\Psi_t$ be a family of diffeomorphisms given by
\begin{align*}
\frac{\partial}{\partial t}\Psi_t(x)=-W(\Psi_t(x),t), 
\end{align*}
then $\psi_t^*g(t)$ is a Ricci flow, see \cite{simon2002deformation}  or \cite{ShiTam}.
\end{remark}

The existence of $h$-flow was given by the following theorem in \cite{simon2002deformation}:
\begin{theorem}\label{theorem2.2} \cite[Theorem 1.1]{simon2002deformation}
There exists an $\epsilon(n)>0$ such that, for any $n$-manifold $M$ with a complete $C^0$ metric $\hat g$ and a $C^\infty$ metric $h$ which is $(1+\frac{\epsilon(n)}{2})$-fair to $\hat g$, there exists a $T=T(n,k_0)>0$ and a family of metrics $g(t)\in C^{\infty}(M\times(0,T]),t\in (0,T]$ which solves $h$-flow for $t\in(0,T]$, $h$ is $(1+\epsilon(n))$-fair to $g(t)$, and \[\lim_{t\to0^+}\sup_{x\in M}|g(x,t)-\hat g(x)|=0.\]
\[\sup_{M}|\tilde\nabla^ig|\leq\frac{c_i(n,k_1,...,k_i)}{t^{i/2}},\quad\forall t\in(0,T],\]
where $\tilde \nabla$ and $|\cdot|$ are taken respect to $h$.
\end{theorem}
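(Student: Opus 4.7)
The plan is to treat the $h$-flow in its Ricci--DeTurck form as a strictly parabolic quasilinear system, establish short-time existence for smooth approximations of $\hat g$, and then pass to $\hat g$ itself via uniform estimates. First I would expand the $h$-flow equation. Whenever $g$ is uniformly equivalent to $h$, a direct computation recasts the equation into the form
\begin{align*}
\frac{\partial}{\partial t} g_{ij} = g^{ab}\,\tilde\nabla_a\tilde\nabla_b g_{ij} \;+\; Q(g^{-1},\tilde\nabla g) \;+\; L(g, \Rm_h),
\end{align*}
where $Q$ is quadratic in $\tilde\nabla g$ with coefficients depending smoothly on $g^{-1}$ and $L$ depends only on $g$ and on the fixed background curvature. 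This is strictly parabolic in $g_{ij}$, so for smooth initial data short-time existence is standard.

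Next I would mollify $\hat g$ on a finite coordinate atlas to get smooth $\hat g_\delta \to \hat g$ in $C^0$, inheriting a $(1 + \tfrac{\epsilon(n)}{2} + o_\delta(1))$-fairness bound with respect to $h$, and let $g_\delta(t)$ denote the $h$-flow starting from $\hat g_\delta$. The heart of the argument is to establish a uniform existence time $T = T(n,k_0) > 0$ and the Bernstein estimates $|\tilde\nabla^i g_\delta|(t) \le c_i/t^{i/2}$ on $M \times (0,T]$. To preserve the fairness, apply the maximum principle to the scalars $\tr_h g_\delta$ and $\tr_{g_\delta} h$: their evolution equations are parabolic with a zeroth-order reaction bounded by the current fairness and by $|\Rm_h|$, so a Gr\"onwall-type persistence argument upgrades the initial bound to $(1+\epsilon(n))$-fairness on a uniform time interval depending only on $n$ and $k_0$.

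For the higher derivative estimates, I would compute $\partial_t\bigl(t^{\,i}|\tilde\nabla^i g|^2\bigr)$, use the fairness to dominate the cross terms by the good Laplacian piece, and absorb the remaining cubic and quartic reaction terms by inducting on $i$ and substituting the previously-established lower-order bounds. The singular time-weight $t^{\,i}$ at $t=0$ kills any contribution from the initial data, which is crucial since $\hat g_\delta$ carries no uniform $C^1$ bound as $\delta \to 0$. With both families of estimates in hand, Arzel\`a--Ascoli on $K \times [\tau,T]$ for compact $K \subset M$ and $\tau > 0$ extracts a subsequential $C^\infty$-on-compacts limit $g(t)$ solving the $h$-flow; the initial condition $\sup_M|g(\cdot,t) - \hat g| \to 0$ then follows by integrating $\partial_t g$ in $t$, using the derivative estimates to control the integrand, and combining the preserved $C^0$-closeness of $g_\delta(t)$ to $\hat g_\delta$ with $\hat g_\delta \to \hat g$ uniformly.

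The main obstacle will be closing the Bernstein estimates uniformly in $\delta$: the cubic-in-$\tilde\nabla g$ reaction term appearing in $\partial_t|\tilde\nabla g|^2$ cannot be absorbed into the Laplacian term unless the fairness constant is quantitatively small. This is precisely what forces the dimension-dependent threshold $\epsilon(n)$ and the time $T$ to depend only on $n$ and $k_0$, and it is also what dictates the exponent $i/2$ in the derivative bounds, which comes from matching scaling in the parabolic maximum principle.
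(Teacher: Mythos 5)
The paper does not prove Theorem \ref{theorem2.2}; it is imported verbatim as Theorem 1.1 of Simon's paper \cite{simon2002deformation}, so there is no internal proof to compare against. Treated as a reconstruction of Simon's argument, your sketch captures the correct skeleton: the Ricci--DeTurck reformulation into a strictly parabolic quasilinear system, mollification of the $C^0$ initial metric, uniform short-time existence with $t^{i/2}$-weighted Bernstein estimates, and an Arzel\`a--Ascoli limit.

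The one genuine imprecision is in the fairness-preservation step. You claim that the evolution of $\tr_h g$ and $\tr_g h$ is ``parabolic with a zeroth-order reaction bounded by the current fairness and by $|\Rm_h|$,'' but the Ricci--DeTurck equation itself carries reaction terms quadratic in $\tilde\nabla g$, and the same quadratic-in-$\tilde\nabla g$ terms reappear in the evolution of any scalar built from $g$ and $h$. For $C^0$ initial data those terms are not a priori controlled, so the fairness estimate cannot be decoupled from the $i=1$ derivative estimate: the two must be closed simultaneously. Simon handles this by applying the maximum principle to a quantity of the form $\bigl(a-|g-h|_h^2\bigr)^{-1}|\tilde\nabla g|^2$, which packages the good negative term arising from $-2|\tilde\nabla(g-h)|^2$ together with $|\tilde\nabla g|^2$, so that the dangerous cubic reaction term can be absorbed precisely because $\epsilon(n)$ is small. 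Your final paragraph correctly identifies the cubic term as the obstruction, but that observation sits in tension with the earlier ``zeroth-order reaction'' claim; a careful writeup must reconcile the two by running the fairness bound and the first-derivative bound jointly rather than sequentially.
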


To prove Lemma \ref{thm6.2}, we firstly prove (1) implies  (2):
\begin{lemma}\label{lemma3}
In the condition of Lemma \ref{thm6.2}, if for some $T\in(0,1]$, there holds $\int_{B^h_1(x)}|\tilde\nabla g(t)|^pd\mu_h\leq10A$, $\forall t\in(0,T]$, then for the same $T$, $|\tilde\nabla g|(t)\leq C(n,h,A,p)/t^{\frac{n}{2p}}$, $\forall t\in(0,T]$ also holds.
\end{lemma}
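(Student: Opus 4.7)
The plan is to prove a parabolic $L^{p/2}$-to-$L^\infty$ mean value inequality for $u := |\tilde\nabla g|^2$ on parabolic cylinders of size $\sqrt{t}$, from which the pointwise bound $|\tilde\nabla g|(x,t)\le C\,t^{-n/(2p)}$ follows immediately from the hypothesis. To set up the parabolic inequality, I would first write the $h$-flow in its DeTurck form (Definition \ref{definition8})
\[\partial_t g_{ij} = g^{ab}\tilde\nabla_a\tilde\nabla_b g_{ij} + g*\Rm_h + g^{-1}*\tilde\nabla g*\tilde\nabla g\]
(schematically), commute $\tilde\nabla$ through the parabolic operator, and pair with $\tilde\nabla g$. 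The $(1+\epsilon(n))$-fairness of $h$ with respect to $g(t)$ guarantees that $g^{ab}\tilde\nabla_a\tilde\nabla_b$ is uniformly elliptic with respect to $h$; absorbing cross terms $|\tilde\nabla g||\tilde\nabla^2 g|$ via Cauchy--Schwarz into the good second-derivative term then produces an evolution inequality of the schematic form
\[\partial_t u \;\le\; g^{ab}\tilde\nabla_a\tilde\nabla_b u \;-\; c_0\,|\tilde\nabla^2 g|^2 \;+\; C(n,h)\bigl(1 + u + u^2\bigr).\]

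Next, I apply the parabolic Moser iteration (in the spirit of \cite{ShiTam, simon2002deformation}) to this subsolution on the cylinder $Q_r(x_0,t_0):=B^h_r(x_0)\times(t_0-r^2,t_0)$ with $r\le\min\{1,\sqrt{t_0}\}$. Because $h$ has bounded geometry and the equation is uniformly parabolic with respect to $h$, and because the hypothesis $p>n$ makes the $L^{p/2}$ norm scaling-supercritical relative to the nonlinearity $u^2$, the standard De~Giorgi--Nash--Moser iteration closes and yields, for any $q\in(0,p/2]$,
\[\sup_{Q_{r/2}(x_0,t_0)} u \;\le\; C\,r^{-(n+2)/q}\,\|u\|_{L^q(Q_r)}.\]

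Taking $q=p/2$ and $r=\sqrt{t_0/2}$, the spatial hypothesis $\int_{B^h_1(x)}|\tilde\nabla g(s)|^p\,d\mu_h\le 10A$ integrates in $s\in[t_0-r^2,t_0]$ to
\[\int_{Q_r} u^{p/2}\,d\mu_h\,ds \;\le\; r^2\cdot 10A,\]
so the mean value inequality gives
\[u(x_0,t_0) \;\le\; C\,r^{-2(n+2)/p}(10A\cdot r^2)^{2/p} \;=\; C(10A)^{2/p}\,t_0^{-n/p},\]
which is the claimed bound. The main obstacle is the quadratic nonlinearity $u^2$ in the evolution inequality: without the strict inequality $p>n$ it would be supercritical for a naive Moser setup, so care is required to exploit $p>n$ together with the good term $-c_0|\tilde\nabla^2 g|^2$ in order to close the iteration. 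The remaining ingredients---the precise form of the evolution inequality and the verification of uniform ellipticity---are routine consequences of DeTurck's identity and the fairness of $h$.
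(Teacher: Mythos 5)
Your approach is correct and takes essentially the same route as the paper: a parabolic Moser iteration applied to the evolution inequality satisfied by $u=|\tilde\nabla g|^2$, closed by exploiting the uniform-in-time spatial $L^{p/2}$ control with $p>n$. The packaging differs slightly --- you run a mean value inequality on parabolic cylinders $Q_r(x_0,t_0)$ at scale $r=\sqrt{t_0/2}$, whereas the paper works at fixed spatial scale (balls with radii in $[1/2,1]$), iterates the time interval $[\tau_k,T']$, and finally specializes $t_0=T'/2$, $t_1\to T'$ --- but both yield the same $u(x_0,t_0)\le C\,t_0^{-n/p}$ and hence $|\tilde\nabla g|\le C\,t^{-n/(2p)}$.

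One point deserves to be made explicit. When you "integrate in $s$" to obtain $\int_{Q_r}u^{p/2}\,d\mu_h\,ds\le 10A\,r^2$, you retain only the space-time $L^{p/2}_{t,x}$ norm of the effective potential $V=u$ appearing in $u^2=Vu$; for a generic $V\in L^q_{t,x}(Q_r)$ the parabolic Moser iteration is subcritical only when $q>(n+2)/2$, which with $q=p/2$ forces $p>n+2$. To cover the full range $p>n$ one must instead feed the slicewise control $\sup_t\|u(t)\|_{L^{p/2}(B^h_1(x))}\le(10A)^{2/p}$ into each iteration step (subcritical when $p/2>n/2$), which is exactly what the paper does via the spatial H\"older--interpolation estimate \eqref{aa1}. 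Your sketch appeals to scaling heuristics in a way that is consistent with this, but a reader implementing the "standard" iteration with only the space-time bound would lose the range $n<p\le n+2$, so the slicewise step should be spelled out. A smaller remark: the good term $-c_0|\tilde\nabla^2 g|^2$ is used only once, to absorb the cross term $|\tilde\nabla g|^2|\tilde\nabla^2 g|$ into $|\tilde\nabla g|^4$ via Cauchy--Schwarz before the iteration begins, after which it is discarded; it plays no further role in "closing" the iteration, contrary to what your last sentence suggests.
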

\begin{proof}
By \cite{Shi1989}, we have the evolution equation of $|\tilde\nabla g_{ij}|^2$
\begin{align*}
\frac{\partial}{\partial t}|\tilde\nabla g_{ij}|^2=&g^{\alpha\beta}\tilde\nabla_{\alpha}\tilde\nabla_{\beta}|\tilde\nabla g_{ij}|^2-2g^{\alpha\beta}\tilde\nabla_\alpha(\tilde\nabla g_{ij})\cdot\tilde\nabla_\beta(\tilde\nabla g_{ij})\\
&+\text{\~{Rm}}*g^{-1}*g^{-1}*g*\tilde\nabla g*\tilde\nabla g+g^{-1}*g*\tilde\nabla \text{\~{Rm}}*\tilde\nabla g\\
&+g^{-1}*g^{-1}*\tilde\nabla g*\tilde\nabla g*\tilde\nabla^2 g+g^{-1}*g^{-1}*g^{-1}*\tilde\nabla g*\tilde\nabla g*\tilde\nabla g*\tilde\nabla g,
\end{align*}
where the derivatives and norms are with respect to $h$, and \~{Rm} is the Riemannian curvature tensor of $h$.
Thus
\begin{align*}
\frac{\partial}{\partial t}|\tilde\nabla g_{ij}|^2-g^{\alpha\beta}\tilde\nabla_{\alpha}\tilde\nabla_{\beta}|\tilde\nabla g_{ij}|^2&\leq -C_1(n,h)|\tilde\nabla^2 g|^2\\
&\quad+C(n,h)|\tilde\nabla g|^2+C(n,h)|\tilde\nabla g|\\
&\quad+C(n)|\tilde\nabla g|^4+C_2(n)|\tilde\nabla g|^2|\tilde\nabla^2 g|\\
&\leq -C_1(n,h)|\tilde\nabla^2 g|^2\\
&\quad+C(n,h)|\tilde\nabla g|^2+C(n,h)|\tilde\nabla g|\\
&\quad+C(n)|\tilde\nabla g|^4+\frac{C_2(n)}{2\epsilon}|\tilde\nabla g|^4+\frac{C_2(n)\epsilon}{2}|\tilde\nabla^2 g|^2,\end{align*}
here and below $C$ and $C_i$s will denote positive constans depend at mostly on $n,h,A,p$ and $C$ can vary from line to line.
\begin{align*}
\frac{\partial}{\partial t}|\tilde\nabla g_{ij}|^2-g^{\alpha\beta}\tilde\nabla_{\alpha}\tilde\nabla_{\beta}|\tilde\nabla g_{ij}|^2
&\leq C(n,h)(\tilde\nabla g|+|\tilde\nabla g|^2+|\tilde\nabla g|^4),
\end{align*}
where we get the last inequality by taking $\epsilon=\frac{C_1}{C_2}$.\\
Denote $f=|\tilde\nabla g_{ij}|^2+1$, then we have
\[\frac{\partial}{\partial t}f-g^{\alpha\beta}\tilde\nabla_\alpha\tilde\nabla_\beta f\leq C(n,h)(f+f^{\frac{1}{2}}+f^2)\leq C(n,h)f(1+f)\]
Denote $v=1+f$, then 
\begin{equation}\label{evof}\frac{\partial}{\partial t}f-g^{\alpha\beta}\tilde\nabla_\alpha\tilde\nabla_\beta f\leq C(n,h)fv.
\end{equation}
And suppose that $T>0$ is a constant such that $\int_{B^h_1(x)}|\tilde\nabla g(t)|^pd\mu_h\leq10A$, $\forall t\in(0,T]$. Since $B^h_2(x)$ could be covered by finite many balls of radius $1$, and the amount of those balls is only depend on $n$ and $h$, we have
\[\int_{B^h_2(x)}v^{\frac{p}{2}}d\mu_h\leq C(n,h,A,p), \forall x\in M, \forall t\in[0,T],\]

For any $ x\in M$ and $0<r<R\leq2$, we let $\eta_0:\mathbb{R}\to [0,1]$ be a smooth function such that ${\text{supp}}\eta_0\in[0,R)$, $\eta_0\equiv1$ on $[0,r]$ and denote $\eta=\eta_0 \comp d(\cdot,x)$, where $d$ is the distance function with respect to metric $h$. We can assume $|\tilde\nabla \eta|\leq \frac{C(h)}{R-r}$, where the derivative is with respect to $h$.

For any $q\geq\frac{p}{2}-1$, we multiple $\eta^2 f^{q}$ to \eqref{evof} and integrate it, then we get
\begin{equation}
\int_{B^h_R(x)}\bigg(\frac{\partial}{\partial t}f-g^{\alpha\beta}\tilde\nabla_\alpha\tilde\nabla_\beta f\bigg)\eta^2f^{q}d\mu_h\leq C(n,h)\int_{B^h_R(x)}f^{q+1}v\eta^2d\mu_h
\end{equation}

Thus we have
\begin{align}\label{a0}
\frac{1}{q+1}\frac{\partial}{\partial t}\int_{B^h_R(x)}\eta^2f^{q+1}d\mu_h\leq
C(n,h)\bigg[&-\int_{B^h_R(x)}(\tilde\nabla_\alpha g^{\alpha\beta})(\tilde\nabla_\beta f)\eta^2f^{q}d\mu_h\notag\\
&-\int_{B^h_R(x)} g^{\alpha\beta}(\tilde\nabla_\beta f)\tilde\nabla_\alpha(\eta^2f^{q})d\mu_h\notag\\
&+\int_{B^h_R(x)}f^{q+1}v\eta^2d\mu_h\bigg].
\end{align}

For the first term in the right hand side, we have
\begin{align}\label{a1}
-\int_{B^h_R(x)}(\tilde\nabla_\alpha g^{\alpha\beta})(\tilde\nabla_\beta f)\eta^2f^{q}d\mu_h&\leq C(n,h)\int_{B^h_R(x)}f^{\frac{1}{2}}|\tilde\nabla f|\eta^2f^{q}d\mu_h\notag\\
&\leq C_1\epsilon\int_{B^h_R(x)}|\tilde\nabla f|^2\eta^2f^{q-1}d\mu_h\notag\\
&\quad +\frac{C_1}{\epsilon}\int_{B^h_R(x)}\eta^2f^{q+2}d\mu_h,
\end{align}
for any $\epsilon>0$.

And for the second term in the right hand side, we have
\begin{align}\label{a2}
&\quad -\int_{B^h_R(x)} g^{\alpha\beta}(\tilde\nabla_\beta f)\tilde\nabla_\alpha(\eta^2f^{q})d\mu_h\notag\\
&=-q\int_{B^h_R(x)} (g^{\alpha\beta}\tilde\nabla_\beta f\tilde\nabla_\alpha f)f^{q-1}\eta^2d\mu_h-2\int_{B^h_R(x)} g^{\alpha\beta}(\tilde\nabla_\beta f)f^{q}(\tilde\nabla_\alpha \eta)\eta d\mu_h\notag\\
&\leq -C_2(n,h)q\int_{B^h_R(x)}|\tilde\nabla f|^2f^{q-1}\eta^2d\mu_h+C_3(n,h)\int_{B^h_R(x)}|\tilde\nabla f|f^{q}|\tilde\nabla \eta|\eta d\mu_h\notag\\
&\leq -C_2q\int_{B^h_R(x)}|\tilde\nabla f|^2f^{q-1}\eta^2d\mu_h+C_3\frac{C_2q}{2C_3}\int_{B^h_R(x)}|\tilde\nabla f|^2f^{q-1}\eta^2d\mu_h\notag\\
&\quad+C_3\frac{C_3}{2C_2q}\int_{B^h_R(x)}f^{q+1}|\tilde\nabla\eta|^2d\mu_h\notag\\
&\leq -C_4q\int_{B^h_R(x)}|\tilde\nabla f|^2f^{q-1}\eta^2d\mu_h+\frac{C_4}{q}\int_{B^h_R(x)}f^{q+1}|\tilde\nabla\eta|^2d\mu_h.
\end{align}
Take $\epsilon=\frac{C_4q}{2C_1}$ in \eqref{a1}, recall that $v=f+1$ and hence $f^{q+2}\leq f^{q+1}v$, combining \eqref{a0}, \eqref{a1} and \eqref{a2} we have
\begin{align}
&\quad\frac{1}{q+1}\frac{\partial}{\partial t}\int_{B^h_R(x)}\eta^2f^{q+1}d\mu_h+C(n,h)q\int_{B^h_R(x)}|\tilde\nabla f|^2f^{q-1}\eta^2d\mu_h\notag\\
&\leq \frac{C(n,h)}{q}\int_{B^h_R(x)}f^{q+1}|\tilde\nabla\eta|^2d\mu_h+C(n,h)\int_{B^h_R(x)}f^{q+1}v\eta^2d\mu_h
\end{align}
Since
\begin{align*}
&\quad\int_{B^h_R(x)}\big|\tilde\nabla(f^{\frac{q+1}{2}}\eta)\big|^2d\mu_h\\
&=\int_{B^h_R(x)}\big|\tilde\nabla\eta f^{\frac{q+1}{2}}+\frac{q+1}{2}\tilde\nabla f\cdot f^{\frac{q-1}{2}}\eta\big|^2d\mu_h\\
&\leq\frac{(q+1)^2}{2}\int_{B^h_R(x)}|\tilde\nabla f|^2f^{q-1}\eta^2d\mu_h+2\int_{B^h_R(x)}|\tilde\nabla \eta|^2f^{q+1}d\mu_h,
\end{align*}
we have
\begin{align}
&\quad\frac{1}{q+1}\frac{\partial}{\partial t}\int_{B^h_R(x)}\eta^2f^{q+1}d\mu_h+C(n,h)\frac{q}{(q+1)^2}\int_{B^h_R(x)}\big|\tilde\nabla(f^{\frac{q+1}{2}}\eta)\big|^2d\mu_h\notag\\
&\leq \frac{C(n,h)}{q}\int_{B^h_R(x)}f^{q+1}|\tilde\nabla\eta|^2d\mu_h+C(n,h)(1+\frac{q}{(q+1)^2})\int_{B^h_R(x)}f^{q+1}v\eta^2d\mu_h.
\end{align}
Since $q\geq\frac{p}{2}-1$, $\frac{q}{q+1}\leq1$ and $|\tilde\nabla \eta|<\frac{C(n,h)}{R-r}$,
we have
\begin{align}\label{aa0}
&\quad\frac{\partial}{\partial t}\int_{B^h_R(x)}\eta^2f^{q+1}d\mu_h+\int_{B^h_R(x)}\big|\tilde\nabla(f^{\frac{q+1}{2}}\eta)\big|^2d\mu_h\notag\\
&\leq \frac{C(n,h,p)q}{(R-r)^2}\int_{B^h_R(x)}f^{q+1}d\mu_h+C(n,h,p)q\int_{B^h_R(x)}f^{q+1}v\eta^2d\mu_h.
\end{align}
For the last term, use H\"older inequality and since $p>n$ we can use interpolation inequality, then we have
\begin{align}\label{aa1}
\int_{B^h_R(x)}f^{q+1}v\eta^2d\mu_h&\leq \bigg(\int_{B^h_R(x)}v^{\frac{p}{2}}d\mu_h\bigg)^{\frac{2}{p}}\bigg(\int_{B^h_R(x)}(f^{q+1}\eta^2)^{\frac{p}{p-2}}d\mu_h\bigg)^{\frac{p-2}{p}}\notag\\
&\leq C(n,h,A,p)\bigg(\epsilon\int_{B^h_R(x)}(f^{q+1}\eta^2)^{\frac{n}{n-2}}d\mu_h\bigg)^{\frac{n-2}{n}}+\epsilon^{-\mu}\int_{B^h_R(x)}f^{q+1}\eta^2d\mu_h\bigg],
\end{align}
where $\mu=(\frac{n-2}{n}-\frac{p-2}{p})/(\frac{p-2}{p}-1)=\frac{p-n}{n}$.
We also have the Sobolev inequality
\begin{equation}\label{aa2}
\int_{B^h_R(x)}\big|\tilde\nabla(f^{\frac{q+1}{2}}\eta)\big|^2d\mu_h\geq C(n,h)\bigg(\int_{B^h_R(x)}(f^{q+1}\eta^2)^{\frac{n}{n-2}}d\mu_h\bigg)^{\frac{n-2}{n}}.
\end{equation}
Take $\epsilon$ in \eqref{aa1} to be some appropriate constant whose form is like $\frac{C}{q}$, where the $C$ depending on constant $C$s in inequations above, combine \eqref{aa0}, \eqref{aa1} and \eqref{aa2}, we have
\begin{align}\label{15}
\frac{\partial}{\partial t}\int_{B^h_r(x)}f^{q+1}d\mu_h+\bigg(\int_{B^h_r(x)}(f^{q+1})^{\frac{n}{n-2}}d\mu_h\bigg)^{\frac{n-2}{n}}\notag\\
\leq\frac{C(n,h,A,p)(q+q^{1+\mu})}{(R-r)^2}\int_{B^h_R(x)}f^{q+1}d\mu_h.
\end{align}
For any $0<t'<t''<T'\leq T\leq1$, let
\[\psi(t)=\left\{
\begin{array}{rl}
0, & \text{if } 0\leq t\leq t'\\
\frac{t-t'}{t''-t'}, & \text{if } t'\leq t\leq t'',\\
1, & \text{if } t''\leq t\leq T.
\end{array} \right. \]
Multiplying (\ref{15}) by $\psi$, we get
\begin{align}\label{16}
\frac{\partial}{\partial t}\int_{B^h_r(x)}\psi f^{q+1}d\mu_h+\psi\bigg(\int_{B^h_r(x)}(f^{q+1})^{\frac{n}{n-2}}d\mu_h\bigg)^{\frac{n-2}{n}}\notag\\
\leq[\frac{C(n,h,A,p)(q+q^{1+\mu})}{(R-r)^2}\psi+\psi']\int_{B^h_R(x)}f^{q+1}d\mu_h.
\end{align}
Integrating this with respect to $t$, we get
\begin{align}
\sup_{t\in[t'',T']} \int_{B^h_r(x)}f^{q+1}d\mu_h+\int_{t''}^{T'}\bigg(\int_{B^h_r(x)}(f^{q+1})^{\frac{n}{n-2}}d\mu_h\bigg)^{\frac{n-2}{n}}dt\notag\\
\leq[\frac{C(n,h,A,p)(q+q^{1+\mu})}{(R-r)^2}+\frac{1}{t''-t'}]\int_{t'}^{T'}\int_{B^h_R(x)}f^{q+1}d\mu_hdt.
\end{align}
\begin{align}\label{14}
&\quad\int_{t''}^{T'}\int_{B^h_r(x)}f^{(q+1)(1+\frac{2}{n})}d\mu_hdt\notag\\
&\leq \int_{t''}^{T'}\bigg(\int_{B^h_r(x)}f^{q+1}d\mu_h\bigg)^{\frac{2}{n}}\bigg(\int_{B^h_r(x)}f^{(q+1)\frac{n}{n-2}}d\mu_h\bigg)^{\frac{n-2}{n}}dt\notag\\
&\leq\sup_{t\in[t'',{T'}]}\bigg(\int_{B^h_r(x)}f^{q+1}d\mu_h\bigg)^{\frac{2}{n}}\int_{t''}^{T'}\bigg(\int_{B^h_r(x)}f^{(q+1)\frac{n}{n-2}}d\mu_h\bigg)^{\frac{n-2}{n}}dt\notag\\
&\leq[\frac{C(n,h,A,p)(q+q^{1+\mu})}{(R-r)^2}+\frac{1}{t''-t'}]^{1+\frac{n}{2}}\bigg(\int_{t'}^{T'}\int_{B^h_R(x)}f^{q+1}d\mu_hdt\bigg)^{1+\frac{2}{n}}
\end{align}
Denote \[H(q,\rho,\tau)=\bigg(\int_{\tau}^{T'}\int_{B^h_\rho(x)}f^{q}d\mu_hdt\bigg)^{\frac{1}{q}},\forall q\geq\frac{p}{2}, 0<r<1, 0<\tau<T',\]
Then equation (\ref{14}) can be shortly writen as
\[H(q(1+\frac{2}{n}),r,t'')\leq[\frac{C(n,h,A,p)}{(R-r)^2}(q+q^{1+\mu})+\frac{1}{t''-t'}]^\frac{1}{q} H(q,R,t')\]
Fix $0<t_0<t_1<T'\leq1$, $q_0\geq2$ and set $\chi=1+\frac{2}{n}$, $q_k=q_0\chi^k$, $\tau_k=t_0+(1-\frac{1}{\chi^k})(t_1-t_0)$, $r_k=\frac{1}{2}+\frac{1}{2^{k+1}}$. Then we have
\begin{align*}H(q_{k+1},r_{k+1},\tau_{k+1})&\leq[\frac{C(n,h,A,p)}{(r_k-r_{k+1})^2}(q_k+q_k^{1+\mu})+\frac{1}{t_1-t_0}\frac{\chi}{\chi-1}\chi^k]^\frac{1}{q_k} H(q_k,r_k,\tau_k)\\
&= [C(n,h,A,p)4^{k+1}q_0\chi^k+q_0^{1+\mu}\chi^{k(1+\mu)}+\frac{1}{t_1-t_0}\frac{n+2}{2}\chi^k]^\frac{1}{q_k}\chi^{\frac{k}{q_k}} H(q_k,r_k,\tau_k)\\
&\leq [\frac{C(n,h,A,p,q_0)}{t_1-t_0}]^\frac{1}{q_k}\chi^{\frac{k(1+\mu)}{q_k}} H(q_k,r_k,\tau_k),
\end{align*}
where in the last inequality we use $0<t_0<t_1<T$
By iteration, we get
\begin{align*}
H(q_{m+1},r_{m+1},\tau_{m+1})
&\leq [\frac{C(n,h,A,p,q_0)}{t_1-t_0}]^{\sum_{k=0}^m\frac{1}{q_k}}\chi^{\sum_{k=0}^m\frac{k(1+\mu)}{q_k}} H(q_0,r_0,\tau_0)\\
&\leq C(n,h,A,p,q_0)(\frac{1}{t_1-t_0})^{\frac{n+2}{2q_0}}H(q_0,r_0,\tau_0).
\end{align*}
Letting $m\to\infty$, we get
\[H(p_\infty,r_\infty,\tau_\infty)\leq C(n,h,A,p,q_0)(\frac{1}{t_1-t_0})^{\frac{n+2}{2q_0}}H(q_0,r_0,\tau_0), \forall q_0\geq\frac{p}{2}.\]
Letting $q_0=\frac{p}{2}$, since $r_\infty=\frac{1}{2}$, $\tau_\infty=t_1$, we have
\[H(\infty,\frac{1}{2},t_1)\leq C(n,h,A,p)(\frac{1}{t_1})^{\frac{n+2}{p}}H(\frac{p}{2},1,0),\]
i.e.
\[\sup_{(y,t)\in B^h_{\frac{1}{2}}(x)\times[t_1,T']}f(y,t)\leq C(n,h,A,p)\frac{1}{(t_1-t_0)^{\frac{n+2}{p}}}\bigg(\int_{t_0}^{T'}A^{\frac{p}{2}}dt\bigg)^{\frac{2}{p}}.\]
Letting $t_1\to T'$ and $t_0=T'/2$, we get
\[\sup_{y\in B^h_{\frac{1}{2}}(x)}f(y,T')\leq C(n,h,A,p)\frac{1}{(T')^{\frac{n}{p}}},\forall T'\in(0,T], x\in M.\]
Thus we have
\[|\tilde\nabla g|(t)\leq C(n,h,A,p)/t^{\frac{n}{2p}}, \forall t\in(0,T],\]
thus the lemma is proved.
\end{proof}

Now we can prove that (1)-(3) of Lemma \ref{thm6.2} holds:
\begin{proof}[proof of Lemma \ref{thm6.2}]
The first half of this lemma was given in \cite[Theorem 1.1]{simon2002deformation}, and we only need to prove conclusions (1), (2) and (3). In fact, we can assume that $\hat g$ is smooth without loss of generality. This is because for a $W_{\text{loc}}^{1,p}$ metric $\hat g$, if $\int_{B^h_1(x)}|\tilde\nabla \hat g|^pd\mu_h<A$, then   we can approximate it  by a family of smooth metrics $\hat g_\epsilon$ such that $\int_{B^h_1(x)}|\tilde\nabla \hat g_\epsilon|^pd\mu_h<C(n,h)A$, (see the proof of Lemma \ref{thm3.1}). And then Theorem \ref{theorem2.2} gives the $h$-flow $g_\epsilon(t)$, which is uniformly $C_{\text{loc}}^\infty$ bounded for any $[a,b]\subset (0,T]$. Then by a diagonal subsequent argument and by Arzel\`a-Ascoli theorem, we get the $h$-flow $g(t)$ of initial metric $\hat g$. If we have estimates (1)-(3) in Lemma \ref{thm6.2} for $g_\epsilon(t)$, then (1)-(3) also holds for $g(t)$ by taking limit. Therefore we can assume that $\hat g$ is smooth without loss of generality. 

For any $ x\in M$, we let $\eta_0:\mathbb{R}\to [0,1]$ be a smooth function such that ${\text{supp}}\eta_0\in[0,2)$, $\eta_0\equiv1$ on $[0,1]$ and denote $\eta=\eta_0 \comp d_h(\cdot,x)$, where $d_h$ is the distance function taken with respect to metric $h$. We can assume $|\tilde\nabla \eta|\leq C(h)$, where the derivative is taken with respect to $h$. Denote $f=|\tilde\nabla g_{ij}|^2+1$, and let 
\[\phi(t)=\sup_{x\in M}\int_{B^h_2(x)}f^{\frac{p}{2}}\eta^2d\mu_h.\]

By \cite[Lemma 7.1]{ShiTam} (or \cite[Theorem 5]{McSz}), we know that $g(t)$ is asymptotically flat and thus $f$ tends to $1$ when $x\to\infty$, thus the supremum is attained in some point. We assume that the supremum is attained at $x_0$ for time $t$. And for some other time $t'$, we denote the  correspongding point where the supremum is attained by $x'_0$, then we have $\int_{B^h_{2}(x'_0)}f^{\frac{p}{2}}(t')\eta^2(t') d\mu_h\geq \int_{B^h_{2}(x_0)}f^{\frac{p}{2}}(t')\eta^2(t') d\mu_h$. 
\begin{align}\label{b0}
\phi'_-(t)&=\lim_{t'\to t^-}\frac{\phi(t)-\phi(t')}{t-t'}\notag\\
&=\lim_{t'\to t^-}\frac{\int_{B^h_2(x_0)}f^\frac{p}{2}(t)\eta^2(t) d\mu_h-\int_{B^h_2(x'_0)}f^\frac{p}{2}(t')\eta^2(t') d\mu_h}{t-t'}\notag\\
&\leq\lim_{t'\to t^-}\frac{\int_{B^h_2(x_0)}f^\frac{p}{2}(t)\eta^2(t) d\mu_h-\int_{B^h_2(x_0)}f^\frac{p}{2}(t')\eta^2(t') d\mu_h}{t-t'}\notag\\
&=\frac{\partial}{\partial t}\int_{B^h_2(x_0)}f^\frac{p}{2}\eta^2 d\mu_h,
\end{align}
where $\phi'_-(t)$ is the left derivative of $\phi(t)$.

Take $q=p/2-1$, $r=1$ and $R=2$ in \eqref{aa0}, we have
\begin{align}\label{b1}
\phi'_-(t)\leq C(n,h,p)
\int_{B^h_2(x)}f^{p/2}d\mu_h+C(n,h,p)\int_{B^h_2(x)}f^{p/2}v\eta^2d\mu_h,
\end{align}
where $v=1+f$.

Let
\[\mathcal{T}=\bigg\{T\in(0,1]\big|\int_{B^h_1(x)}|\tilde\nabla g(t)|^pd\hat g\leq10A,\forall x\in M, \forall t\in[0,T]\bigg\}.\]

Since we have assumed that $\hat g$ is smooth, thus $g(t)$ converges to $\hat g$ in $C^\infty$ norm on any compact subset of $M$ as $t$ tends to $0^+$, thus $\mathcal{T}\neq\emptyset$. We denote $T_{\max}=\sup\mathcal{T}$.
By Lemma \ref{lemma3}, we have $v\le C(n,h,A,p)/t^{n/p}$ for any $t\in(0,T_{\max}]$, thus we have
\begin{equation}
\phi'_-(t)\leq C(n,h,p)\int_{B^h_2(x)}f^{p/2}d\mu_h+\frac{C(n,h,A,p)}{t^{n/p}}\int_{B^h_2(x)}f^{p/2}\eta^2d\mu_h, \forall t\in(0,T_{\max}].
\end{equation}

We can choose $N$ balls with radius $1$ to cover $B^h_1(x_0)$, where $N$ only depends on $n$ and $h$. The integral of $f^{p/2}$ on each of these balls is less than $\phi(t)$, thus we get
\begin{equation*}
\phi'_-(t)\leq C(n,h,p)\phi(t)+\frac{C(n,h,A,p)}{t^{n/p}}\phi(t),\quad \forall t\in(0,T_{\max}].
\end{equation*}

Since $T_{\max}\leq 1$,
we have
\[\phi'_-(t)\leq \frac{C(n,h,A,p)}{t^{n/p}}\phi(t),\quad \forall t\in(0,T_{\max}],\]

Thus we have
\[(\log\phi(t))'_-\leq\frac {C(n,h,A,p)}{t^{n/p}},\quad \forall t\in(0,T_{\max}].\]

Since $\phi$ is Lipschitz, we get
\[\log\phi(t)\leq \log\phi(0)+C(n,h,A,p)t^{1-n/p},\quad \forall t\in(0,T_{\max}],\]
and thus
\[\phi(t)\leq\phi(0)e^{C(n,h,A,p)t^{1-n/p}},\quad \forall t\in(0,T_{\max}].\]

Note $\phi(0)\leq C(n,h)A$, thus 
\begin{equation}\label{21}\int_{B^h_1(x)}|\tilde\nabla g(t)|^pd\mu_h\leq \phi(t)\leq C(n,h)e^{C(n,h,A,p)t^{1-n/p}}A,\quad \forall t\in(0,T_{\max}].
\end{equation}

Suppose that if we fix $n$, $h$, $p$ and $A$, there exists a sequence of initial metric $\{\hat g_m\}_{m=1}^\infty$, such that each of the metric $\hat g_m$ satisfies the condition of the theorem and the corresponding $T_{\max,m}$ tends to $0$. But by equation (\ref{21}) we know that if $T_{\max}$ satisfies $C(n,h)e^{C(n,h,A,p)T_{\max}^{1-n/p}}\leq 5$, then the maximal time interval $(0,T_{\max}]$ could be extended to $(0,T_{\max}+\delta]$, for some $\delta>0$ small enough, which is a contradiction. Hence $T_{\max}$ is only depend on $n$, $h$, $p$ and $A$. Thus we get the conclusion (1) and (2).

To prove (3), recall that Simon's result, Theorem \ref{theorem2.2}, gives
\[\sup_{M}|\tilde\nabla^ig|\leq\frac{c_i(n,h)}{t^{i/2}},\quad\forall t\in(0,T].\]

Recall Gagliardo-Nirenberg interpolation inequality \cite[section 6.8]{gilbarg2015elliptic}
\[\sup_\Omega |\partial u|\leq2|\sup_\Omega u|^{\frac{1}{2}}|\sup_\Omega \partial^2u|^{\frac{1}{2}},\forall \text{open subset }{}\Omega\in\mathbb{R}^n, u\in C^2(\Omega). \]

Since we can choose a finite atlas on $(M,h)$ such that $h$ is uniformly equivalent to the Euclidean metric of each chart, the Gagliardo-Nirenberg interpolation inequality can be passed to $(M,h)$ with the coefficient $2$ replaced by some $C(h).$
Hence using (2) we get
\[|\tilde\nabla^2 g|(t)\leq C(n,h,A,p)\sup_M|\tilde\nabla g(t)|^{\frac{1}{2}}\sup_M|\tilde\nabla^3g(t)|^{\frac{1}{2}}
\leq \frac{C(n,h,A,p)}{t^{\frac{n}{4p}+\frac{3}{4}}},\]
thus the lemma is proved.
\end{proof}

\subsection{Some geometric quantity along the flow}
In this subsection we study the ADM mass and the scalar curvature along the flow.

For the asymptotically flatness and the ADM mass, we have the following result from \cite{ShiTam}:
\begin{lemma}\label{ST1}
\cite[Lemma 7.1]{ShiTam}
Let $(M,\hat g)$ be assumed as in Theorem \ref{thm6.2},  $\hat g_\epsilon$ be the mollification of $\hat g$ given in Lemma \ref{thm3.1}. Let $g_\epsilon(t)$ and $g(t)$ be the $h$-flow of initial metric $\hat g_\epsilon$ and $\hat g$. Then 
\begin{enumerate}
\item $g_\epsilon(t)$ and $g(t)$ are asymptotically flat
\item $m(\hat g)=m(\hat g_\epsilon)=m(g_\epsilon(t)$, where $m(\hat g), m(\hat g_\epsilon)$ and $m(g_\epsilon(t))$ are the ADM mass of $\hat g$, $\hat g_\epsilon$ and $g_\epsilon(t)$.
	\end{enumerate}
\end{lemma}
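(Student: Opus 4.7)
The plan is to dispose of the first mass equality by a compact-support observation, establish asymptotic flatness of the approximating flows through a maximum principle at infinity, and finally compute the time derivative of the ADM mass along the smooth flow to see that it vanishes.

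The equality $m(\hat g) = m(\hat g_\epsilon)$ is immediate from Lemma \ref{thm3.1}(1): the mollification leaves $\hat g$ unchanged on $M\setminus S_{2\epsilon}$, and $S_{2\epsilon}$ is compact, so $\hat g_\epsilon$ and $\hat g$ share the same asymptotic expansion at infinity and hence the same ADM mass.

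For asymptotic flatness of $g_\epsilon(t)$, the key is to control $g_\epsilon(t) - h$ at infinity uniformly in $t$. In asymptotically flat coordinates, the $h$-flow is a quasi-linear parabolic system whose principal part $g^{kl}\tilde\nabla_k\tilde\nabla_l g_{ij}$ is uniformly elliptic thanks to the $(1+\epsilon(n))$-fairness of $h$. Since $\hat g_\epsilon - h = O(|x|^{2-n})$ (as $\hat g_\epsilon = \hat g$ outside a compact set), I would compare $|g_\epsilon(t) - h|$ against a barrier of the form $C|x|^{2-n}$ on the exterior of a large coordinate ball, using a maximum principle adapted to non-compact asymptotically flat manifolds to propagate the decay for all $t \in [0,T]$. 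The interior bounds on $\tilde\nabla g_\epsilon(t)$ and $\tilde\nabla^2 g_\epsilon(t)$ from Lemma \ref{thm6.2}, combined with parabolic Schauder theory near infinity (where the coefficients approach Euclidean ones), then upgrade this $C^0$-decay to matching weighted decay of the first two derivatives.

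For $m(g_\epsilon(t)) = m(\hat g_\epsilon)$, smoothness of $\hat g_\epsilon$ ensures that $g_\epsilon(t)$ is smooth up to $t = 0$, so one may differentiate the ADM integral:
\begin{equation*}
\frac{d}{dt}\, m(g_\epsilon(t)) = c_n \lim_{R\to\infty} \int_{S_R} \partial_t\bigl[g^{ij}(\partial_i g_{jk} - \partial_k g_{ij})\bigr]\nu^k\, d\sigma_R.
\end{equation*}
Substituting $\partial_t g_{ij} = -2 R_{ij} + \tilde\nabla_i W_j + \tilde\nabla_j W_i$, each term in the integrand decays like $O(|x|^{-n})$ by the previous step, so the boundary integral is $O(R^{-1})$ and vanishes in the limit; hence $m(g_\epsilon(t))$ is constant in $t$. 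Asymptotic flatness of $g(t)$ then follows from the $C^\infty_{\rm loc}$-convergence $g_{\epsilon_k}(t) \to g(t)$ supplied by Lemma \ref{thm6.2} together with the uniform weighted bounds passed to the limit. The main obstacle is the decay step: Lemma \ref{thm6.2} only yields interior gradient and Hessian control, whereas one needs global-in-space decay that is uniform in $t$, requiring a careful setup of the maximum principle on the non-compact manifold in which the $|x|^{2-n}$ barrier dominates the lower-order contributions of the $h$-flow equation.
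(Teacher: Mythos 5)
The paper does not prove this lemma: it simply cites \cite[Lemma 7.1]{ShiTam} and then observes, in the remark that immediately follows, that the hypotheses needed for Shi--Tam's Lemma 7.1 (asymptotic flatness plus $\hat g\in W^{1,p}_{\rm loc}\cap C^\infty(M\setminus S)$) do not involve the dimensional condition on the singular set, so the cited result applies verbatim. You, by contrast, have sketched a direct re-derivation of that cited lemma. Your outline is the right one and matches the strategy underlying Shi--Tam's proof: $m(\hat g)=m(\hat g_\epsilon)$ is immediate because the mollification has compact support; asymptotic flatness is preserved by propagating weighted decay via a barrier/maximum-principle argument on the exterior region; and the mass is then seen to be $t$-invariant by differentiating the ADM surface integral and using the decay of $\partial_t g_\epsilon(t)=-2\Ric+\tilde\nabla W+\tilde\nabla W$ to kill the boundary term as $R\to\infty$, combined with smoothness of $g_\epsilon(t)$ up to $t=0$ (since $\hat g_\epsilon$ is smooth) to get continuity at $t=0$.

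The one caveat is that the heart of the argument, the global-in-space, uniform-in-$t$ decay estimate, is only described as a plan rather than carried out, and you flag this yourself. Two smaller points worth tightening if you were to complete it: the barrier should match the actual decay order $\tau>(n-2)/2$ of the asymptotic flatness hypothesis rather than the full rate $|x|^{2-n}$ (you only know $\hat g_\epsilon-h=O(|x|^{-\tau})$ in general), and the interchange of $\partial_t$ with $\lim_{R\to\infty}$ in the ADM integral requires the decay of $\partial_t g_\epsilon$ and its first spatial derivatives to be uniform in $t$ on $[t_0,T]$, which is exactly what the barrier/parabolic-Schauder step must deliver. The paper avoids all of this technical work by invoking the cited reference; your version would need to carry out what \cite{ShiTam} already did.
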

\begin{remark}
In \cite{ShiTam}, the dimensional condition on the singular set is only used in theire Lemma 4.5, in order to prove the nonnegativity of scalar curvature of $g(t)$. For their Lemma 7.1, the condition that  $\hat g\in W_{\rm{loc}}^{1,p}(M)(p<n\le +\infty)\cap C^\infty(M\setminus S)$ for any compact $S\subset M$ and $\hat g$ is asymptotically flat is sufficient.
\end{remark}

For the scalar curvature, we have
\begin{lemma}\label{NNSC}
Let $(M,\hat g)$ be assumed as the same as $(M,g)$ in Theorem \ref{PMT}, and let $g(t)$ be the $h$-flow with initial metric $\hat g$, then $g(t)$ has nonnegative scalar curvature.
\end{lemma}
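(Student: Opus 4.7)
The plan is to approximate $\hat g$ by smooth metrics and show that the Ricci-flow pullback of the $h$-flow has scalar curvature bounded below by a heat-equation solution whose initial datum $(R_{\hat g_\epsilon})_-$ tends to zero in $L^1$. This follows the strategy of \cite{McSz} and \cite{ShiTam}, adapted to the mixed singular set of Theorem \ref{PMT}.

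First, apply Lemma \ref{thm3.1} to produce smooth metrics $\hat g_\epsilon \to \hat g$ in $C^0$, with uniform local $W^{1,p}$ bounds and, crucially, $\lowlim_{\epsilon \to 0^+} \int_M (R_{\hat g_\epsilon})_- \, d\mu_h = 0$. Since $\hat g_\epsilon = \hat g$ outside a fixed compact neighborhood of $S$ and $\hat g$ is smooth with $R \ge 0$ there, the negative part $(R_{\hat g_\epsilon})_-$ is supported in a single compact set independent of $\epsilon$. Let $g_\epsilon(t)$ denote the $h$-flow of $\hat g_\epsilon$ provided by Theorem \ref{theorem2.2}. By Lemma \ref{thm6.2}, the family $\{g_\epsilon(t)\}$ satisfies uniform $C^k_{\text{loc}}$ estimates on $M \times [\tau, T]$ for each $\tau > 0$, so along a diagonal subsequence $g_\epsilon(t) \to g(t)$ smoothly on compact subsets of $M \times (0, T]$, and in particular $R(g_\epsilon(t)) \to R(g(t))$ pointwise. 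It therefore suffices to prove $R(g_\epsilon(t)) \ge -\delta_\epsilon$ with $\delta_\epsilon \to 0$.

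Next, convert each $h$-flow to a genuine Ricci flow via Remark \ref{rmk1}: set $\tilde g_\epsilon(t) = (\Psi_t^\epsilon)^* g_\epsilon(t)$, so that scalar curvature, being a diffeomorphism invariant, satisfies $R_{\tilde g_\epsilon(t)}(x) = R_{g_\epsilon(t)}(\Psi_t^\epsilon x)$, and it suffices to control $R_{\tilde g_\epsilon(t)}$. Under Ricci flow, $\partial_t R = \Delta R + 2|\Ric|^2 \ge \Delta R$, so $R_{\tilde g_\epsilon}$ is a supersolution of the heat equation. Let $u_\epsilon(x,t)$ solve the forward heat equation $\partial_t u = \Delta_{\tilde g_\epsilon(t)} u$ with initial datum $u_\epsilon(\cdot, 0) = (R_{\hat g_\epsilon})_- \ge 0$, which is bounded and compactly supported. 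The parabolic maximum principle then yields $R_{\tilde g_\epsilon}(t) + u_\epsilon(t) \ge 0$; applicability on the non-compact $M$ is guaranteed by the uniform bounded geometry from Lemma \ref{thm6.2} together with the compact support and boundedness of the initial datum.

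Finally, the uniform $C^k$ control from Lemma \ref{thm6.2} combined with asymptotic flatness gives uniform Gaussian upper bounds on the heat kernel of $\Delta_{\tilde g_\epsilon(t)}$, yielding $\|u_\epsilon(\cdot, t)\|_{L^\infty} \le C(n,h)\, t^{-n/2}\, \|(R_{\hat g_\epsilon})_-\|_{L^1(\mu_h)} \to 0$ as $\epsilon \to 0$. Hence $R_{\tilde g_\epsilon}(t) \ge -o_\epsilon(1)$ uniformly on $M$, and passing to the limit along the subsequence gives $R(g(t)) \ge 0$. The main technical obstacle is establishing the heat-kernel estimate and justifying the maximum principle uniformly in $\epsilon$ on a non-compact manifold with a time-dependent background, but both steps are standard once the uniform bounded geometry of Lemma \ref{thm6.2} is in hand; the compact support of $(R_{\hat g_\epsilon})_-$ removes any complications from the asymptotic behavior of the initial datum.
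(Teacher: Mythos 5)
Your proposal is correct in outline and shares the same skeleton as the paper's argument: approximate by Lemma \ref{thm3.1}, pull the $h$-flow back to a genuine Ricci flow via $\Psi_{\epsilon,t}$, use that scalar curvature is a supersolution of the time-dependent heat equation, and exploit the $L^1$ smallness of $(R_{\hat g_\epsilon})_-$ together with $C^\infty_{\mathrm{loc}}$ convergence $g_\epsilon(t)\to g(t)$ at fixed $t>0$. Where you genuinely diverge is in the mechanism used to propagate that $L^1$ smallness forward in time. The paper avoids any heat-kernel theory: it regularizes $R_-$ by $v_\delta=\sqrt{R^2+\delta^2}-R$, forms the localized sup-quantity $\phi(t)=\sup_x\int v_\delta\,\varphi_x\,d\mu_{g_\epsilon(t)}$, computes a one-sided derivative $\phi'_-(t)\le C t^{-q}\phi(t)$ with $q<1$ by hand from the Shi/Simon derivative bounds of Lemma \ref{thm6.2}, and closes with Gr\"onwall; the conclusion is only a local $L^1$ bound on $(R_{g_\epsilon(t)})_-$, which together with $C^\infty_{\mathrm{loc}}$ convergence suffices. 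You instead compare $R_{\tilde g_\epsilon}$ directly to a caloric function $u_\epsilon$ with initial datum $(R_{\hat g_\epsilon})_-$ by the parabolic maximum principle, and invoke an $L^1\to L^\infty$ smoothing bound $\|u_\epsilon(\cdot,t)\|_{L^\infty}\lesssim t^{-n/2}\|(R_{\hat g_\epsilon})_-\|_{L^1}$, obtaining $L^\infty$ smallness. This is cleaner conceptually (no $\delta$-regularization, no sup-over-basepoint left-derivative bookkeeping, pointwise rather than $L^1$ control of the approximants), but the heat-kernel bound, uniform in $\epsilon$, for a time-dependent operator $\Delta_{\tilde g_\epsilon(t)}$ whose curvature and connection blow up like integrable powers of $1/t$ near $t=0^+$, is precisely the technical content the paper's Gr\"onwall computation replaces. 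It can be justified via Nash--Moser iteration using the uniform equivalence of all $\tilde g_\epsilon(t)$ to $h$ and the integrability of $\|R_{\tilde g_\epsilon(t)}\|_\infty$ in $t$, but it is not an off-the-shelf statement; in particular the constant should depend on $A$ and $p$ (through the curvature time-integral) and not merely $(n,h)$, and the maximum-principle step needs the bounded-geometry justification you sketch (available for each fixed $\epsilon$ since $\hat g_\epsilon$ is smooth and asymptotically flat). With those two points spelled out, your route closes and is a legitimate alternative to the paper's.
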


We use techniques developed in Subsection 2.1 and Subsection 2.2 to prove Lemma \ref{NNSC}, firstly we prove the following lemma:
\begin{lemma}\label{m}
Let $M, \hat g, \hat g_\epsilon, T, g_\epsilon(t), g(t)$ be as in Lemma \ref{ST1}, then 
\[\lowlim_{\epsilon\to0^+}\int_{B^h_{1/2}(x)} \left|(R_{g_\epsilon(t)})_-\right|d\mu_{g_\epsilon(t)}=0,\forall t\in(0,T],\forall x\in M,\]
where $R_{g_\epsilon(t)}$ is the scalar curvature of $g_\epsilon(t)$, $(R_{g_\epsilon(t)})_-$ is its negative part, $d\mu_{g_\epsilon(t)}$ is the volume form of $g_\epsilon(t)$ and the ball is taken with respect to $h$.
\end{lemma}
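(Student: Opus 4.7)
The plan is to derive a uniform-in-$\epsilon$ global $L^1$ Gronwall estimate for $u:=(R_{g_\epsilon(t)})_-$ along the $h$-flow, and then conclude by the initial control $\lowlim_{\epsilon\to 0^+}\int_M(R_{\hat g_\epsilon})_-\,d\mu_h=0$ supplied by Lemma \ref{thm3.1}(4). Since $\hat g_\epsilon$ is smooth, so is $g_\epsilon(t)$ on $M\times[0,T]$, and the standard Ricci--DeTurck formula gives
\begin{align*}
\partial_t R_{g_\epsilon(t)}=\Delta_{g_\epsilon(t)}R_{g_\epsilon(t)}+2|{\rm Ric}_{g_\epsilon(t)}|^2+W^k\nabla_k R_{g_\epsilon(t)}.
\end{align*}
Applying Kato's inequality to $u$ and discarding the favorable $2|{\rm Ric}|^2$ term yields, in the distributional sense, $(\partial_t-\Delta_{g_\epsilon(t)}-W^k\nabla_k)u\le 0$.

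I would then pair this subsolution inequality with a smooth radial cutoff $\eta_R\equiv 1$ on $B^h_R(x_0)$, supported in $B^h_{2R}(x_0)$, and integrate against $\eta_R^2\,d\mu_{g_\epsilon(t)}$. Using $\partial_t d\mu_{g_\epsilon(t)}=(-R_{g_\epsilon(t)}+\nabla_k W^k)d\mu_{g_\epsilon(t)}$ and integrating by parts twice in both the $\Delta$ and the $W^k\nabla_k$ terms (the $\eta_R^2\,\nabla_kW^k$ contributions cancel), one obtains after a short computation
\begin{align*}
\frac{d}{dt}\int\eta_R^2\,u\,d\mu_{g_\epsilon(t)}\le \sup_M|R_{g_\epsilon(t)}|\int\eta_R^2\,u\,d\mu_{g_\epsilon(t)}+\frac{C(h)(1+\sup_M|W(t)|)}{R}\int_{B^h_{2R}(x_0)}u\,d\mu_{g_\epsilon(t)},
\end{align*}
where the $1/R$ factor comes from $|\tilde\nabla\eta_R|,|\tilde\nabla^2\eta_R|\le C/R,C/R^2$, and where we used that $-\int\eta_R^2uR\,d\mu=\int\eta_R^2u^2\,d\mu\le\sup|R|\int\eta_R^2u\,d\mu$. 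Since $g_\epsilon(t)$ is asymptotically flat by Lemma \ref{ST1}, with uniform-in-$\epsilon$ decay of $R_{g_\epsilon(t)}$ at spatial infinity (note that $\hat g_\epsilon=\hat g$ outside a fixed compact set, so no decay is lost in $\epsilon$), the second term vanishes when $R\to\infty$, leaving the global inequality
\begin{align*}
\frac{d}{dt}\int_M u\,d\mu_{g_\epsilon(t)}\le \sup_M|R_{g_\epsilon(t)}|\int_M u\,d\mu_{g_\epsilon(t)}.
\end{align*}

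Lemma \ref{thm6.2} then supplies $\sup_M|R_{g_\epsilon(t)}|\lesssim|\tilde\nabla^2 g_\epsilon(t)|+|\tilde\nabla g_\epsilon(t)|^2\le C(t^{-n/(4p)-3/4}+t^{-n/p})$; since $p>n$, both exponents lie strictly inside $(0,1)$, so this coefficient belongs to $L^1(0,T)$ with bounds depending only on $n,h,A,p$. Gronwall therefore gives
\begin{align*}
\int_M u(T')\,d\mu_{g_\epsilon(T')}\le C(n,h,A,p,T)\int_M(R_{\hat g_\epsilon})_-\,d\mu_{\hat g_\epsilon},\quad\forall T'\in(0,T];
\end{align*}
restricting the left side to $B^h_{1/2}(x)$, taking $\lowlim_{\epsilon\to 0^+}$, and using the uniform $C^0$-comparability $d\mu_{\hat g_\epsilon}\asymp d\mu_h$ together with Lemma \ref{thm3.1}(4) finishes the proof. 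The main obstacle, which this argument circumvents by going global, is closing the Gronwall loop purely locally: the coupling term from the cutoff would then involve a strictly larger ball and cannot be absorbed without either a covering/iteration scheme or a priori decay of $u$ that we do not have at a fixed scale.
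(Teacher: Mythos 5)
Your approach is genuinely different from the paper's: you derive a global $L^1$ Gronwall inequality for $u=(R_{g_\epsilon(t)})_-$ directly along the $h$-flow, whereas the paper (i) regularizes with $v_\delta=\sqrt{R^2+\delta^2}-R$ rather than working with $R_-$ and Kato's inequality, (ii) pulls back by $\Psi_{\epsilon,t}$ to a genuine Ricci flow so that it can cite McFeron--Sz\'ekelyhidi's subsolution lemma directly, and most importantly (iii) replaces the global $L^1$ norm by the supremum over $x$ of local weighted integrals $\phi(t)=\sup_{x}\int_M v\,\varphi_x\,d\mu_{g_\epsilon(t)}$. The virtue of the paper's sup-of-locals quantity is precisely that it sidesteps the question of whether $R_{g_\epsilon(t)}\in L^1(M)$ uniformly and whether the annular boundary term you produce really vanishes as $R\to\infty$; the $\delta$-shift guarantees $v\to\delta$ at infinity so the supremum is either attained in a compact set or trivially $\le 2\delta$. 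Your version is conceptually cleaner (no pullback, no $\delta$), and your handling of the ${\rm div}\,W$ cancellation and your verification that the Gronwall coefficient $\sup_M|R_{g_\epsilon(t)}|\lesssim t^{-n/(4p)-3/4}+t^{-n/p}$ lies in $L^1(0,T)$ since $p>n$ are both correct and match the exponents used in the paper.

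The one place you should not wave your hands is the justification of the global integrability that your $R\to\infty$ limit requires. Your parenthetical reasoning (``$\hat g_\epsilon=\hat g$ outside a fixed compact set, so no decay is lost in $\epsilon$'') applies only to the initial data $\hat g_\epsilon$; the $h$-flow acts globally, and $g_\epsilon(t)$ does not coincide with $\hat g$ or $\hat g_\epsilon$ outside a compact set for $t>0$. The decay of $R_{g_\epsilon(t)}$ at spatial infinity needed to kill the $R^{-1}\int_{B_{2R}\setminus B_R}u\,d\mu$ term, and to make sense of $\int_M u\,d\mu$ and the $t_0\to0^+$ limit, must come from the parabolic decay estimates for asymptotically flat $h$-flows --- this is the content of Shi--Tam's Lemma 7.1 (the paper's Lemma \ref{ST1}), not of the compact-support property of the mollification. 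With that reference put in the right place, and with a brief remark that $(R)_-$ is the max of the two subsolutions $-R$ and $0$ and hence a distributional subsolution of the drift-heat operator (replacing the informal appeal to ``Kato''), your argument closes; alternatively you could adopt the paper's $\delta$-regularization, which packages that regularity point more cleanly at the cost of an extra limit.
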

\begin{remark}
Note that by taking limit, 
Lemma \ref{m} tells us that $R_{g(t)}$ is nonnegative.
\end{remark}
\begin{proof}
For any $\delta>0$, denote 
\[v=\sqrt{R_{g_\epsilon(t)}^2+\delta^2}-R_{g_\epsilon(t)}.\]

In this prove we will use the evolution equation along Ricci flow.  Let $\Psi_{\epsilon,t}$ be the family of diffeomorphisms in Remark \ref{rmk1} such that $\Psi_{\epsilon,0}$ are the identity map and $\Psi_{\epsilon,t}^*g_\epsilon(t)$ are Ricci flow. 
then
$\Psi_{\epsilon,t}^*v=v\circ\Psi_{\epsilon,t}=\sqrt{R_{\Psi_{\epsilon,t}^*g_\epsilon(t)}^2+\delta^2}-R_{\Psi_{\epsilon,t}^*g_\epsilon(t)}$ and 
 $\Psi_{\epsilon,t}^*v\to 2(R_{\Psi_{\epsilon,t}^*g_\epsilon(t)})_-$ as $\delta\to 0^+$. And as is shown in the proof of \cite[Lemma 13]{McSz}, (in \cite[Lemma 13]{McSz}, their $g(t)$ is the Ricci flow ,which could be seen as our $\Psi_{\epsilon,t}^*g_\epsilon(t)$ in fact), $\Psi_{\epsilon,t}^*v$ satisfies
\[\frac{\partial \Psi_{\epsilon,t}^*v}{\partial t}\leq \Delta_{\Psi_{\epsilon,t}^*g_\epsilon(t)} \Psi_{\epsilon,t}^*v.\]

For any $ x\in M$, we let $\varphi_0:\mathbb{R}\to [0,1]$ be a smooth function such that ${\text{supp}}\varphi_0\in[0,1)$, $\varphi_0\equiv1$ on $[0,\frac{1}{2}]$ and denote $\varphi_x=\varphi_0 \comp d_h(\cdot,x)$, where $d_h$ is the distance function taken with respect to metric $h$. Then ${\text{supp}}\varphi_x\subset B^h_1(x)$, where $B^h$ is the geodesic ball taken with respect to $h$. $|\tilde\nabla \varphi_x|\le C(n,h) $, and by Lemma \ref{thm6.2}, we have $\Delta_{g_\epsilon(t)}\varphi_x\leq C(n,h,A,p)/t^{n/2p}$, where $C(n,h,A,p)$ is independent of $x$, $\epsilon$ and $t$ and can vary from line to line.

Note that by Lemma \ref{theorem2.2}, $h$ is $(1+\epsilon(n))$-fair to $g_\epsilon(t)(t\in[0,T])$, thus 
\begin{align}\label{B1}
B^h_1(x)\subset B^{g_\epsilon(t)}_{C_1(n)}(x),
\end{align}
 where $B^h$ and $B^{g_\epsilon(t)}$ are the geodesic ball of $h$ and $g_\epsilon(t)$ respectively, $C_1$ is a positive constant which is independent of $\epsilon$ and $t$. 

 And the definition of pullback gives 
 \begin{align}\label{B2}
 \Psi_{\epsilon,t}^{-1}(B^{g_\epsilon(t)}_{C_1(n)}(x))=B^{\Psi_{\epsilon,t}^*g_\epsilon(t)}_{C_1(n)}(\Psi_{\epsilon,t}^{-1}(x)).
 \end{align} 

To estimates the ball after pullback, note that Lemma \ref{thm6.2} gives 

\begin{align}
|\text{Rm}_{g_\epsilon(t)}|\le C(n,h,A,p)/t^{\frac{n}{4p}+\frac{3}{4}},\forall t\in(0,T].
\end{align}

And since $h$ is $(1+\epsilon(n))$-fair to $g_\epsilon(t)(t\in[0,T])$, we have 
\begin{align}
|\text{Rm}_{g_\epsilon(t)}|_{g_\epsilon(t)}\le C(n,h,A,p)/t^{\frac{n}{4p}+\frac{3}{4}},\forall t\in(0,T].
\end{align}
 where $|\cdot|_{g_\epsilon(t)}$ is the norm taken with respect to $g_\epsilon(t)$. 

 And applying the pullback we have 
 \begin{align}\label{1ot}
 |\text{Rm}_{\Psi_{\epsilon,t}^*g_\epsilon(t)}|_{\Psi_{\epsilon,t}^*g_\epsilon(t)}\le C(n,h,A,p)/t^{\frac{n}{4p}+\frac{3}{4}},\forall t\in(0,T].
 \end{align}

  Since $1/t^{\frac{n}{4p}+\frac{3}{4}}$ is integrable on $[0,T]$, using the Ricci flow equation, \eqref{1ot} implies that 
  \begin{align}
C^{-1}(n,h,A,p)\hat g_\epsilon\le \Psi_{\epsilon,t}^*g_\epsilon(t)\le C(n,h,A,p)\hat g_\epsilon.
\end{align}

Since $C^{-1}(n)h\le\hat g_\epsilon\le C(n)h$ for $\epsilon$ small enough, we have
 \begin{align}
C^{-1}(n,h,A,p)h\le \Psi_{\epsilon,t}^*g_\epsilon(t)\le C(n,h,A,p)h,
\end{align}
which implies 
\begin{align}\label{B3}
 B^{\Psi_{\epsilon,t}^*g_\epsilon(t)}_{C_1(n)}(\Psi_{\epsilon,t}^{-1}(x))\subset B^h_{C_2(n,h,A,p)}(\Psi_{\epsilon,t}^{-1}(x)),
 \end{align} 
where $C_2$ is a positve constant which is independent of $\epsilon$ and $t$.

Combine \eqref{B1}, \eqref{B2} and \eqref{B3}, we have ${\text{supp}}\Psi_{\epsilon,t}^*\varphi_x\subset \Psi_{\epsilon,t}^{-1}(B^h_1(x) \subset  B^h_{C_2(n,h,A,p)}(\Psi_{\epsilon,t}^{-1}(x))$. Similarly, we also have $\Psi_{\epsilon,t}^*\varphi_x=1$ on $B^h_{C_3(n,h,A,p)}(\Psi_{\epsilon,t}^{-1}(x))$ for some positive constant $C_3$ which is independent of $\epsilon$ and $t.$

Let
\begin{align}\label{Dphi}
\phi(t)=\sup_{x\in M}\int_Mv\varphi_x d\mu_{g_\epsilon(t)}=\sup_{x\in M}\int_M\Psi_{\epsilon,t}^*v\Psi_{\epsilon,t}^*\varphi_x d\mu_{\Psi_{\epsilon,t}^*g_\epsilon(t)}, t \in [0,T].
 \end{align}

Since by Lemma \ref{ST1} we have $ g_\epsilon(t)$ is asymptotically flat, there exists a compact $K_{\epsilon,t}$, such that the integral $\int_M v \varphi_x d\mu_{ g_\epsilon(t)}=\int_M\Psi_{\epsilon,t}^*v\Psi_{\epsilon,t}^*\varphi_x d\mu_{\Psi_{\epsilon,t}^*g_\epsilon(t)}$ is bounded by $\delta$ for $x$ belongs to $M\backslash K_{\epsilon,t}$, so the supremum is well defined.

Claim: there exists a constant $C=C(n,h,A,p)>0$ independ of $\delta$, $\epsilon$ and $t$, such that
\[\phi(t)\leq C(n,h,A,p)(\phi(0)+2\delta)\]

In fact, if the supremum could not be attained in $K_{\epsilon,t}$ for some $t$, then $\phi(t)\leq \delta<2\delta$ and the claim holds.

Otherwise, suppose that the supremum is attained at some $x_0\in K_{\epsilon,t}$ for time $t$, and suppose that the maximum value is greater than $2\delta$, then for any other time $t'$ close enough to $t$, the supremum is greater than $\delta$, hence there is also a correspongding point where the supremum is attained, which we denote by $x'_0$, then we have $\int_M\Psi_{\epsilon,t}^*v(t')\Psi_{\epsilon,t}^*\varphi_{x'_0}(t') d\mu_{\Psi_{\epsilon,t}^*g_\epsilon(t')}\geq \int_M\Psi_{\epsilon,t}^*v(t')\Psi_{\epsilon,t}^*\varphi_{x_0}(t') d\mu_{\Psi_{\epsilon,t}^*g_\epsilon(t')}$. Hence
\begin{align*}
\phi'_-(t)&=\lim_{t'\to t^-}\frac{\phi(t)-\phi(t')}{t-t'}\\
&=\lim_{t'\to t^-}\frac{\int_M\Psi_{\epsilon,t}^*v(t)\Psi_{\epsilon,t}^*\varphi_{x'_0}(t) d\mu_{\Psi_{\epsilon,t}^*g_\epsilon(t)}-\int_M\Psi_{\epsilon,t}^*v(t')\Psi_{\epsilon,t}^*\varphi_{x'_0}(t') d\mu_{g_\epsilon(t')}}{t-t'}\\
&\leq\lim_{t'\to t^-}\frac{\int_M\Psi_{\epsilon,t}^*v(t)\Psi_{\epsilon,t}^*\varphi_{x'_0}(t) d\mu_{\Psi_{\epsilon,t}^*g_\epsilon(t)}-\int_M\Psi_{\epsilon,t}^*v(t')\Psi_{\epsilon,t}^*\varphi_{x'_0}(t') d\mu_{g_\epsilon(t')}}{t-t'}\\
&=\int_M\frac{\partial}{\partial t}\left(\Psi_{\epsilon,t}^*v\Psi_{\epsilon,t}^*\varphi_{x'_0} d\mu_{\Psi_{\epsilon,t}^*g_\epsilon(t)}\right),
\end{align*}
where $\phi'_-(t)$ is the left derivative of $\phi(t)$.
Using the standard evolution equation for Ricci flow, $\frac{\partial }{\partial t}d\mu_{\Psi_{\epsilon,t}^*g_\epsilon(t)}=-R_{\Psi_{\epsilon,t}^*g_\epsilon(t)}d\mu_{\Psi_{\epsilon,t}^*g_\epsilon(t)}$, where $R_{\Psi_{\epsilon,t}^*g_\epsilon(t)}$ is the scalar curvature of $\Psi_{\epsilon,t}^*g_\epsilon(t)$, we have
\begin{align}\label{R1}
\phi'_-(t)&\leq\int_M\frac{\partial}{\partial t}\Psi_{\epsilon,t}^*v\Psi_{\epsilon,t}^*\varphi_{x_0}-\Psi_{\epsilon,t}^*v\Psi_{\epsilon,t}^*\varphi_{x_0} R_{\Psi_{\epsilon,t}^*g_\epsilon(t)}d\mu_{\Psi_{\epsilon,t}^*g_\epsilon(t)}\notag\\
&\leq\int_M\left(\Delta_{\Psi_{\epsilon,t}^*g_\epsilon(t)} \Psi_{\epsilon,t}^*v\Psi_{\epsilon,t}^*\varphi_{x_0}+\Psi_{\epsilon,t}^*v\frac{\partial}{\partial t}\Psi_{\epsilon,t}^*\varphi_{x_0}    -\Psi_{\epsilon,t}^*v\Psi_{\epsilon,t}^*\varphi_{x_0} R_{\Psi_{\epsilon,t}^*g_\epsilon(t)}\right)d\mu_{\Psi_{\epsilon,t}^*g_\epsilon(t)}\notag\\
&=\int_M\left(\Psi_{\epsilon,t}^*v\Delta_{\Psi_{\epsilon,t}^*g_\epsilon(t)} \Psi_{\epsilon,t}^*\varphi_{x_0}+\Psi_{\epsilon,t}^*v\frac{\partial}{\partial t}\Psi_{\epsilon,t}^*\varphi_{x_0} -\Psi_{\epsilon,t}^*v\Psi_{\epsilon,t}^*\varphi_{x_0} R_{\Psi_{\epsilon,t}^*g_\epsilon(t)}\right)d\mu_{\Psi_{\epsilon,t}^*g_\epsilon(t)},
\end{align}

For the first term in the right hand side, we have
\begin{align}\label{R2}
\int_M\Psi_{\epsilon,t}^*v\Delta_{\Psi_{\epsilon,t}^*g_\epsilon(t)} \Psi_{\epsilon,t}^*\varphi_{x_0} d\mu_{\Psi_{\epsilon,t}^*g_\epsilon(t)}
=& \int_{B^h_1(x_0)}v\Delta_{g_\epsilon(t)} \varphi_{x_0} d\mu_{g_\epsilon(t)}\notag\\
\le& \frac{C(n,h,A,p)}{t^{n/2p}}\int_{B^h_1(x_0)}v d\mu_{g_\epsilon(t)}\notag\\
\le& \frac{C(n,h,A,p)}{t^{n/2p}}\int_{B^h_{C_2(n,h,A,p)}(\Psi_{\epsilon,t}^{-1}(x_0))}\Psi_{\epsilon,t}^*v d\mu_{\Psi_{\epsilon,t}^*g_\epsilon(t)}
\end{align}

For the second term  in the right hand side, by definition we have $\frac{\partial}{\partial t}\Psi_{\epsilon,t}^*\varphi_{x_0}(y)=W_\epsilon(t)|_{\Psi_{\epsilon,t}(y)}(\varphi_{x_0}(\Psi_{\epsilon,t}(y))$, where $W_\epsilon(t)$ is the family of vector fields in the definition of $h$-flow of initial metric $\hat g_\epsilon$. By Lemma \ref{thm6.2} we have $|W_\epsilon(t)|\le C(n,h,A,p)/t^{n/2p}$. Recall $|\tilde\nabla \varphi_x|\le C(n,h) $, thus we have
\begin{align}\label{R3}
\int_M \Psi_{\epsilon,t}^*v\frac{\partial}{\partial t}\Psi_{\epsilon,t}^*\varphi_{x_0}  d\mu_{\Psi_{\epsilon,t}^*g_\epsilon(t)}\le \frac{C(n,h,A,p)}{t^{n/2p}}\int_{B^h_{C_2(n,h,A,p)}(\Psi_{\epsilon,t}^{-1}(x_0))}\Psi_{\epsilon,t}^*v d\mu_{\Psi_{\epsilon,t}^*g_\epsilon(t)}.
\end{align}

For the third term  in the right hand side, by Lemma \ref{thm6.2} we have $|R_{\Psi_{\epsilon,t}^*g_\epsilon(t)}|=|R_{g_\epsilon(t)}\circ\Psi_{\epsilon,t}|\le C(n,h,A,p)/t^{\frac{n}{4p}+\frac{3}{4}}$. Thus we have
\begin{align}\label{R4}
|\int_M -\Psi_{\epsilon,t}^*v\Psi_{\epsilon,t}^*\varphi_{x_0} R_{\Psi_{\epsilon,t}^*g_\epsilon(t)}d\mu_{\Psi_{\epsilon,t}^*g_\epsilon(t)}|\le \frac{C(n,h,A,p)}{t^{\frac{n}{4p}+\frac{3}{4}}}\int_{B^h_{C_2(n,h,A,p)}(\Psi_{\epsilon,t}^{-1}(x_0))}\Psi_{\epsilon,t}^*v d\mu_{\Psi_{\epsilon,t}^*g_\epsilon(t)}.,
\end{align}

Combine \eqref{R1}, \eqref{R2}, \eqref{R3} and \eqref{R1}, we have

\begin{align}\label{R5}
\phi'_-(t)&\le \frac{C(n,h,A,p)}{t^q}\int_{B^h_{C_2(n,h,A,p)}(\Psi_{\epsilon,t}^{-1}(x_0))}\Psi_{\epsilon,t}^*v d\mu_{\Psi_{\epsilon,t}^*g_\epsilon(t)},
\end{align}
where $q={\text{max}}\{\frac{n}{2p},\frac{n}{4p}+\frac{3}{4}\}<1$.

We can choose $N$ balls $B^h_{C_3}(x_i),i=1,...,N$ such that  $B^h_{C_2(n,h,A,p)}(\Psi_{\epsilon,t}^{-1}(x_0))\subset \cup_{i=1}^N B^h_{C_3}(x_i)$, where $N$ only depends on $n$ and $h$. Recall that $\Psi_{\epsilon,t}^*\varphi_{\Psi_{\epsilon,t}(x_i)}=1$ on $B^h_{C_3}(x_i)$, thus we have
\begin{align}
\int_{B^h_{C_3}(x_i)}\Psi_{\epsilon,t}^*v d\mu_{\Psi_{\epsilon,t}^*g_\epsilon(t)}\le\int_M\Psi_{\epsilon,t}^*v\Psi_{\epsilon,t}^*\varphi_{\Psi_{\epsilon,t}(x_i)} d\mu_{\Psi_{\epsilon,t}^*g_\epsilon(t)}\le  \phi(t),
\end{align}

thus by \eqref{R5} we have
\begin{align}\label{ieqz}
\phi'_-(t)\leq \frac{C(n,h,A,p)}{t^q}\phi(t).
\end{align}

Denote \[\mathcal{T}=\{t_1\big|\forall\tau\in(t_1,t], \text{the supremum in the definition of $\phi(\tau)$ is attained in} \text{ }K_{\epsilon,\tau}\},\]and denote $t_0=\inf \mathcal{T}$. Then  \eqref{ieqz} holds for any $\tau\in(t_0,t]$.

Since $\phi$ is Lipschitz, $\phi'_-(\tau)=\phi'(\tau)$ almost everywhere.
we can multiple $e^{-\int_{t_0}^\tau\frac{C(n,h,A,p)}{s^q}ds}$ to both sides of \eqref{ieqz}, and get
\[[e^{-\int_{t_0}^\tau\frac{C(n,h,A,p)}{s^q}ds}\phi(\tau)]'\leq 0,\text{almost everywhere on $[{t_0},t]$}.\]

Integrate it on $[t_0,t]$, since $q=q(n,p)<1$ we get
\[\phi(t)\leq C(n,h,A,p)\phi(t_0)\]

Note that by the definition of $t_0$, if $t_0\ne0$, then we have $\phi(t_0)\leq 2\delta$, then $\phi(t)\leq C(n,h,A,p) (2\delta)$ and the claim holds. Otherwise, if $t_0=0$, then $\phi(t)\leq C(n,h,A,p)\phi(0)$ and the claim holds too.

Thus the claim holds for any case, and then we have
\begin{align}
\int_{B^h_{1/2}(x)}vd\mu_{g_\epsilon(t)}\le \phi(t)\le C(n,h,A,p)\sup_{x\in M}\int_{B^h_{1/2}(x)}v(0)d\mu_{\hat g_\epsilon}+C(n,h,A,p)\delta.
\end{align}

Note that $v\le (R_{g_\epsilon(t)})_-+\delta$, thus we have
\begin{align}
\int_{B^h_{1/2}(x)}vd\mu_{g_\epsilon(t)} \le C(n,h,A,p) \int_M (R_{\hat g_\epsilon})_-d\mu_{\hat g_\epsilon}+C(n,h,A,p)\delta.
\end{align}
Since for any fixed $(x,t)\in M\times[0,T]$, $\int_{B^h_{1/2}(x)}vd\mu_{g_\epsilon(t)}$ is continuous for $\delta$, we have $\phi(t)$ is lower semi-continuous for $\delta$. Thus
by letting $\delta\to 0^+$, we have
\begin{align}\label{tl}
\sup_{x\in M}\int_{B^h_{1/2}(x)}vd\mu_{g_\epsilon(t)} 
\le C(n,h,A,p)\int_M  \left|(R_{\hat g_\epsilon})_-\right|d\mu_{\hat g_\epsilon},\forall t\in(0,T],\forall x\in M
\end{align}

By Lemma \ref{thm3.1},
 \[\lowlim_{\epsilon\to0^+}\int_M (R_{\hat g_\epsilon})_-d\mu_h= 0,\]

 Thus taking limit in \eqref{tl}, we have
 \begin{align}\label{lmtR}
\lowlim_{\epsilon\to0^+}\int_{B^h_{1/2}(x)} \left|(R_{g_\epsilon(t)})_-\right|d\mu_{g_\epsilon(t)}=0,\forall t\in(0,T],\forall x\in M,
\end{align}
which confirms the lemma.
\end{proof}

Since for any fixed $t\in(0,T]$, $g_\epsilon(t)$ converges to $g(t)$ in $C^\infty$ norm, taking limit in \eqref{lmtR}, we have Lemma \ref{NNSC} holds.

\subsection{Proof of Theorem \ref{PMT}}
In this subsection we prove Theorem \ref{PMT}, we restate it here with a minor difference in the notation:

\begin{theorem}
Let $M^n(n\ge3)$ be a smooth manifold with a metric $\hat g\in C^\infty(M\setminus S)$, where $S$ is a subset of $M$ and $S=\cup_{k=0}^{k_0} S^k$, each $S^k(0\le k\le {k_0})$ is compact and $S^k\cap S^l=\emptyset$ for $k\neq l$, such that 
\begin{enumerate}
\item $S^0$ is a hypersurface which bounds a domain $\Omega$, $\hat g$ is Lipschitz in a neighborhood of $S^0$, and the mean curvature of $S^0$ in $(\bar\Omega,\hat g|_\Omega)$ is not less than  it in $(M\setminus \Omega,\hat g|_{M\setminus \Omega})$.

\item For each $1\le \iota\le m$,  $\hat g$ is $W^{1,p_\iota}$ in a neighborhood of $S^\iota$ for some $p_\iota \in (n,+\infty]$, and the $(n-1-n/{p_\iota})$-dimensional lower Minkowski content of $S^\iota$ is zero.
\end{enumerate}

Suppose $\hat g$ is complete and asymptotically flat, then its ADM mass is nonnegative. Moreoever, if the mass is zero, then there exists a $C^{1,\alpha}$ diffeomorphism $\Phi:\mathbb{R}^n \to M$, such that $g_{\text{Euc}}=\Phi^* \hat g$, where $g_{\text{Euc}}$ is the standard Euclidean metric.
\end{theorem}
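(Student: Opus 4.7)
The plan is to package the preparatory work of Subsections 2.1--2.3 into a straightforward application of the classical smooth positive mass theorem, run on the $h$-flow at positive time. First I would apply Lemma \ref{thm3.1} to produce smooth asymptotically flat approximations $\hat g_\epsilon$ with $\hat g_\epsilon\to\hat g$ in $C^0$, uniform $W^{1,p}_{\rm loc}$ bounds in the sense of the statement, and $\liminf_{\epsilon\to0^+}\int_M (R_{\hat g_\epsilon})_-\,d\mu_h=0$. Choose the background metric $h$ to be $\hat g_{\epsilon_0}$ for some $\epsilon_0$ small, as in the remark after Lemma \ref{thm6.2}, so that $h$ is $(1+\tfrac{\epsilon(n)}{2})$-fair to every $\hat g_\epsilon$ with $\epsilon\le\epsilon_0$. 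Then Lemma \ref{thm6.2} produces an $h$-flow $g_\epsilon(t)$ on a common time interval $(0,T]$, with the three a priori estimates there that are uniform in $\epsilon$. Combining the $C^0$-convergence of the initial data with the uniform interior $C^{\infty}_{\rm loc}$ bounds for $t\in[a,b]\subset(0,T]$ (obtained by iterating the estimates in Lemma \ref{thm6.2}, exactly as in the proof of that lemma), a diagonal extraction via Arzel\`a--Ascoli yields an $h$-flow $g(t)$ of initial data $\hat g$ that inherits all three estimates.

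Next I would invoke Lemma \ref{ST1} to conclude $m(\hat g)=m(\hat g_\epsilon)=m(g_\epsilon(t))$, and Lemma \ref{NNSC} to conclude that $g(t)$ has nonnegative scalar curvature for every $t\in(0,T]$. For each fixed $t>0$, $g(t)$ is a smooth complete asymptotically flat metric with $R_{g(t)}\ge 0$; the classical smooth positive mass theorem of Schoen--Yau/Witten (valid in the dimensional range $n\ge 3$ that we need, and in any dimension in the asymptotically flat smooth case) applies to give $m(g(t))\ge 0$. Passing $t\to 0^+$ in the equality $m(g(t))=m(g_\epsilon(t))=m(\hat g)$ gives $m(\hat g)\ge 0$, which is the inequality part.

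For the rigidity part, assume $m(\hat g)=0$. Then $m(g(t))=0$ for all $t\in(0,T]$, and the classical smooth rigidity of the positive mass theorem forces each $g(t)$ to be isometric to $(\mathbb R^n,g_{\rm Euc})$. In particular, for each such $t$, the $h$-flow equation reduces to a gauge-deformation of the flat Euclidean metric. Let $\Psi_t$ be the DeTurck diffeomorphisms of Remark \ref{rmk1}, so that $\Psi_t^*g(t)$ is a Ricci flow; since $\Psi_t^*g(t)$ is flat for each $t>0$ and Ricci flow starting from flat data is stationary, $\Psi_t^*g(t)\equiv g_{\rm Euc}$ (under the isometric identification of $(M,g(t))$ with $\mathbb R^n$). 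Composing the isometry at time $t$ with $\Psi_t$, we obtain a family of diffeomorphisms $\Phi_t:\mathbb R^n\to M$ with $\Phi_t^*g(t)=g_{\rm Euc}$. The uniform $W^{1,p}_{\rm loc}$ control with $p>n$ provided by Lemma \ref{thm6.2}(1), combined with Morrey's embedding, gives a uniform $C^{0,\alpha}_{\rm loc}$ bound on $g(t)$ with $\alpha=1-n/p$, and hence, by integrating the ODE defining $\Psi_t$ together with the bound on $W$ that comes from estimate (2) in Lemma \ref{thm6.2}, a uniform $C^{1,\alpha}_{\rm loc}$ bound on $\Phi_t$. Extracting a $C^{1,\alpha'}_{\rm loc}$-convergent subsequence as $t\to 0^+$ (for any $\alpha'<\alpha$) produces a limiting map $\Phi:\mathbb R^n\to M$ with $\Phi^*\hat g=g_{\rm Euc}$; the same regularity argument shows $\Phi$ is a $C^{1,\alpha}$ diffeomorphism.

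The main obstacle, beyond bookkeeping, will be the rigidity step. The delicate point is legitimizing the $t\to 0^+$ limit of the DeTurck/isometry diffeomorphisms as a $C^{1,\alpha}$ diffeomorphism rather than merely a H\"older homeomorphism; this is exactly what the sub-critical integrability $p>n$ in the hypothesis buys us via Morrey, and it is the reason the singular dimension condition is phrased through $p_\iota>n$. All other inputs (mass conservation, nonnegativity of scalar curvature along flow, smoothness of $g(t)$ for $t>0$) are already packaged into Lemmas \ref{thm3.1}, \ref{thm6.2}, \ref{ST1}, and \ref{NNSC}.
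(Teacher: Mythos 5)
Your inequality part follows the paper's route, but you misstate the crucial mass comparison. You write ``passing $t\to 0^+$ in the equality $m(g(t))=m(g_\epsilon(t))=m(\hat g)$,'' but $g(t)$ and $g_\epsilon(t)$ are \emph{different} flows (started from $\hat g$ and $\hat g_\epsilon$ respectively), and their masses are not a priori equal. The actual chain in the paper is: $m(\hat g)=m(\hat g_\epsilon)=m(g_\epsilon(t))$ by Lemma \ref{ST1}, and then the inequality $\lowlim_{\epsilon\to 0^+}m(g_\epsilon(t))\ge m(g(t))$ coming from McFeron--Sz\'ekelyhidi's lower semicontinuity of mass (\cite[Theorem 14]{McSz}) for the fixed-time $\epsilon\to 0$ limit. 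That one-sided inequality is exactly what is needed ($m(\hat g)\ge m(g(t))\ge 0$, no $t\to 0^+$ limit required); asserting equality is not justified, and omitting the McFeron--Sz\'ekelyhidi step leaves a real hole.

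Your rigidity argument is genuinely different from the paper's, and it has a gap. The paper argues at the metric-space level: $(M,g(t),p)\to (M,\hat g,p)$ in pointed Gromov--Hausdorff sense, so $(M,\hat g)$ is isometric to $(\mathbb{R}^n,g_{\rm Euc})$ \emph{as a metric space}, and then Taylor's theorem \cite[Theorem 2.1]{Ta06} on regularity of distance-preserving maps between spaces with $C^{\alpha}$ metrics upgrades the metric-space isometry to a $C^{1,\alpha}$ Riemannian isometry, with $\alpha$ coming from the Morrey embedding $W^{1,p}_{\rm loc}\hookrightarrow C^{0,\alpha}_{\rm loc}$, $\alpha=1-n/p$. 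Your alternative is to build $\Phi_t$ explicitly via the DeTurck diffeomorphisms and the time-$t$ Euclidean isometry and then pass $t\to 0^+$. The obstruction is precisely at the quantitative step you gloss over: integrating the DeTurck ODE with the bounds of Lemma \ref{thm6.2}(2) only controls $\Psi_t$ in $C^0$; even using (3) one only gets $\int_0^T|\tilde\nabla W|\,dt<\infty$, hence $C^1$ control of $\Psi_t$ (and a $C^1$, not smooth, limiting diffeomorphism — note that $\int_0^T|\tilde\nabla^2 W|\,dt$ \emph{diverges}, since Simon's bound $|\tilde\nabla^3 g|\lesssim t^{-3/2}$ is not integrable). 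To get uniform $C^{1,\alpha}$ bounds on $\Phi_t$ one must interpolate the H\"older seminorm of $\tilde\nabla W$ between (3) and Simon's third-derivative bound, which only yields a \emph{strictly smaller} H\"older exponent than $1-n/p$, and one still has to show the $t\to 0^+$ limit is a diffeomorphism. In short, your route requires a quantitative version of the isometry-regularity argument that is neither in the lemmas you cite nor in your write-up, whereas the paper's GH-plus-Taylor route sidesteps all of this by applying the regularity theorem once, at the limit.
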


\begin{proof}
Let $\hat g_\epsilon$ be the mollification in Lemma \ref{thm3.1}, let $h$ be some $\hat g_{\epsilon_0}$ such that $h$ is $(1+\frac{\epsilon(n)}{2})$-fair to $\hat g$ and $\hat g_\epsilon$ for $\epsilon$ small enough, where $\epsilon(n)$ is the constant in Lemma \ref{thm6.2}, and let $g_\epsilon(t)$ and $g(t)$ be the $h$-flow of initial metric $\hat g_\epsilon$ and $\hat g$ in Lemma \ref{thm6.2} respectively.

By Lemma \ref{ST1}, we have $g_\epsilon(t)$ is also asymptotically flat and
\begin{align}\label{m1}
m(\hat g)=m(\hat g_\epsilon)=m(g_\epsilon(t),
\end{align}
 where $m(\hat g), m(\hat g_\epsilon)$ and $m(g_\epsilon(t))$ are the ADM mass of $\hat g$, $\hat g_\epsilon$ and $g_\epsilon(t)$.

 As the same as in \cite{ShiTam}, Lemma \ref{ST1} actually says for any fixed $t\in(0,T]$, $g_\epsilon(t)$ satisfies the condition of \cite[Theorem 14]{McSz}, and by \cite[Theorem 14]{McSz} we have $g(t)$ is also asymptotically flat and
 \begin{align}\label{m2}
\lowlim_{\epsilon\to 0^+}m(g_\epsilon(t)\ge m(g(t)).
\end{align}

By Lemma \ref{NNSC}, we have $g(t)$ has nonnegative scalar curvature, thus by the classical positive mass theorem, we have 
\begin{align}\label{m3}
m(g(t))\ge 0.
\end{align}

Combine \eqref{m1}, \eqref{m2} and \eqref{m3}, we have
\begin{align}
m(\hat g)\ge 0.
\end{align}

Moreover, assume $m(\hat g)=0$, then Combining \eqref{m1}, \eqref{m2} and \eqref{m3} we have 
\begin{align}
m(g(t))= 0,\forall t\in(0,T].
\end{align}

By the classical positive mass theorem, $(M,g(t))$ are the Euclidean space. Since $g(t)$ converges to $\hat g$ in $C^0$ norm, and thus  $(M,g(t),p)$ converges to $(M,\hat g,p)$ in the sense of pointed Gromov-Hausdorff convergence for any $p\in M$. Thus $(M,\hat g)$ is Euclidean as a metric space. 

Thus we have a bijection
\begin{align}
\Phi:M\to \mathbb{R}^n,
\end{align}

Up to now, the standard Gromov-Hausdorff convergence theory only tells that $\Phi$ is an isometry for metric spaces. 

However, take any coordinate $(U,\varphi)$ on $M$ such that $U\subset\subset M$ and denote $\mathcal{O}=\varphi(U)$. Since $\hat g\in W_{\text{loc}}^{1,p}$ for $p>n$, by Morrey embedding we have $(\varphi^{-1})^*\hat g$ is $C^\alpha$ on $\mathcal{O}$ for some $\alpha\in(0,1)$. 

Note that
\begin{align}
\Phi\circ \varphi^{-1}:\varphi(U)\to\Phi(U)
\end{align} 
is a distance-preserving homeomorphism from $(\varphi(U),(\varphi^{-1})^*\hat g)$ to $(\Phi(U),g_{\text{Euc}})$, where $(\varphi^{-1})^*\hat g$ is $C^\alpha$ on $\mathcal{O}$, $g_{\text{Euc}}$ is the standard Euclidean metric and it is smooth on $\Phi(U)$. 

Therefore, by \cite[Theorem 2.1]{Ta06}, we have $\Phi\circ \varphi^{-1}$ is $C^{1,\alpha}$ and $(\Phi\circ \varphi^{-1})^* g_{\text{Euc}}=(\varphi^{-1})^*\hat g$. Thus $\Phi$ is $C^{1,\alpha}$, and $\Phi^*g_{\text{Euc}}=\hat g$. Similarly $(\Phi)^{-1}$ is also $C^{1,\alpha}$, which completes the proof of the theorem.
\end{proof}

\section{The inequality part}

The inequality part of the Penrose inequality admitting various kinds of singular sets has been essentially proved in  \cite[Proposition 3.1]{StMi} (see also \cite{Mi}). Though they only presented a result for metrics with corners along a hypersurface, their proof also works in our case.

 In fact, in \cite{Mi} and  \cite[Proposition 3.1]{StMi}, their argument based on an approximation developed in \cite{Miao2002}. However, if we replace it by the approximation in our Section 2, then the argument in \cite{Mi} and  \cite[Proposition 3.1]{StMi} would give the inequality part of Theorem \ref{mthm}.

For readers' convenience, we give the proof here.
\begin{proof}[Proof of the inequality part of Theorem \ref{mthm}]
As the same as the proof of \cite[Proposition 3.1]{StMi}, reflect $(M,g)$ to  across its boundary $\Sigma_H$ to get an asymptotically flat manifold $(\bar M,\bar g)$, which has two ends and has a corner along $\Sigma_H$. Let $M'$ denote  the image of the reflection of $M$ in $\bar M$ and let $S'$ denote the image of the reflection of $S$ into $\bar M$. Then $\bar g$ is smooth on $\bar M\setminus\left(S \cup S'\cup \Sigma_H\right)$

 Since $\Sigma_H$ is minimal, $S\cup S'$ has $n-1$-dimensional lower Minkowski content zero, and $\Sigma_H\cap \left(S\cup S'\right)=\emptyset$,  using Lemma \ref{thm6.2} we can construct a $h$-flow $g(t)$ of initial metric $\bar g$, such that 
 \begin{enumerate}
\item $g(t)$ has nonnegative scalar curvature (by Lemma \ref{NNSC}),
\item $g(t)$ converges to $\bar g$ in $C^0$ norm  as $t\to 0^+$ (by Lemma \ref{thm6.2}),
\item $g(t)$ is asymptotically flat with $m(g(t))\le m(g)$ (by Lemma \ref{ST1} and \cite[Theorem 14]{McSz}).
 \end{enumerate}

 Moreover, since $(\bar M,\bar g)$ is isometry under the reflection, we can take $h$ to preserve this reflection symmetry, and thus the flow $g(t)$ also preserves this reflection symmetry. Since the reflection is taken with respect to $\Sigma_H$, we have that $\Sigma_H$ is totally geodesic in $(\bar M,g(t))$, and thus minimal in it.

As the same as the proof of \cite[Proposition 3.1]{StMi}, now we turn our attention to the restriction of $g(t)$ on $M$. Let $\mathcal{S}$ be the set of all closed hypersurface in $M$ which enclose $\Sigma_H$. then since $3\le n\le 7$ and $\Sigma_H$ is minimal in $(\bar M,g(t))$, there is an hypersurface $\Sigma_t$ in $\mathcal{S}$ such that the area of $\Sigma_t$ attains the infimum of the area of all the hypersurface in $\mathcal{S}$.  Then $\Sigma_t$ is minimal in $(\bar M,g(t))$, and by the classical (Riemannian) Penrose inquality, we have
\begin{align}\label{ine1}
m(g)\ge m(g(t))\ge \frac{1}{2}\left(\frac{|\Sigma_t|_{g(t)}}{\omega_{n-1}}\right)^\frac{n-2}{n-1},
\end{align}
where $|\cdot|_{g(t)}$ is the area taken with respect to $g(t)$.

On the one hand, by definition we have $|\Sigma_t|_{g(t)}\le |\Sigma_H|_{g(t)}$, thus we have
\begin{align}\label{ine2}
\uplim_{t\to 0^+}|\Sigma_t|_{g(t)}\le \lim_{t\to 0^+}|\Sigma_H|_{g(t)}=|\Sigma_H|_g,
\end{align}
where $|\cdot|_{g}$ is the area taken with respect to $g$.

On the other hand, by \eqref{ine2} we have $|\Sigma_t|_{g(t)}\le C(g)$, where $C$ is a positive constant which is independent of $t$. Since $g(t)$ converges to $\bar g$ in $C^0$ norm  as $t\to 0^+$, by dominant convergence theorem we have
\begin{align}
\lim_{t\to 0^+}\left(|\Sigma_t|_{g(t)}-|\Sigma_t|_g\right)=0,
\end{align}
which tells that 
\begin{align}\label{ine3'}
\lowlim_{t\to 0^+}|\Sigma_t|_{g(t)}=\lowlim_{t\to 0^+}|\Sigma_t|_g.
\end{align}

Since $\Sigma_H$ is outer-minimizing in $(M,g)$, we have $|\Sigma_t|_g\ge |\Sigma_H|_g$, and \eqref{ine3'} becomes

\begin{align}\label{ine3}
\lowlim_{t\to 0^+}|\Sigma_t|_{g(t)}\ge|\Sigma_H|_g. 
\end{align}

Thus by \eqref{ine2} and \eqref{ine3}, we have
\begin{align}\label{ine0}
\lim_{t\to 0^+}|\Sigma_t|_{g(t)}\ge|\Sigma_H|_g. 
\end{align}

Thus taking limit in \eqref{ine1}, we have
\begin{align}
m(g)\ge \frac{1}{2}\left(\frac{|\Sigma_H|_{g}}{\omega_{n-1}}\right)^\frac{n-2}{n-1},
\end{align}
which proves the inequality part of Theorem \ref{mthm}.
\end{proof}

\section{The rigidity part}

Similar with \cite{LuMi}, the key for the proof of the main theorem is to study the rigidity of Bray's mass-capacity inequality, \cite[Theorem 9]{Br} for our singular metrics. Once this rigidity is proved, the remaining of the proof of our main theorem has no difference with the proof in \cite{LuMi}. Before we claim the mass-capacity inequality, we introduce the concept of the capacity of a hypersuface in an asymptotically flat manifold.

\begin{definition}[capacity]
Let $(M,g)$ be as the assumption of Theorem \ref{mthm} and let $\Sigma_H$ denote its nonempty bundary. 
 As the same as \cite{LuMi}, since $g$ is Lipschitz, there exists a function $\varphi$ satisfying the equation:
\begin{align*}\left\{
\begin{array}{rl}
\Delta_g \varphi&=0, \quad\  {\text{in}}\ M,\\
\varphi&=0, \quad\  {\text{on}}\ \Sigma_H,\\
\varphi(x)&\to 1,\quad {\text{as}}\ x\to \infty.
\end{array} \right.
\end{align*} 

Since $g$ is asymptotically flat, it is well known that $\varphi$ has such an asymptotic expansion:
\begin{align}
\varphi(x)=1-\frac{\mathcal{E}(g)}{2|x|^{2-n}}+o(|x|^{2-n}),\quad {\text{as}}\ x\to \infty,
\end{align}
where $\mathcal{E}(g)$ is a positive constant which is usually known as the capacity of $\Sigma_H$ in $(M,g)$.
\end{definition}

Standard elliptic theory tells that $\varphi$ is smooth on $M\setminus S$ ($\varphi$ is smooth up to $\Sigma_H$), and $\varphi\in W_{\text{loc}}^{2,p}(M)$, for any $p>n$. Thus we have $\varphi\in C_{\text{loc}}^{1,\alpha}(M)$, for any $\alpha\in (0,1)$. 

Now we can state the Mass-capacity inequality:
\begin{lemma}[Mass-capacity inequality for metrics with small singular sets]\label{MCInq}
Let $M^n$ be a smooth manifold and $g$ be a Lipschitz metric on $M$ which is smooth away from a compact singular set $S\subset M\setminus \Sigma_H$. Suppose $g$ is asymptotically flat and has nonnegative scalar curvature away from $S$, $M$ has a nonempty boundary $\Sigma_H$ with zero mean curvature. If $S$ is of vanishing $(n-1)$-dimensional lower Minkowski content.  Then 
\begin{align}
m(g)\ge \frac{\mathcal{E}(g)}{2}.
\end{align}

Moreover, if $m(g)= \frac{\mathcal{E}(g)}{2}$, then there exists a $C^{1,\alpha}$ diffeomorphism $\Phi:\mathbb{M}_m \to M$, such that $g_m=\Phi^* g$, where $(\mathbb{M}_m,g_m)$ is the standard spatial Schwarzschild manifold with mass $m$.
\end{lemma}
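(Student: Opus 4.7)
The plan is to adapt Bray's mass--capacity argument to the singular setting, using our Theorem \ref{PMT} in place of the classical positive mass theorem. First I would double $(M,g)$ across the minimal boundary $\Sigma_H$ to obtain a two-ended asymptotically flat manifold $(\bar M,\bar g)$, in which $\bar g$ is Lipschitz with zero mean-curvature jump across $\Sigma_H$ (matching condition (1) of Theorem \ref{PMT}), and whose small singular set $S\cup S'$ (where $S'$ is the reflected copy of $S$) still has vanishing $(n{-}1)$-Minkowski content. I would then solve the weak harmonic problem $\Delta_{\bar g}\bar\varphi=0$ with $\bar\varphi\to\pm 1$ at the two ends; Lipschitz elliptic theory gives $\bar\varphi\in C^{1,\alpha}_{\mathrm{loc}}$, the reflection symmetry forces $\bar\varphi$ to be antisymmetric with $\bar\varphi|_{\Sigma_H}=0$ and $\bar\varphi|_M=\varphi$, while the strong maximum principle gives $-1<\bar\varphi<1$ on the interior. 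Setting $u=(\bar\varphi+1)/2$ and $\tilde g=u^{4/(n-2)}\bar g$, one has $\Delta_{\bar g}u=0$, so $R_{\tilde g}=u^{-4/(n-2)}R_{\bar g}\ge 0$ away from singular sets; moreover the decay $u\sim\mathcal{E}(g)/(4|x|^{n-2})$ at the mirror end combined with the standard inversion $y=x/|x|^2$ shows that the mirror end of $\bar M$ compactifies to a single smooth point $p_\infty$ in the resulting one-ended AF manifold $\tilde M$.

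Next I would verify the hypotheses of Theorem \ref{PMT} for $(\tilde M,\tilde g)$. The singular set is $\Sigma_H\cup S\cup S'$; near $S\cup S'$, $\tilde g$ remains Lipschitz and the Minkowski-content condition is inherited. The only delicate check is the mean-curvature condition at $\Sigma_H$: a short computation using the conformal transformation of mean curvature, together with the antisymmetry of $\partial_\nu u$ across $\Sigma_H$, yields $\tilde H^+=-\tilde H^-$, so choosing $\Omega$ to be the side on which this quantity is nonnegative satisfies hypothesis (1). A standard ADM-mass computation from the expansion $u=1-\mathcal{E}(g)/(4|x|^{n-2})+o(|x|^{2-n})$ at the original end gives $m(\tilde g)=m(g)-\mathcal{E}(g)/2$. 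Applying Theorem \ref{PMT} to $(\tilde M,\tilde g)$ yields $m(\tilde g)\ge 0$, proving the inequality $m(g)\ge\mathcal{E}(g)/2$.

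For the rigidity half, suppose $m(g)=\mathcal{E}(g)/2$, so $m(\tilde g)=0$. The rigidity part of Theorem \ref{PMT} produces a $C^{1,\alpha}$ diffeomorphism $\Psi:\mathbb{R}^n\to\tilde M$ with $\Psi^*\tilde g=g_{\mathrm{Euc}}$. Writing $w=\Psi^*u$ gives $\Psi^*\bar g=w^{-4/(n-2)}g_{\mathrm{Euc}}$. Pulling back $\Delta_{\bar g}\bar\varphi=0$ through the conformal Laplacian formula (using also $R_{\bar g}=0$, which follows from $R_{\tilde g}=0$ and the conformal relation) yields the Euclidean identity $\Delta_{\mathrm{Euc}}(1/w)=0$ on $\mathbb{R}^n\setminus(\{0\}\cup\Psi^{-1}(\Sigma_H\cup S\cup S'))$, where $\Psi(0)=p_\infty$. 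Since $1/w$ is locally $C^{1,\alpha}$ and the singular loci consist of either a Lipschitz hypersurface across which $1/w$ is $C^{1,\alpha}$ or sets with vanishing $(n{-}1)$-Minkowski content, removable-singularity arguments upgrade $1/w$ to a Euclidean harmonic function on all of $\mathbb{R}^n\setminus\{0\}$. Matching the asymptotics at infinity (leading coefficient $\mathcal{E}(g)/4$ on $|x|^{2-n}$) with those near the origin (from the mirror-end expansion of $u$ carried through the inversion, again giving a leading coefficient $\mathcal{E}(g)/4$), higher multipole moments are excluded because any such term would dominate $|x|^{2-n}$ as $x\to 0$. Thus $1/w(x)=1+\mathcal{E}(g)/(4|x|^{n-2})$, so $\Psi^*\bar g=(1+\mathcal{E}(g)/(4|x|^{n-2}))^{4/(n-2)}g_{\mathrm{Euc}}$ is the standard Schwarzschild metric $g_m$ with $\mathcal{E}(g)/4=m/2$. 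Restricting $\Psi$ to $\Phi:=\Psi|_{\Psi^{-1}(M)}$, which identifies $M$ with the exterior region $\{|x|\ge(m/2)^{1/(n-2)}\}$ of the horizon, gives the desired $C^{1,\alpha}$ diffeomorphism $\Phi:\mathbb{M}_m\to M$ with $\Phi^*g=g_m$.

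The main technical obstacle is the rigidity step: ensuring $\tilde g$ extends smoothly enough across $p_\infty$ for Theorem \ref{PMT} to apply there, verifying that the mean-curvature sign condition is compatible with the choice of $\Omega$ at $\Sigma_H$, and -- most delicately -- establishing the Euclidean-harmonic extension of $1/w$ across $\Psi^{-1}(\Sigma_H\cup S\cup S')$. This last point is precisely where Theorem \ref{PMT}, rather than any existing positive-mass theorem for corners alone, is crucial: the $C^{1,\alpha}$-diffeomorphism supplied by its rigidity half is just regular enough for the removable-singularity arguments and the subsequent Liouville-type uniqueness argument for $1/w$ to go through.
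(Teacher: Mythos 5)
Your proposal follows the same overall route as the paper's proof: double $(M,g)$ across $\Sigma_H$, take the odd extension $\bar\varphi$, form the conformal metric $\tilde g=\bigl(\tfrac{\bar\varphi+1}{2}\bigr)^{4/(n-2)}\bar g$, compactify the mirror end to a point, verify the hypotheses of Theorem \ref{PMT} (with the $\Sigma_H$ corner falling into condition (1) and the puncture and $S\cup S'$ into condition (2)), and apply it to get the inequality; for rigidity, apply the rigidity half of Theorem \ref{PMT} to obtain a $C^{1,\alpha}$ diffeomorphism to the punctured flat space and solve for the conformal factor.

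The one place you diverge from the paper is in how the conformal factor is solved through the low-regularity diffeomorphism. You pull back the pointwise equation $\Delta_{\bar g}\bar\varphi=0$ via the conformal-Laplacian formula and then use a removable-singularity argument across $\Psi^{-1}(\Sigma_H\cup S\cup S')$. That route can be made to work (the $C^1$/Lipschitz regularity of $1/w$ together with vanishing $(n-1)$-Minkowski content does close the gap), but it requires a weak formulation anyway, since $\Psi$ is only $C^{1,\alpha}$ and hence $\Psi^*\bar g$ is only $C^\alpha$. The paper short-circuits this: it first proves that $\bar\psi=\tfrac{2}{\bar\varphi+1}$ is \emph{globally weakly $\tilde g$-harmonic on $\bar M$} (via the identity $\int\tilde g(d\bar\psi,df)\,d\mu_{\tilde g}=-\tfrac12\int\bar g(d\bar\varphi,df)\,d\mu_{\bar g}$), and then simply transports that integral identity through the $C^{1,\alpha}$ diffeomorphism; weak harmonicity on $\mathbb{R}^n\setminus\{0\}$ is immediate, no removable-singularity step is needed, and elliptic regularity then gives smoothness. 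You flag the rigidity step as the delicate one, correctly so, and this observation is exactly what makes it clean. Finally, a minor point: by the reflection symmetry the mean curvatures of $\Sigma_H$ in $\tilde g$ from the two sides are in fact \emph{equal} when measured against a common choice of unit normal (the relevant $\partial_\nu u$ is even, not odd, across $\Sigma_H$), so condition (1) of Theorem \ref{PMT} holds with equality automatically and there is no need to "choose the side on which it is nonnegative."
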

\begin{proof}
We consider a conformal metric of the double of $(M,g)$ as in \cite{Br} and \cite{LuMi}. Concretly speaking, we reflect $(M,g)$ to  across its boundary $\Sigma_H$ to get a asymptotically flat manifold $(\bar M,\bar g)$, which has two ends and has a corner along $\Sigma_H$. Let $M'$ denote  the image of the reflection of $M$ in $\bar M$ and let $S'$ denote the image of the reflection of $S$ into $\bar M$. Then $\bar g$ is smooth on $\bar M\setminus\left(S \cup S'\cup \Sigma_H\right)$. We let $\bar \varphi$ be the odd extension of $\varphi$ to $\bar M$, ($\varphi(p)=-\varphi(p')$ if $p'$ is the reflection point of $p$). Then we consider such a conformal metric:
\begin{align}\label{transg}
\tilde g=\left(\frac{\bar \varphi+1}{2}\right)^\frac{4}{n-2}\bar g \quad {\text{on}}\ \bar M.
\end{align}

Since $(M,g)$ is smooth away from $S$, by the standard elliptic regularity theory we have $\varphi$ is smooth away from $S$. Since $S\cap \Sigma_H=\emptyset$, $\varphi$ is smooth up to $\Sigma_H$ in $M$. Samely, $\bar \varphi$ is smooth up to $\Sigma_H$ in the side of $M'$. Since $\Sigma_H$ is a minimal surface in $(M,g)$ and $\bar \varphi$ is the odd extension, we have that the mean curvature of $\Sigma_H$ in $(M,\tilde g|_M)$ is equal to it in $(M',\tilde g|_{M'})$.

By the asymptotic expansion of $\varphi$, we know that $\tilde g$ is asymptotically flat with one end. In fact, it follows by a direct calculus that the end in $M$ remains an asymptotically flat end after this conformal transformation, while its mass is changed by the formula $m(\tilde g)=m(g)-\frac{\mathcal{E}(g)}{2}$. And the end in $M'$ is conformal to a punctured ball. Moreover, if we add a point $o$ representing the infinity of $M'$, then $\tilde g$ can be $W_{\text{loc}}^{1,q}$-extended to $\bar M\cup\{o\}$ for some $q>n$. (For a proof of this one-point conformal compactification, see \cite[Lemma 6.1]{MMT18}, or \cite[Lemma 4.3]{HM20}.)

In summary, $\tilde g\in C^2\left(\left(\bar M\cup\{o\}\right)\setminus\left(S \cup S'\cup \Sigma_H\cup\{o\}\right)\right)$, $\tilde g$ is Lipschitz on a neighborhood of $S\cup S'\cup \Sigma_H$ and is $W^{1,q}$ on a neighborhood of $o$, where the $(n-1)$-dimensional lower Minkowski content of $S\cup S'$ is zero and the $(n-1-\frac{n}{q})$-dimensional lower Minkowski content of $\{o\}$ is zero. $\tilde g$ has nonnegative scalar curvature on $\bar M\setminus\left(S \cup S'\cup \Sigma_H\right))$, and satisfies the mean curvature condition on $\Sigma_H$. 

Thus by Theorem \ref{PMT}, we have $m(\tilde g)\ge 0$, thus 
\begin{align}
m(g)=m(\tilde g)+\frac{\mathcal{E}(g)}{2}\ge \frac{\mathcal{E}(g)}{2}.
\end{align}
which proves the inequality part. 

 Moreover, if $m(g)=\frac{\mathcal{E}(g)}{2}$, then $m(\tilde g)=0$ and by Theorem \ref{PMT}, there exists a $C^{1,\alpha}$ diffeomorphism $\bar \Phi:\mathbb{R}^n \to \bar M\cup\{o\}$, such that $g_{\text{Euc}}=\bar \Phi^* \tilde g$, where $g_{\text{Euc}}$ is the standard Euclidean metric.

Now we prove the following claims:
\begin{enumerate}
\item $\bar \varphi$ is weakly $\bar g$-harmonic on $\bar M$.
\item Let $\bar \psi=\frac{2}{\bar \varphi +1}$. Then $\bar \psi$ is weakly $\tilde g$-harmonic on $\bar M$.
\end{enumerate}
 For the claim (1), since $\bar g$ is Lipschitz, there exists a function $\bar \phi$ satisfying
\begin{align*}\left\{
\begin{array}{rl}
\Delta_{\bar g} \bar \phi&=0, \quad\  {\text{on}}\ \bar M,\\
\bar \phi(x)&\to 1,\quad {\text{as}}\ x\to \infty.\\
\bar \phi(x)&\to -1,\quad {\text{as}}\ x\to \infty',
\end{array} \right.
\end{align*} 
where $\infty$ and $\infty'$ is the infinity of $M$ and $M'$ respectively.

Let $\bar w(p):=\bar \phi(p)+\bar \phi(p')$, where $p'$ is the reflection of $p$. Since $\bar g$ is symmetric with respect to the reflection of $\Sigma_H$, it follows that $\bar w$ satisfies
\begin{align*}\left\{
\begin{array}{rl}
\Delta_{\bar g} \bar w&=0, \quad\  {\text{on}}\ \bar M,\\
\bar w(x)&\to 0,\quad {\text{as}}\ x\to \infty.\\
\bar w(x)&\to 0,\quad {\text{as}}\ x\to \infty',
\end{array} \right.
\end{align*} 

Using maximal principle, we know that $\bar w\equiv 0$ on $\bar M$. Thus $\bar \phi$ is an odd function on $\bar M$ with respect to the reflection of $\Sigma_H$. And for any $q\in \Sigma_H$, since $q=q'$, we have 
\[\bar \phi(q)=\frac{1}{2}(\bar \phi(q)+\bar \phi(q'))=0.\]

Therefore, let $\phi$ be the restriction of $\bar \phi$  on $M$, then $\phi$ satisfies
\begin{align*}\left\{
\begin{array}{rl}
\Delta_g \phi&=0, \quad\  {\text{in}}\ M,\\
\phi&=0, \quad\  {\text{on}}\ \Sigma_H,\\
\phi(x)&\to 1,\quad {\text{as}}\ x\to \infty.
\end{array} \right.,
\end{align*}
which is totally the same as $\varphi$. Thus $\varphi\equiv \phi$ on $M$. Since $\bar \varphi$ and $\bar \phi$ are the odd extension of $\varphi$ and $\phi$ respectively, we have $\bar \varphi\equiv \bar \phi$ on $\bar M$. Thus the claim (1) holds.

For the claim (2), since $\tilde g=(\bar \psi^{-1})^\frac{4}{n-2} \bar g$, we have $d\mu_{\tilde g}=(\bar \psi^{-1})^\frac{2n}{n-2}d\mu_{\bar g}$, and $\tilde g(\omega,\omega')=\bar \psi^\frac{4}{n-2} \bar g(\omega,\omega')$ for any differential form fields $\omega$ and $\omega'$. For any test function $f\in W_0^{1,2}(\bar M)$, 

\begin{align}\label{int11}
\int_{\bar M}\tilde g(d\bar \psi,df) d\mu_{\tilde g}
=\int_{\bar M}\bar \psi^\frac{4}{n-2} \bar g(d\bar \psi,df) (\bar \psi^{-1})^\frac{2n}{n-2}d\mu_{\bar g}.
\end{align}

Since $d\bar \psi=-\bar \psi^2d(\bar \psi)^{-1}=-\frac{1}{2}\bar \psi^2d\bar \varphi$, we have
\begin{align}\label{int12}
&\int_{\bar M}\bar \psi^\frac{4}{n-2} \bar g(d\bar \psi,df) (\bar \psi^{-1})^\frac{2n}{n-2}d\mu_{\bar g}\notag\\
=&\int_{\bar M}\bar \psi^\frac{4}{n-2}\bar  g(-\frac{1}{2}\bar \psi^2d\bar \varphi,df) (\bar \psi^{-1})^\frac{2n}{n-2}d\mu_{\bar g}\notag\\
=&-\frac{1}{2}\int_{\bar M}\bar g(d\bar \varphi,df) d\mu_{\bar g}
\end{align}

Note that $\bar \varphi$ is weakly $\bar g$-harmonic on $\bar M$ and thus $\int_{\bar M}\bar g(d\bar \varphi,df) d\mu_{\bar g}=0$ (note that Sobolev spaces $W_0^{1,2}(\bar M)$ does not depend on the choice of the  metrics $\bar g$ or $\tilde g$). By \eqref{int11} and \eqref{int12} we have
\begin{align}
\int_{\bar M}\tilde g(d\bar \psi,df) d\mu_{\tilde g}=0,\forall f\in W_0^{1,2}(M).
\end{align}
Thus we have proved the claim (2).

Now we are able to solve $\bar \psi$ (under the pull back). Since $\Phi$ is a $C^{1,\alpha}$ diffeomorphism, by the definition of integral.
\begin{align}
0=\int_{\bar M}\tilde g(d \tilde \psi,d f)d\mu_{\tilde g}=\int_{\mathbb{R}^n\setminus \{0\}}g_{\text{Euc}}\left(d(\bar \psi\circ \bar \Phi),d(f\circ \bar \Phi)\right)d\mu_{g_{\text{Euc}}}.
\end{align}

Note that when $f$ varies in $W_0^{1,2}(\bar M,\tilde g)$, $f\circ \bar \Phi$ varies in $W_0^{1,2}(\mathbb {R}^n\setminus \{0\},g_{\text{Euc}})$. (This is because the composition of a $C^{1}$ diffeomorphism does not change whether a function belongs to $W^{1,2}$). Thus we have proved that $\bar \psi \circ \bar \Phi$ is weakly $g_{\text{Euc}}$-harmonic on $\mathbb {R}^n\setminus \{0\}$. Thus $\bar \psi \circ \bar \Phi$ is smooth on $\mathbb {R}^n\setminus \{0\}$. Therefore,  as the same as \cite{Br} and \cite{LuMi}, we have
\begin{align}\label{e1}
\bar \psi\circ \bar \Phi=1+\frac{m}{2|x|^{n-2}}.
\end{align} 

Applying the push back of $\bar\Phi$ to \eqref{transg}, since $\bar \psi=\frac{2}{\bar \varphi +1}$, we have
\begin{align}\label{e2}
g_{\text{Euc}}=\bar \Phi^*\tilde g=(\bar \psi \circ \bar \Phi)^\frac{-4}{n-2}\bar \Phi^*\bar g \quad {\text{on}}\ \bar M.
\end{align}
By \eqref{e1} and \eqref{e2}, we immediately get

\begin{align}
\bar \Phi^*\bar g=\left(1+\frac{m}{2|x|^{n-2}}\right)^\frac{4}{n-2}g_{\text{Euc}}=g_m \quad {\text{on}}\ \bar M.
\end{align}
Thus by restricting $\bar \Phi$ on the region outside the horizon, we have a $C^{1,\alpha}$ diffeomorphism  $\Phi:\mathbb{M}_m \to M$, such that $g_m=\Phi^* g$, where $(\mathbb{M}_m,g_m)$ is the standard spatial Schwarzschild manifold with mass $m$, which completes the proof of the lemma.
\end{proof}

Now Theorem \ref{mthm} follows from Lemma \ref{MCInq} as the same as \cite{LuMi} and \cite{Br}:

\begin{proof}[Proof of Theorem \ref{mthm}]
The inequality part has been proved in Section 3. And for the rigidity part, once our Lemma \ref{MCInq} is given, the proof is as the same as \cite{LuMi} and \cite{Br}, thus we only sketch it for the readers' convenience.

By the proof of the rigidity of Penrose inequality in \cite{LuMi} and \cite{Br}, there exists a family of Riemannian manifolds $(M_t,g_t)$, such that $g_t$ has the same type of  singular set as $(M,g)$, $\Sigma_H^{(t)}:=\partial M_t$ is outer-minimizing in $(M_t,g_t)$, and 
\begin{align}\label{pc}
m(g_t)\ge \frac{1}{2}\left(\frac{|\Sigma_H^{(t)}|_{g_t}}{\omega_{n-1}}\right)^\frac{n-2}{n-1},
\end{align}
where $|\Sigma_H^{(t)}|_{g_t}=|\Sigma_H|_g$. 

Moreover, we also have
\begin{align}
\frac{d}{dt}|_{t=0}m(g_t)=\mathcal{E}(g)-2m(g).
\end{align}

If $m(g)>\frac{\mathcal{E}(g)}{2}$, then $\frac{d}{dt}|_{t=0}m(g_t)<0$, which is a contradiction with \eqref{pc}, since we have assumed $m(g)=\frac{1}{2}\left(\frac{|\Sigma_H|_g}{\omega_{n-1}}\right)^\frac{n-2}{n-1}$.

Thus by Lemma \ref{MCInq}, we have $m(g)=\frac{\mathcal{E}(g)}{2}$ and there exists a $C^{1,\alpha}$ diffeomorphism $\Phi:\mathbb{M}_m \to M$, such that $g_m=\Phi^* g$, where $(\mathbb{M}_m,g_m)$ is the standard spatial Schwarzschild manifold with mass $m$, which completes the proof of the theorem.

\section*{Acknowledgements}
The author would like to thank his advisors Wenshuai Jiang and Weimin Sheng for their guidance and encouragement. He also  thanks Xi Zhang and Yuguang Shi for useful suggestions.
\end{proof}

\bibliographystyle{plain}

 \end{document}